\documentclass[12pt]{amsart}
\usepackage{fullpage,tikz-cd, amsmath, amssymb, mathrsfs}
\usepackage[alphabetic]{amsrefs}
\usetikzlibrary{arrows}
\title{Platonic and alternating 2-groups}
\author{Narthana Epa}
\author{Nora Ganter}
\thanks{Ganter was supported by an Australian Research
  Fellowship and ARC Discovery Grant DP1095815.}
\date {\today}

\theoremstyle{plain}

\newtheorem{Thm}{Theorem}[section] 
\newtheorem {Lem}[Thm] {Lemma}
\newtheorem {Prop}[Thm] {Proposition}
\newtheorem {Cor}[Thm] {Corollary}

\theoremstyle {definition}
\newtheorem {Def}[Thm] {Definition}

\theoremstyle{remark} 
 \newtheorem {Rem}[Thm] {Remark}
 \newtheorem {Exa}[Thm] {Example}

\newcommand{\mC}{{\mathcal C}}
\newcommand{\mE}{{\mathcal E}}
\newcommand{\mG}{{\mathcal G}}

\newcommand{\mA}{{\mathcal A}}

\newcommand{\mU}{{\mathcal U}}

\newcommand{\bbC}{{\mathbb C}}
\newcommand{\bbH}{{\mathbb H}}
\newcommand{\bbQ}{{\mathbb Q}}
\newcommand{\bbR}{{\mathbb R}}
\newcommand{\bbS}{{\mathbb S}}
\newcommand{\bbZ}{{\mathbb Z}}

\newcommand{\mg}{\boldsymbol\mu_{|G|}}
\newcommand{\mn}{\boldsymbol\mu_n}
\newcommand{\tensor}{\otimes}

\newcommand {\tlongmap}[3] {{
$$
    \begin{tikzpicture}
      \node at (0,0) [name=A, anchor=east] {\ensuremath{#1\negmedspace
        :#2}};
      \node at (1,0) [name=B, anchor=west] {\ensuremath{#3}};
      \draw[->] (A) -- (B);
    \end{tikzpicture}
$$
  }}

\newcommand {\teqnarray}[5] {{
  \begin{center}
    \begin{tikzpicture}
      \node at (0,0) [name=A, anchor=east] {\ensuremath{#1\negmedspace
        :#2}};
      \node at (1,0) [name=B, anchor=west] {\ensuremath{#3}};

      \node at (0,-0.8) [name=C, anchor=east] {\ensuremath{#4}};
      \node at (1,-0.8) [name=D, anchor=west] {\ensuremath{#5}};

      \draw[->] (A) -- (B);
      \draw[|->] (C) -- (D);
    \end{tikzpicture}
  \end{center}
  }\noindent}

\begin{document}
\thispagestyle{empty}
\begin{abstract}
  We recall Schur's work on universal central extensions and develop
  the analogous theory for categorical extensions of groups. We prove
  that the String 2-groups are universal in this sense and study in
  detail their restrictions to the finite subgroups of the Spin
  groups. Of particular interest are subgroups of the 3-sphere
  $Spin(3)$, as well as the
  spin double covers of the alternating groups, whose categorical
  extensions turn out to be governed by the stable 3-stem $\pi_3(\bbS^0)$.
\end{abstract}
\maketitle
\section{Introduction}
By a {\em categorical group} or {\em 2-group}, we mean a small monoidal
groupoid $(\mG,\bullet,1)$ with 
weakly invertible objects. 
We will think of such a $\mG$ as a {\em categorical extension} 
\begin{center}
  \begin{tikzpicture}    
    \node at (0.5,0) [name=A] {$1/\!\!/A$};    
    \node at (2.8,0) [name=B] {$\mG$};    
    \node at (5,0) [name=C] {$G$};    
    \draw[->] (A) -- (B);
    \draw[->] (B) -- (C);
  \end{tikzpicture}
\end{center}  
where $G=\pi_0\mG$ is the group of isomorphism classes of objects in $\mG$ and
the abelian group
$$
A\,=\, \pi_1\mG\,=\,aut_\mG(1)
$$
  is the center of $\mG$.
  The purpose of this note is to study two families of finite categorical groups,
  sitting inside the Lie 2-groups $String(n)$. First, we discuss the platonic 2-groups, 
\begin{center}
  \includegraphics[scale=1.2]{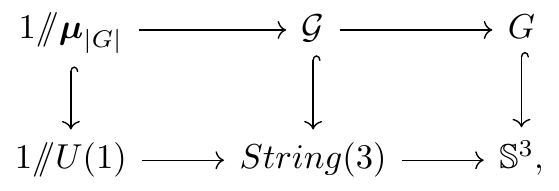}
\end{center}

which are categorical extensions of the finite subgroups of the three sphere and have as center a cyclic group of order $|G|$.
The platonic 2-groups are of interest, because the finite subgroups of
$$\bbS^3\,=\,SU(2)\,=\,Spin(3)$$ are the protagonists of the McKay
correspondence. Their list
consists of the cyclic and the binary dihedral groups,
plus the three exceptional cases: the binary
tetrahedal group $2T\cong\widetilde A_4$, the binary octahedral group
$2O\cong\widetilde S_4$, and the binary icosahedral
group $2I\cong\widetilde A_5$. The fact that there are canonical categorical 
extensions of all these groups suggests a categorical aspect of
McKay correspondence that seems worth exploring.

The second family of examples consists of the {\em alternating
  2-groups} $\mA_n$, which are related to the stable homotopy groups of spheres by the tower
\begin{center}
  \includegraphics[scale=1.2]{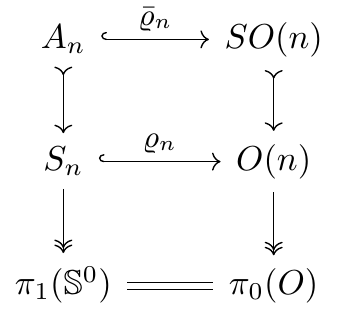}
  \quad\quad
  \includegraphics[scale=1.2]{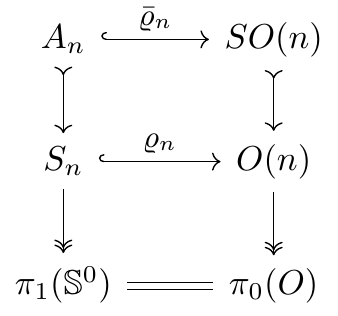}
    \quad\quad
  \includegraphics[scale=1.2]{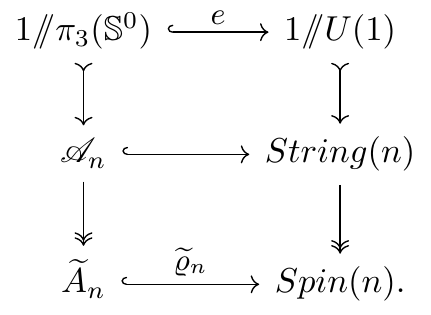}  
\end{center}

Here $$O=colim O(n)$$ is the infinite orthogonal group, and
the homotopy groups turning up are
$$
  \pi_1(\bbS^0)\,\cong\,\boldsymbol\mu_2, \quad\quad
  \pi_2(\bbS^0)\,\cong\,\boldsymbol\mu_2, \quad\quad
  \pi_3(\bbS^0)\,\cong\,\boldsymbol\mu_{24},\quad\quad\text{ and }\quad\quad
  B\pi_3(O)\,=\, U(1). 
$$
The homomorphism
$\widetilde\varrho_n$ is the permutation representation, and
$$%
  \includegraphics[scale=1.2]{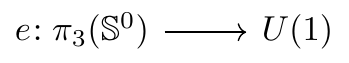}
$$ 
  is the Adams $e$-invariant.
  In the philosophy of \cite{Kapranov15}, the stable 1-stem yields the sign governing super-symmetry, while the stable 2-stem provides the sign governing categorified supersymmetry. It was Kapranov's question about a conceptual description of the stable 3-stem in this context that motivated our work.
  For $n$ sufficiently large, the alternating 2-groups turn out to be universal in an appropriate sense.
  A consequence of this is the following result.

\begin{Thm}\label{thm:24}
  The restriction of $String(n)$ to $\widetilde A_n$ has
  exact order $24$ for all $n\geq4$.
  The restriction of $String(3)$ to a finite subgroup $G\subseteq\bbS^3$ has
  exact order $|G|$.
\end{Thm}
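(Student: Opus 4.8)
The plan is to translate both statements into the computation of the order of a single integral cohomology class. Since $String(n)$ is the categorical extension of $Spin(n)$ by $1/\!\!/U(1)$ classified by the generator $\tfrac{p_1}{2}$ of $H^4(BSpin(n);\bbZ)\cong\bbZ$, its restriction to a finite subgroup $\Gamma\subseteq Spin(n)$ is classified by the pullback of $\tfrac{p_1}{2}$ along $B\Gamma\to BSpin(n)$, and the ``exact order'' of the restriction is precisely the order of this image in $H^4(B\Gamma;\bbZ)\cong H^3(\Gamma;\bbQ/\bbZ)$. So in each case I would first set up this reformulation and then compute the order.

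For the $\bbS^3$-statement, I would use that on $BSpin(3)=BSU(2)$ the generator $\tfrac{p_1}{2}$ agrees up to sign with $c_2$, so the image in $H^4(BG;\bbZ)$ is $c_2(V)$ of the defining two-dimensional representation $V$ of $G\subseteq SU(2)$. The sphere bundle of $V$ over $BG$ has total space $\bbS^3/G$, because $G$ acts freely on $S(V)=\bbS^3$; its Euler class is $c_2(V)$. Feeding $H^4(\bbS^3/G)=0$ into the Gysin sequence of $\bbS^3\to\bbS^3/G\to BG$ forces $\cup c_2(V)\colon H^0(BG)\to H^4(BG)$ to be surjective, so $c_2(V)$ generates $H^4(BG;\bbZ)$. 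Since a free action on $\bbS^3$ makes $G$ a group with periodic cohomology of period $4$, Tate periodicity gives $H^4(BG;\bbZ)\cong\widehat H^4(G;\bbZ)\cong\widehat H^0(G;\bbZ)=\bbZ/|G|$, and the generator therefore has order exactly $|G|$.

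For the alternating statement I would prove the two inequalities separately. For the lower bound I would restrict one step further, along $\widetilde A_4\hookrightarrow\widetilde A_n$, noting $\widetilde A_4\cong 2T$. The permutation representation of $A_n$ restricts on $A_4$ to the permutation representation on four of the coordinates plus a trivial summand, so $p_1$ is unchanged, and the reduced permutation representation $\widetilde\varrho_4\colon A_4\to SO(3)$ is the tetrahedral rotation representation, i.e.\ the defining inclusion $2T\subseteq\bbS^3=Spin(3)$. By the $\bbS^3$-statement the image of $\tfrac{p_1}{2}$ in $H^4(B\widetilde A_4;\bbZ)$ then has order $|2T|=24$, and naturality of restriction forces $\tfrac{p_1}{2}|_{\widetilde A_n}$ to have order at least $24$ for every $n\geq4$.

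The upper bound is the main obstacle, and is where the universality result does the work. Here I would argue that, for all $n\geq4$, the class $\tfrac{p_1}{2}|_{\widetilde A_n}$ is pulled back from the universal alternating $2$-group, whose center is the stable $3$-stem $\pi_3(\bbS^0)\cong\boldsymbol\mu_{24}$; equivalently it is the image of the generator of $\pi_3(O)\cong\bbZ$ under the stable $J$-homomorphism, detected by the Adams $e$-invariant with value $\tfrac1{24}\in\bbQ/\bbZ$. Since the image of $J$ in degree $3$ is the full $\bbZ/24$ (the Bernoulli denominator), this forces $24\cdot\tfrac{p_1}{2}|_{\widetilde A_n}=0$, so the order divides $24$. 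Combined with the lower bound, the order is exactly $24$. The delicate point throughout is this last identification with the $e$-invariant/image-of-$J$ element; once the universality statement is in hand it supplies exactly the factorization through $\boldsymbol\mu_{24}$ that is needed.
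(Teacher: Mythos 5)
Your proposal is correct and follows essentially the same route as the paper: the platonic case is the paper's Leray--Serre argument for the spherical space form $\bbS^3/G$ (your Gysin sequence for the sphere bundle over $BG$ is the same computation run on the other fibration), the lower bound for $\widetilde A_n$ comes from restricting to $\widetilde A_4\cong 2T$ exactly as in the paper's tetrahedral lemma, and the upper bound is the factorization of the class through $\pi_3(\bbS^0)\cong\boldsymbol\mu_{24}$ via the $e$-invariant, which is precisely what the paper's universality/Whitehead-tower results supply. The one step you flag as delicate --- identifying $String(n)\arrowvert_{\widetilde A_n}$ with $\mathscr A_n[e]$ --- is indeed where the paper's Lemma \ref{lem:hofib} and Corollary \ref{cor:A_n->String} do the work, and deferring to it is legitimate since the theorem is stated as a consequence of that universality.
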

It would be interesting to have a direct proof of Theorem
\ref{thm:24}, using any of the known constructions of the String
2-groups. Further, one can think of $\pi_3(\bbS^0)$
as framed bordism group, 
generated by the three sphere in its invariant
framing and use a $K3$-surface with little holes cut out as a null-bordism
of $24[\bbS^3]$, suggesting a potential connection with the
categorical groups turning up in Mathieu Moonshine.

%
\subsection{Acknowledgments}
This paper is based on the first author's Honours thesis.
It is a pleasure to thank Mikhail Kapranov for generously sharing his ideas on the subject and the
American Institute of Mathematics under whose hospitality the idea for the project was formed. The first author would like to thank Tobias Dyckerhoff for helpful input. The second author would like to thank
Mamuka Jibladze and Gerd Laures for helpful correspondence and Michael Hopkins for helpful conversations.
We would also like to thank John Baez for his feedback on an earlier draft and for suggesting the very romantic name `Platonic 2-groups'.
\section{Extensions and group cohomolgy}
\label{sec:background}
Let $G$ be a group, and write 
$$
  H_*(G)\,=\,H_*^{gp}(G,\bbZ)
$$
for the group homology of $G$ with coefficients in the trivial
$G$-module $\bbZ$. Then
$$
  H_1(G)\, \cong\, G^{ab}
$$
is the abelianization of $G$, and $H_2(G)$ is the {\em Schur
  multiplier} of $G$. We will refer to $H_3(G)$ as the {\em
  categorical Schur multiplier} of $G$.
Recall that a group $G$ is called {\em
  perfect} if its abelianization is trivial and that 
a perfect group is called 
{\em superperfect} if its Schur multiplier also vanishes. The smallest
non-trival example of a superperfect group is the binary icosahedral
group, whose categorical Schur multiplier is 
\[
  H_3(2I)\,\cong\,\boldsymbol\mu_{120},
\]
see \cite{Hausmann78}. A list of the categorical Schur multipliers of
some superperfect groups exists as HAP library.
\begin{Def}
  A central extension
  \begin{center}
    \begin{tikzpicture}
      \node at (-0.2,0) [name=A] {$A_{uni}$};
      \node at (2,0) [name=B] {$\widetilde G_{uni}$};
      \node at (4,0) [name=C] {$G$};
      \draw[->] (A) -- (B);
      \draw[->] (B) -- (C);
    \end{tikzpicture}
  \end{center}
  of finite dimensional Lie groups is called a {\em Schur cover} of
  $G$, if it is universal in the following sense: for
  any finite dimensional central extension
  \begin{center}
    \begin{tikzpicture}
      \node at (0,0) [name=A] {$A$};
      \node at (2,0) [name=B] {$\widetilde G$};
      \node at (4,0) [name=C] {$G$};
      \draw[->] (A) -- (B);
      \draw[->] (B) -- (C);
    \end{tikzpicture}
  \end{center}
  of $G$ there exists a unique map of central extensions
  \begin{center}
    \begin{tikzpicture}
      \node at (-0.2,0) [name=A] {$A_{uni}$};
      \node at (2,0) [name=B] {$\widetilde G_{uni}$};
      \node at (4,0) [name=C] {$G$};
      \draw[->] (A) -- (B);
      \draw[->] (B) -- (C);

      \node at (-0.2,-1.8) [name=A1] {$A$};
      \node at (2,-1.8) [name=B1] {$\widetilde G$};
      \node at (4,-1.8) [name=C1] {$G$.};
      \draw[->] (A1) -- (B1);
      \draw[->] (B1) -- (C1);

      \draw[->] (A) -- (A1);
      \draw[->] (B) -- (B1);
      \draw[double equal sign distance] (C) -- (C1);
    \end{tikzpicture}
  \end{center}
  
\end{Def}
If it exists, the Schur cover of $G$ is unique up to
unique isomorphism. 
We recall two classical results about
Schur covers.
\begin{Thm}[Schur 1904] 
  \label{thm:Schur_cover}
  Let $G$ be a perfect discrete group. Then $G$ possesses
  a Schur cover, whose central subgroup is the Schur multiplier 
  $$A_{uni}\,=\,H_2(G).$$  
\end{Thm}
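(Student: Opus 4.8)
The plan is to realise the Schur cover by the classical free-presentation construction and to identify its central subgroup with $H_2(G)$ via Hopf's formula. First I would choose a free presentation
$$1 \to R \to F \to G \to 1$$
with $F$ free, and set
$$\widetilde G_{uni} = [F,F]/[F,R], \qquad A_{uni} = (R \cap [F,F])/[F,R],$$
the map $\widetilde G_{uni} \to G$ being induced by $F \to G$. Since $G$ is perfect, $[F,F]$ surjects onto $[G,G]=G$, so this map is surjective with kernel exactly $A_{uni}$. Hopf's formula then identifies $A_{uni} \cong H_2(G)$, giving the asserted value of the central subgroup.

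The next step is to check that this really is a central extension and that $\widetilde G_{uni}$ is perfect. Centrality holds because for $x\in[F,F]$ and $r\in R\cap[F,F]$ one has $[x,r]\in[[F,F],R]\subseteq[F,R]$, hence $[x,r]$ is trivial in the quotient. For perfectness, the identity $G=[G,G]$ gives $F=[F,F]\,R$; writing elements of $F$ accordingly and using that the image of $R$ in $F/[F,R]$ is central, every commutator of elements of $F$ agrees modulo $[F,R]$ with a commutator of elements of $[F,F]$. As $\widetilde G_{uni}$ is generated by such commutators, it is its own commutator subgroup. This perfectness is exactly what powers the uniqueness half of the universal property.

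For universality, given any central extension $A\to\widetilde G\to G$, freeness of $F$ lets me lift $F\to G$ to a homomorphism $\psi\colon F\to\widetilde G$. Then $\psi(R)\subseteq A$, and since $A$ is central $\psi([F,R])\subseteq[\widetilde G,A]=1$; thus $\psi$ descends and restricts to a map $\widetilde G_{uni}\to\widetilde G$ of central extensions over $G$, inducing $A_{uni}\to A$ on kernels. For uniqueness, two such maps $f,g$ over $G$ differ by $d\colon x\mapsto f(x)g(x)^{-1}\in A$, which is a homomorphism $\widetilde G_{uni}\to A$ because $A$ is central; as $\widetilde G_{uni}$ is perfect and $A$ abelian, $d$ is trivial, so $f=g$.

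Finally I would address the Lie-group formulation of the definition. Because $G$ and $H_2(G)$ are discrete, $\widetilde G_{uni}$ is a discrete, hence $0$-dimensional, Lie group, so every abstract homomorphism out of it is automatically continuous; the universal property just established in the category of abstract groups therefore upgrades verbatim to the category of finite-dimensional Lie groups. The one genuinely nonformal input is Hopf's formula, i.e. the identification $(R\cap[F,F])/[F,R]\cong H_2(G)$ together with its independence of the chosen presentation; the perfectness and lifting arguments are otherwise purely formal. I expect organising this homological identification cleanly to be the main obstacle, the remainder being manipulation of the free presentation.
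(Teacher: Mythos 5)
Your proof is correct, but it is not the route the paper takes. You give the classical construction: choose a free presentation $R\to F\to G$, set $\widetilde G_{uni}=[F,F]/[F,R]$ with kernel $(R\cap[F,F])/[F,R]$, identify that kernel via Hopf's formula, and verify centrality, perfectness of $\widetilde G_{uni}$, existence of the comparison map by lifting along the free group, and uniqueness from perfectness. All of these steps are sound (including the observation that discreteness of $\widetilde G_{uni}$ disposes of the Lie-group formulation), and the only nonformal input is indeed Hopf's formula. The paper instead argues cohomologically and gives no explicit model: since $G$ is perfect, the universal coefficient theorem gives $H^1(G;A)=0$, so by the Dold--Kan description the groupoid of central extensions of $G$ by $A$ has no nontrivial automorphisms, and $H^2(G;A)\cong Hom(H_2(G),A)$, so letting $A$ vary produces an equivalence between the category of central extensions and the under-category $H_2(G)\!\downarrow\!\mA b$, whose initial object $id_{H_2(G)}$ is the Schur cover. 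What your approach buys is an explicit group and independence from the extension-classification machinery; what the paper's approach buys is uniformity, since the identical argument one degree higher (using $H^1=H^2=0$ for superperfect $G$ and $H^3(G;A)\cong Hom(H_3(G),A)$) proves the categorical statement of Theorem \ref{thm:Categorical_Schur_cover}(1), for which no analogue of the free-presentation model is available. It also yields the stronger conclusion, used later in the paper, that the whole category of central extensions is equivalent to $H_2(G)\!\downarrow\!\mA b$, not merely that an initial object exists.
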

\begin{Lem}[Second Whitehead Lemma]\label{lem:Whitehead}
  Let $G$ be a semisimple, compact and connected Lie group. Then the
  universal covering group of $G$ is a Schur cover.
  Its central subgroup is the fundamental group $$A_{uni}\,=\,\pi_1(G).$$
\end{Lem}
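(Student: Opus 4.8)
The plan is to verify the universal property in the definition of a Schur cover directly, using the Lie-algebra Whitehead lemmas to reduce a central extension with possibly positive-dimensional kernel to an ordinary covering-space problem. First I would record the basic structure: since $G$ is compact, connected and semisimple, Weyl's theorem gives that $\pi_1(G)$ is finite, and the universal covering group $\widetilde G_{uni}\to G$ is again a compact connected Lie group, now simply connected, with kernel the central finite subgroup $\pi_1(G)$. In particular it is a legitimate finite-dimensional central extension of $G$, and only universality remains to be checked.

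So let $1\to A\to E\xrightarrow{p} G\to 1$ be an arbitrary finite-dimensional central extension. Differentiating produces a central extension of Lie algebras $0\to\mathfrak a\to\mathfrak e\to\mathfrak g\to0$, with $\mathfrak a=\mathrm{Lie}(A)$ central in $\mathfrak e$ and $\mathfrak g$ semisimple. Such extensions are classified by $H^2(\mathfrak g,\mathfrak a)$ (trivial coefficients, since $\mathfrak a$ is central), which vanishes by Whitehead's second lemma; hence the sequence splits. The cleanest way to package this is to pass to the derived subalgebra $\mathfrak h:=[\mathfrak e,\mathfrak e]$: because $\mathfrak g=[\mathfrak g,\mathfrak g]$, once the extension splits one has $\mathfrak e\cong\mathfrak g\oplus\mathfrak a$ and therefore $\mathfrak h=[\mathfrak g,\mathfrak g]$, so $\mathfrak h$ meets the central $\mathfrak a$ trivially and is carried isomorphically onto $\mathfrak g$ by $dp$. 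Note that $\mathfrak h$ is canonical, which already encodes the uniqueness of the splitting; equivalently, $H^1(\mathfrak g,\mathfrak a)=0$ (the first Whitehead lemma) forces the section to be unique.

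Next I would integrate $\mathfrak h$ to the connected immersed Lie subgroup $H\subseteq E$. The composite $H\hookrightarrow E\xrightarrow{p} G$ is a homomorphism of connected Lie groups inducing an isomorphism on Lie algebras, hence a covering homomorphism onto $G$. Now the universal property of the pointed universal cover supplies a unique basepoint-preserving lift $\phi\colon\widetilde G_{uni}\to H\hookrightarrow E$ of $p$; since a basepoint-preserving lift between connected Lie groups is automatically a homomorphism, and $\phi$ carries the fibre $\pi_1(G)$ over $1$ into $A$, it is a morphism of central extensions. For uniqueness, suppose $\phi,\phi'$ both lift $\mathrm{id}_G$. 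Then $x\mapsto\phi(x)\phi'(x)^{-1}$ takes values in the central subgroup $A$ and, precisely because $A$ is central, is a homomorphism $\widetilde G_{uni}\to A$; as $\widetilde G_{uni}$ is semisimple connected and hence perfect, every such homomorphism is trivial, so $\phi=\phi'$.

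The main obstacle, and the only genuinely non-formal ingredient, is Whitehead's second lemma. It is indispensable here exactly because $A$ may be positive-dimensional (for instance $A=U(1)$), so $E\to G$ is in general \emph{not} a covering map and covering-space theory alone cannot manufacture the lift; the vanishing of $H^2(\mathfrak g,\mathfrak a)$ is what isolates the semisimple subgroup $H$ through which the universal cover then factors. Everything downstream — integrating the subalgebra, recognizing $H\to G$ as a covering, and invoking the universal property of $\widetilde G_{uni}$ — is standard Lie theory.
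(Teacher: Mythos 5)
Your argument is correct, but it is a genuinely different route from the one the paper takes. You split an arbitrary central extension $A\to E\to G$ at the infinitesimal level using the Lie-algebra Whitehead lemmas $H^1(\mathfrak g,\mathfrak a)=H^2(\mathfrak g,\mathfrak a)=0$, integrate the canonical complement $[\mathfrak e,\mathfrak e]$ to a subgroup of $E$ covering $G$, lift through the universal cover, and get uniqueness from perfectness of $\widetilde G_{uni}$. The paper instead deduces the lemma in one line from Theorem \ref{thm:Lie}: by Weyl's theorem $\widetilde G_{uni}$ is again compact, and Theorem \ref{thm:Lie} gives $H^1_{gp}(\widetilde G_{uni};T)=H^2_{gp}(\widetilde G_{uni};T)=0$ for every finite-dimensional abelian Lie group $T$, a vanishing proved topologically from $\pi_i(B\widetilde G_{uni})=0$ for $i\leq 3$ together with a d\'evissage on the coefficient group ($T/T_0$, the $\bbR^m$ factor, the compact torus). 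Both proofs ultimately rest on the same fact --- every finite-dimensional central extension of the simply connected group splits uniquely --- but they establish it by different means. What the paper's route buys is uniformity: the same Theorem \ref{thm:Lie} also computes $H^3_{gp}$, which is exactly what is needed for the categorical analogue in Theorem \ref{thm:Categorical_Schur_cover}(2). Your Lie-algebraic route is more elementary and self-contained in degree $2$, but it does not extend to degree $3$, where $H^3(\mathfrak g,\bbR)\neq 0$ (the Cartan $3$-form) and the answer is governed by $\pi_4(BG)$ rather than by Lie algebra cohomology. Two points worth making explicit in your write-up: the differentiation step uses that $E\to G$ is a submersion, which holds because the extensions considered here are classified by locally continuous cocycles and hence are locally trivial; and your uniqueness step needs only that every \emph{continuous} homomorphism from the connected group $\widetilde G_{uni}$ with perfect Lie algebra to $A$ vanishes, which is what your perfectness remark actually supplies.
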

To ephasize the analogy between these two statements, let $BG$
be the classifying space of $G$.
Then we have
$$
  \pi_i(BG) \,=\, \pi_{i-1}(G).
$$
So, the Lie group $G$ is connected if and
only if $\pi_1(BG)$ is trivial. In this case, we have
$$
  H_1(BG;\bbZ) \;=\; 0
$$
and
$$
  H_2(BG;\bbZ) \;=\; \pi_2(BG)\;\cong\;\pi_1(G).
$$
The goal of this section is to develop the theory of
Schur covers in the context of categorical central extensions.
Let $G$ be a finite dimensional Lie group, and write $\mE\!xt(G)$ for the bicategory of finite dimensional central Lie 2-group extensions
  \begin{center}
    \begin{tikzpicture}
      \node at (0,0) [name=A] {$1/\!\!/A$};
      \node at (2.2,0) [name=B] {$\mG$};
      \node at (4.2,0) [name=C] {$G$,};
      \draw[->] (A) -- (B);
      \draw[->] (B) -- (C);
    \end{tikzpicture}
  \end{center}
as in \cite{Schommer-Pries11}.
{\em Central} in this context means that the conjugation action of $G$ on $A$ is trivial,
{\em Lie} means that $\mG$ is a finite dimensional Lie groupoid and that the additional data (tensor multiplication, associator, 
etc.) are required to be locally continuous and smooth in an appropriate
sense.
\begin{Def}
  A {\em categorical Schur} cover of $G$ is an
  initial object in $\mE\!xt(G)$. 
\end{Def}
Explicitly,
  \begin{center}
    \begin{tikzpicture}
      \node at (-0.3,0) [name=A] {$1/\!\!/A_{uni}$};
      \node at (2.2,0) [name=B] {$\mG_{uni}$};
      \node at (4.3,0) [name=C] {$G$};
      \draw[->] (A) -- (B);
      \draw[->] (B) -- (C);
    \end{tikzpicture}
  \end{center}
  is a categorical Schur cover of $G$ if for any other finite dimensional
central Lie 2-group extension as above
there exists a 1-morphism
  \begin{center}
    \begin{tikzpicture}
      \node at (2.5,0) [name=B] {$1/\!\!/A_{uni}$};
      \node at (5.5,0) [name=C] {$\mG_{uni}$};
      \node at (8.5,0) [name=D] {$G$};
    
      \draw[->] (B) -- (C);
      \draw[->] (C) -- (D);

      \node at (2.5,-1.8) [name=B2] {$1/\!\!/ A$};
      \node at (5.5,-1.8) [name=C2] {$\mG$};
      \node at (8.5,-1.8) [name=D2] {$G$};
    
      \draw[->] (B2) -- (C2);
      \draw[->] (C2) -- (D2);
 
      \draw[->] (B) -- node [midway, left] {} (B2);
      \draw[->] (C) -- (C2);
      \draw[double equal sign distance] (D) -- node [midway, right] {} (D2);
    \end{tikzpicture}
  \end{center}
  
in $\mE\!xt(G)$, which is unique up to unique 2-isomorphism.
If it exists, the categorical Schur cover of $G$
is unique up to equivalence, which in turn is unique up to unique
isomorphism.
The goal of this section is to prove the following result.
\begin{Thm}\label{thm:Categorical_Schur_cover}
  \begin{enumerate}
  \item Let $G$ be a superperfect discrete group. Then $G$ possesses a
    categorical Schur cover, whose center is
    $$A_{uni}=H_3(G).$$
  \item Let $G$ be a simply connected compact Lie group, and let $s$
    be the number of simple factors of $G$. Then $G$ possesses a
    categorical Schur cover, whose center is $$A_{uni}=U(1)^s.$$
  \end{enumerate}
\end{Thm}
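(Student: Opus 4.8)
The plan is to reduce the two cases to the cohomological classification of categorical central extensions, using the bicategory $\mE\!xt(G)$ and the identification of initial objects with universal cohomology classes. The key structural fact — which I would establish first or cite from \cite{Schommer-Pries11} — is that for a Lie group $G$ the equivalence classes of finite dimensional central Lie 2-group extensions with center $A$ are classified by a cohomology group $H^3(G;A)$ (smooth/continuous cohomology of $G$, i.e. the cohomology of the classifying space $BG$ with appropriate coefficients), functorially in $A$. Under this identification, an extension is determined by a class in $H^3(BG;A)$, and a morphism of extensions over the identity on $G$ corresponds to a map of coefficient groups carrying one class to the other. Being an initial object in $\mE\!xt(G)$ then translates into the statement that the center $A_{uni}$ together with its distinguished class $u\in H^3(BG;A_{uni})$ is the \emph{universal coefficient}: every class $\alpha\in H^3(BG;A)$ is $f_*(u)$ for a unique $f\colon A_{uni}\to A$. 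This is precisely the universal coefficient property of the third integral (co)homology, which is why $H_3(G)$ and $U(1)^s$ appear.

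\medskip

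\emph{For part (1),} the plan is to use that $G$ is superperfect, so $H_1(G)=H_2(G)=0$. First I would pass to continuous cohomology of the discrete group $G$, where $H^3(G;A)$ is computed by $BG$. By the universal coefficient theorem, $H^3(BG;A)$ fits in a short exact sequence involving $\operatorname{Ext}(H_2(BG),A)$ and $\operatorname{Hom}(H_3(BG),A)$. Since $H_2(G)=0$ kills the $\operatorname{Ext}$ term, I get a natural isomorphism
\begin{equation*}
  H^3(BG;A)\;\cong\;\operatorname{Hom}\bigl(H_3(G),A\bigr),
\end{equation*}
natural in $A$. This exhibits $H_3(G)$, with the class corresponding to $\mathrm{id}_{H_3(G)}$, as the universal coefficient: every extension is pulled back along a unique homomorphism $H_3(G)\to A$. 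I would then check that this universal class is represented by an honest initial object $\mG_{uni}$ in the bicategory — i.e. that the universal property at the level of cohomology classes lifts to the existence of a 1-morphism, unique up to unique 2-isomorphism. This last lifting is where one must be careful that the classification of \cite{Schommer-Pries11} is an equivalence of bicategories, not merely a bijection on $\pi_0$, so that Hom-categories match up and the initial object is genuinely initial in the 2-categorical sense.

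\medskip

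\emph{For part (2),} the plan is parallel but uses the topology of the compact simply connected $G$. Here the relevant cohomology is continuous/smooth, so the classifying space $BG$ is the topological one. Since $G$ is simply connected, $\pi_1(BG)=0$ and $\pi_2(BG)\cong\pi_1(G)=0$, and by Lemma \ref{lem:Whitehead} the group is a Schur cover of itself, so $H_2(BG)=0$ as well. The Hurewicz and Bott-type computation gives $H_3(BG;\bbZ)\cong\bbZ^s$, one copy for each simple factor (the generators being the level classes of the simple summands). For a Lie 2-group extension the natural coefficient object is $U(1)$, and one computes
\begin{equation*}
  H^3_{\mathrm{cts}}(BG;U(1))\;\cong\;\operatorname{Hom}(H_3(BG),U(1))\;\cong\;(U(1))^s,
\end{equation*}
again with the $\operatorname{Ext}$ term vanishing because $H_2(BG)=0$. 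The universal object is then the extension whose center is $U(1)^s$ carrying the tautological class, giving $A_{uni}=U(1)^s$. I expect \textbf{the main obstacle} to be the passage from the cohomological universal property to genuine initiality in the bicategory: one must verify that morphisms of extensions are faithfully detected by the coefficient maps and that the automorphisms (the 2-morphisms) contribute no obstruction, so that the 1-morphism out of $\mG_{uni}$ exists and is unique up to unique 2-isomorphism. In the smooth/Lie setting one must additionally ensure the relevant cohomology is the continuous (Segal–Moore) cohomology and that the universal coefficient sequence still splits naturally there; handling the smoothness of the classifying class and the divisibility of $U(1)$ (which guarantees the $\operatorname{Hom}$ functor behaves well) is the technical heart of the argument.
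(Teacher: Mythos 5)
Your treatment of part (1) is essentially the paper's argument: superperfectness kills the $Ext$ term in the universal coefficient sequence, giving $H^3(G;A)\cong Hom(H_3(G),A)$ naturally in $A$, and the vanishing of $H^1(G;A)$ and $H^2(G;A)$ (again from $H_1(G)=H_2(G)=0$ via universal coefficients) is exactly what makes the Hom-categories of $\mE\!xt(G)$ discrete, so that the cohomological universal property lifts to genuine initiality. That part is sound.

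Part (2), however, contains a genuine error. You assert $H_3(BG;\bbZ)\cong\bbZ^s$ and then $H^3_{\mathrm{cts}}(BG;U(1))\cong Hom(H_3(BG),U(1))\cong U(1)^s$; both steps fail. Since $G$ is simply connected and compact, $\pi_i(BG)=\pi_{i-1}(G)=0$ for $i\le 3$ and $\pi_4(BG)\cong\pi_3(G)\cong\bbZ^s$, so $H_3(BG)=0$ (you appear to have confused $H_3(BG)$ with $H_3(G)\cong\pi_3(G)$), and the interesting group sits in degree four: $H^4(BG;\bbZ)\cong\bbZ^s$. Moreover, for non-discrete coefficients the locally continuous group cohomology $H^3_{gp}(G;T)$ is not computed by a naive universal coefficient formula on $BG$; one needs the reduction of $T$ to a compact torus and the degree shift coming from the exponential sequence $\check T\to\mathfrak t\to T$ together with the vanishing of $H^{i}_{gp}(G;\bbR^m)$ for $i\geq 1$ and $G$ compact. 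This is the content of the paper's Theorem \ref{thm:Lie}, which gives $H^3_{gp}(G;T)\cong H^4(BG;\check T)\cong \check T^{\,s}$. In particular $H^3_{gp}(G;U(1))\cong\bbZ^s$, not $U(1)^s$: the universal center does not arise as the value of a cohomology group but from corepresentability, via the natural isomorphism $\check T^{\,s}\cong Hom(U(1)^s,T)$, which identifies $\mE\!xt(G)$ with the category of finite dimensional abelian Lie groups under $U(1)^s$. Without the torus reduction and the exponential-sequence shift, your argument does not produce the stated center.
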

Note that simply connected compact Lie groups are automatically
semi-simple \cite[Thm.5.29]{MimuraToda91}, so that the statement in
(2) makes sense.
  Let
  \begin{center}
  \begin{tikzpicture}
    \node at (2,0) [name=b] {$C^0$};
    \node at (4,0) [name=c] {$C^1$};
    \node at (6,0) [name=d] {$\dots$};
    \node at (8,0) [name=e] {$C^{n-1}$};
    \node at (10,0) [name=f] {$Z^n$};

    \draw[->] (b) -- node [midway, above] {$d$} (c);
    \draw[->] (c) -- node [midway, above] {$d$} (d);
    \draw[->] (d) -- node [midway, above] {$d$} (e);
    \draw[->>] (e) -- node [midway, above] {$d$} (f);
  \end{tikzpicture}
  \end{center}
  be a cochain complex of abelian groups. Recall that the {\em Dold-Kan
  $n$-groupoid} of $C^\bullet$ is the
  (strictly symmetric monoidal) strict $n$-groupoid
  with objects $Z^n$ and arrows
  \begin{eqnarray*}
    1Hom (\alpha,\beta) & = & \{\gamma\in C^{n-1}\mid d\gamma =
                             \beta-\alpha\}\\
    2Hom (\gamma,\delta) & = & \{\epsilon\in C^{n-2}\mid d\epsilon =
                              \delta-\gamma\}\\
    & \dots &
  \end{eqnarray*}
  Composition of arrows is given by addition, and so is the monoidal
  structure.   
The following theorem summarizes results of Schur (1911),
Singh (1976),
Schommer-Pries \cite[Thm.99]{Schommer-Pries11}, Wagemann and Wockel
\cite{WagemannWockel15}, Schreiber \cite{Schreiber13}.
\begin{Thm}\label{thm:DoldKan}
  Let $G$ and $A$ be finite dimensional Lie groups with $A$ abelian.
  Let $C^\bullet_{gp}(G;A)$ be the cochain complex of locally
  continuous group cocycles on $G$ with values in the trivial
  $G$-module $A$ as in
  \cite[Def.I.1]{WagemannWockel15}.
  Then the Dold-Kan groupoid of
  \begin{center}
    \begin{tikzpicture}
      \node at (2,0) [name=b] {$C^1_{gp}(G;A)$};
      \node at (5.5,0) [name=c] {$Z^2_{gp}(G;A)$};

      \draw[->>] (b) -- node [midway, above] {$d$} (c);
    \end{tikzpicture}
  \end{center}
  is equivalent to the symmetric monoidal category of central extensions of the form
  \begin{center}
    \begin{tikzpicture}
      \node at (0,0) [name=A] {$A$};
      \node at (2,0) [name=B] {$\widetilde G$};
      \node at (4,0) [name=C] {$G$.};
      \draw[->] (A) -- (B);
      \draw[->] (B) -- (C);
    \end{tikzpicture}
  \end{center}
  The Dold-Kan 2-groupoid of
  \begin{center}
    \begin{tikzpicture}
      \node at (2,0) [name=b] {$C^1_{gp}(G;A)$};
      \node at (5.5,0) [name=e] {$C^2_{gp}(G;A)$};
      \node at (9,0) [name=c] {$Z^3_{gp}(G;A)$};

      \draw[->] (b) -- node [midway, above] {$d$} (e);
      \draw[->>] (e) -- node [midway, above] {$d$} (c);
    \end{tikzpicture}
  \end{center}
  is equivalent to the symmetric monoidal bicategory of categorical central extensions of
  the form
  \begin{center}
    \begin{tikzpicture}
      \node at (0,0) [name=A] {$1/\!\!/A$};
      \node at (2,0) [name=B] {$\mG$};
      \node at (4,0) [name=C] {$G$.};
      \draw[->] (A) -- (B);

      \draw[->] (B) -- (C);
    \end{tikzpicture}
  \end{center}
\end{Thm}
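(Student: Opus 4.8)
The plan is to prove both equivalences by the same strategy: construct the comparison functor out of the Dold--Kan $n$-groupoid explicitly, check that it is fully faithful and essentially surjective at every level, and then match the two symmetric monoidal structures. Since the bicategorical statement is a categorification of the classical one, I would first dispatch the $n=2$ case as a template and then lift the argument one level.

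For ordinary extensions, a normalized cocycle $\gamma\in Z^2_{gp}(G;A)$ is sent to the extension $\widetilde G_\gamma=A\times G$ with multiplication twisted by $\gamma$; the cocycle identity is exactly associativity, and local continuity of $\gamma$ makes $\widetilde G_\gamma$ a Lie group. A $1$-cochain $\beta$ with $d\beta=\gamma'-\gamma$ induces the isomorphism of extensions $(a,g)\mapsto(a+\beta(g),g)$, so $1$-morphisms in the Dold--Kan groupoid map to morphisms of extensions. Conversely, a locally continuous section of an arbitrary central extension extracts a representing cocycle, inverse to the first assignment up to the morphisms just described. Compatibility with the monoidal structure is the statement that Baer sum corresponds to addition of cocycles, which is immediate. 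The one genuine input is that every finite-dimensional central extension of Lie groups admits a local continuous section, so that a cocycle in $C^\bullet_{gp}$ as defined by Wagemann--Wockel exists; this is where the hypotheses are used.

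For categorical extensions I would follow Schommer--Pries, \cite[Thm.99]{Schommer-Pries11}. A $3$-cocycle $\omega\in Z^3_{gp}(G;A)$ is sent to the skeletal categorical extension $\mG_\omega$ with one object $x_g$ for each $g\in G$ and $aut(x_g)=A$, tensor product $x_g\bullet x_h=x_{gh}$, and associator given by $\omega$; the $3$-cocycle condition is precisely the pentagon, while normalization and local continuity supply the unit constraints and the smooth structure. In the reverse direction one replaces a given $\mG$ by an equivalent skeletal model, chooses a locally continuous section on objects together with coherence data for the resulting tensor functor, and reads off $\omega$ from the associator. A $2$-cochain trivializing the difference of two associators is exactly a monoidal natural isomorphism of the corresponding tensor functors, hence a $1$-morphism of the Dold--Kan $2$-groupoid, and a $1$-cochain is a modification, hence a $2$-morphism; this gives full faithfulness on $1$- and $2$-morphisms, while essential surjectivity is the existence of $\mG_\omega$.

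The step I expect to be the main obstacle is the careful matching of the \emph{symmetric monoidal} and coherence data across the equivalence in the locally continuous setting: one must check that the braiding and symmetry on categorical extensions correspond to the strictly symmetric structure on the Dold--Kan side coming from addition, and, at each stage, that sections and coherence isomorphisms can be chosen locally continuously, so that the cochains produced genuinely lie in $C^\bullet_{gp}(G;A)$. Once this is under control, the rest is a degreewise unwinding of the Dold--Kan dictionary, in which objects correspond to top cocycles and $k$-morphisms to degree-$(n-k)$ cochains trivializing differences.
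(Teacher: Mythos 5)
The paper does not prove this theorem: it is stated explicitly as a summary of results from the literature (Schur, Singh, Schommer--Pries \cite[Thm.99]{Schommer-Pries11}, Wagemann--Wockel, Schreiber), and the only thing the text adds afterwards is the explicit model $\mG_\alpha$ attached to a $3$-cocycle $\alpha$. Your outline reconstructs exactly the argument of the cited sources --- twisted product from a $2$-cocycle, skeletal monoidal groupoid with associator from a $3$-cocycle, locally continuous sections in the reverse direction, Baer sum matching addition --- so it is consistent with what the paper relies on, and the step you flag as delicate (choosing sections and coherence data locally continuously so the extracted cochains lie in $C^\bullet_{gp}$) is indeed where the real content of the Wagemann--Wockel/Schommer--Pries results sits. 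One small correction to the dictionary in the categorical case: a $2$-cochain $\gamma$ with $d\gamma=\beta-\alpha$ does not give a monoidal natural isomorphism but rather a monoidal \emph{functor} $\mG_\alpha\to\mG_\beta$ (identity on underlying data, tensorator encoded by $\gamma$), i.e.\ a $1$-morphism of extensions; it is the $1$-cochains that give the monoidal natural transformations, i.e.\ the $2$-morphisms. With that relabelling your correspondence of $k$-morphisms with degree-$(n-k)$ cochains matches the Dold--Kan $2$-groupoid as defined in the paper.
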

To be specific, let $\alpha$ be
an $A$-valued 3-cocycle on $G$. Then we have the (skeletal) groupoid
\begin{center}
  \begin{tikzpicture}
     \node at (-.1,0) [name=A] {$\mG_\alpha$\,};
     \node at (1.8,0) [name=B] {\,
     $     \left(\!
       \raisebox{-.75cm}{\begin{tikzpicture}
         \node at (0,.6) [name=C] {$G\times A$};
         \node at (0,-.6) [name=D] {$G$};
         \draw[->] ([xshift=-6pt]C.south) -- ([xshift=-6pt]D.north);
         \draw[->] ([xshift=6pt]C.south) -- ([xshift=6pt]D.north);
       \end{tikzpicture}}\!
     \right)$

     };    
     \draw[double equal sign distance] (A) -- (B);
  \end{tikzpicture}
\end{center}

The group multiplications give a monoidal structure on $\mG_\alpha$
with the associator encoded in $\alpha$, and the
unit maps trivial.
This is the categorical group associated to alpha by the equivalence
in the theorem.
\begin{Cor}[{\cite{Brown94},\cite{Schommer-Pries11}}]\label{Cor:vanishing}
  Assume we are given finite dimensional Lie groups $G$ and $A$ with
  $A$ abelian. Then the following hold.
  \begin{enumerate}
  \setlength{\itemsep}{3pt}
  \item Classes in $H^2_{gp}(G;A)$ are in one to one correspondence
    with isomorphism classes of central extensions of $G$ by $A$.
  \item If we have
    $$
      H^1_{gp}(G;A) \cong 0,
    $$
    then a degree 2 class as in (1) determines the corresponinding
    central extension uniquely up to unique isomorphism.
  \item Classes in $H^3_{gp}(G;A)$ are in one to one correspondence
    with equivalence classes of categorical central extensions of $G$
    with center $A$.
  \item If we have
    $$
      H^i_{gp}(G;A) \cong 0
    $$
    for $i=1$ and $i=2$, then a degree 3 class as in (3) determines the corresponinding
    categorical central extension uniquely up to equivalence, which in
    turn is unique up to unique isomorphism.
  \end{enumerate}
\end{Cor}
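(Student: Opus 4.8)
The plan is to deduce all four statements from the Dold--Kan description of (categorical) central extensions furnished by Theorem \ref{thm:DoldKan}, reducing each claim to reading off a homotopy or automorphism group of the relevant Dold--Kan $(2\text{-})$groupoid from the cohomology of the truncated cocycle complex. Throughout I would use that $A$ is a trivial $G$-module, so that the differential $d\colon C^0_{gp}(G;A)\to C^1_{gp}(G;A)$ vanishes; consequently $B^1_{gp}(G;A)=0$ and the $1$-cocycles satisfy $Z^1_{gp}(G;A)=H^1_{gp}(G;A)$. This identification is what lets me translate the kernels appearing inside the truncated complexes back into honest cohomology rather than artefacts of the truncation.

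For (1) and (2), I would invoke the first half of Theorem \ref{thm:DoldKan}: the groupoid of central extensions is equivalent to the Dold--Kan groupoid of $C^1_{gp}(G;A)\twoheadrightarrow Z^2_{gp}(G;A)$, whose objects are the $2$-cocycles and whose morphisms $\alpha\to\beta$ are $1$-cochains $\gamma$ with $d\gamma=\beta-\alpha$. Two objects are isomorphic precisely when their difference lies in $B^2_{gp}(G;A)=d\!\left(C^1_{gp}(G;A)\right)$, that is, when they define the same class in $H^2_{gp}(G;A)$, which gives the bijection in (1) (and in fact a group isomorphism, since the equivalence is symmetric monoidal). For (2), the automorphism group of any object is $\{\gamma\in C^1_{gp}(G;A)\mid d\gamma=0\}=Z^1_{gp}(G;A)=H^1_{gp}(G;A)$; so if $H^1_{gp}(G;A)\cong0$ every object is rigid, and the nonempty set of isomorphisms between two objects in a fixed class is a torsor over the trivial group, hence a single element. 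That is exactly the assertion that the extension is determined up to unique isomorphism.

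Parts (3) and (4) proceed one categorical level up via the second half of Theorem \ref{thm:DoldKan}: the bicategory of categorical central extensions is equivalent to the Dold--Kan $2$-groupoid of $C^1_{gp}(G;A)\to C^2_{gp}(G;A)\twoheadrightarrow Z^3_{gp}(G;A)$, with objects the $3$-cocycles, $1$-morphisms $\alpha\to\beta$ the $2$-cochains $\gamma$ with $d\gamma=\beta-\alpha$, and $2$-morphisms $\gamma\Rightarrow\delta$ the $1$-cochains $\epsilon$ with $d\epsilon=\delta-\gamma$. Equivalence classes of objects are then classes in $H^3_{gp}(G;A)$, giving (3). For (4) I would track the automorphism tower over a fixed $H^3$-class: connecting $1$-morphisms exist and, modulo $2$-isomorphism, form a torsor over $Z^2_{gp}(G;A)/B^2_{gp}(G;A)=H^2_{gp}(G;A)$, while the $2$-automorphisms of a fixed $1$-morphism are $Z^1_{gp}(G;A)=H^1_{gp}(G;A)$. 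Hence vanishing of $H^2_{gp}(G;A)$ forces the connecting equivalence to be unique up to $2$-isomorphism, and vanishing of $H^1_{gp}(G;A)$ forces that $2$-isomorphism to be unique, together yielding the claimed determination up to equivalence, unique up to unique isomorphism.

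The argument is entirely formal once Theorem \ref{thm:DoldKan} is granted; I expect the only real obstacle to be the bookkeeping in (4), namely matching each layer of the $2$-categorical rigidity statement---existence of the equivalence, its uniqueness up to a $2$-cell, and the uniqueness of that $2$-cell---to the respective vanishing of the $H^3$-difference, of $H^2_{gp}(G;A)$, and of $H^1_{gp}(G;A)$, while keeping the degree shift of the truncated complex straight so that the relevant kernels are genuinely $H^1$ and $H^2$.
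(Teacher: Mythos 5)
Your proposal is correct and follows exactly the route the paper intends: the corollary is stated as a formal consequence of Theorem \ref{thm:DoldKan}, and your unwinding of the Dold--Kan (2-)groupoid --- isomorphism/equivalence classes of objects giving $H^2_{gp}(G;A)$ resp.\ $H^3_{gp}(G;A)$, automorphisms giving $Z^1_{gp}(G;A)=H^1_{gp}(G;A)$, and $1$-morphisms modulo $2$-cells forming a torsor over $H^2_{gp}(G;A)$ --- is precisely the argument the paper delegates to \cite{Brown94} and \cite{Schommer-Pries11}. The observation that triviality of the $G$-action kills $B^1_{gp}(G;A)$, so the kernels in the truncated complexes really are $H^1$ and $H^2$, is the right bookkeeping point and is handled correctly.
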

Let $G$ be a discrete group. 
Write $BG$ for the classifying space (geometric realization of the nerve) of $G$. 
Then we have isomorphisms
\begin{eqnarray*}
\setlength{\itemsep}{3pt}
  H_i^{gp}(G) & \cong &  H_i(BG), \\ 
  H^i_{gp}(G;A) & \cong &  H^i(BG;A), 
\end{eqnarray*}
where on the right-hand side we have singular (co)homology.
The universal coefficient theorem gives the short exact sequence
\begin{center}
  \begin{tikzpicture}[scale=1.8]
    \node at (-0.1,0) [name=A] {$0$};
    \node at (1.7,0) [name=B] {$Ext\left(H_{i-1}(G),A\right)$};
    \node at (4,0) [name=C] {$H^i(G;A)$};
    \node at (6.2,0) [name=D] {$Hom\left(H_{i}(G),A\right)$};
    \node at (8,0) [name=E] {$0$,};

    \draw[->] (A) -- (B);
    \draw[->] (D) -- (E);
    \draw[->] (B) -- (C);
    \draw[->] (C) -- (D);
  \end{tikzpicture}
\end{center}
natural in $A$.
\begin{proof}[{Proof of Theorems \ref{thm:Schur_cover} and \ref{thm:Categorical_Schur_cover}
  (1)}]
  Assume that $G$ is perfect, and let $A$ be an abelian group, viewed
  as trivial $G$-module. Then the universal coefficient theorem with $i=1$ implies 
  \[
    0\,=\, H^1(G;A) \, = \,Z^1(G,A).
  \]
  By Theorem \ref{thm:DoldKan}, it follows that the groupoid of
  central extensions of $G$ by $A$ is discrete in the sense that there
  are no non-identity automorphisms. Using the universal coefficient
  theorem with $i=2$, it follows that the isomorphism classes of said
  groupoid are parametrised by $Hom(H_2(G),A)$. Now allow $A$ to
  vary. Then we obtain an equivalence from the category of central
  extensions of $G$ to the under category $H_2(G)\downarrow\mA b$,
  sending a central extension to the homomorphism classifying it. In
  particular, there is a universal central extension, which is
  characterised uniquely, up to unique isomorphism, by the fact that it
  is classified by $id_{H_2(G)}$. The proof for Theorem \ref{thm:Categorical_Schur_cover}
  (1) is analogous.
\end{proof}
We also have the following corollary of the universal coefficient
theorem (using injectivity of the circle group).
\begin{Cor}
  If $G$ is finite,
  we have a non-canonical isomorphism
  \begin{center}
    \begin{tikzpicture}[scale=1.8]
      \node at (4.5,0) [name=C] {$H^i(G;U(1))$};
      \node at (7,0) [name=D]
      {$\widehat{H_{i}(G)}\,\cong\,H_{i}(G)$.};
    
      \draw[->] (C) -- node [midway, above] {$\cong$} (D);
    \end{tikzpicture}
  \end{center}
  In particular, the (categorical) Schur multiplier of a finite group
  $G$ classifies (categorical) central extensions of
  $G$ by the circle group.
\end{Cor}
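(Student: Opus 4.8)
The plan is to read off the result directly from the universal coefficient short exact sequence displayed above, specialised to the coefficient group $A = U(1)$. The circle group is divisible, hence injective as a $\bbZ$-module, so the Ext term $Ext(H_{i-1}(G), U(1))$ vanishes identically. The short exact sequence therefore collapses to an isomorphism
$$
  H^i(G; U(1)) \;\cong\; Hom(H_i(G), U(1)) \;=\; \widehat{H_i(G)},
$$
which is moreover natural in $G$ by naturality of the universal coefficient sequence. This accounts for the first isomorphism in the statement.

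For the second isomorphism I would invoke the finiteness of $G$. When $G$ is finite, its integral homology groups are finitely generated and, for $i \geq 1$, annihilated by $|G|$, hence finite abelian. For any finite abelian group $M$ the structure theorem together with the identifications $\widehat{\bbZ/n} \cong \bbZ/n$ yields a (non-canonical) isomorphism $\widehat{M} \cong M$; applying this to $M = H_i(G)$ gives $\widehat{H_i(G)} \cong H_i(G)$. Composing the two isomorphisms produces the asserted $H^i(G; U(1)) \cong H_i(G)$.

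The \emph{in particular} clause is then immediate from Corollary \ref{Cor:vanishing}: part (1) there identifies isomorphism classes of central extensions of $G$ by $U(1)$ with classes in $H^2_{gp}(G; U(1))$, and part (3) identifies equivalence classes of categorical central extensions of $G$ with center $U(1)$ with classes in $H^3_{gp}(G; U(1))$. The isomorphism just established rewrites these as $H_2(G)$ and $H_3(G)$, namely the Schur multiplier and the categorical Schur multiplier.

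There is no serious obstacle here; the argument is essentially a one-line consequence of the injectivity of $U(1)$ together with Pontryagin duality for finite abelian groups. The only point demanding care is that the second isomorphism is genuinely non-canonical — it depends on a choice of decomposition of $M$ into cyclic factors and of a primitive root of unity in each — which is why the conclusion is stated as a non-canonical isomorphism and why the naturality present in the first step is not inherited by the composite.
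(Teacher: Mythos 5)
Your argument is correct and is exactly the one the paper intends: the corollary is stated as an immediate consequence of the universal coefficient sequence together with the injectivity of $U(1)$, followed by Pontryagin duality for the finite groups $H_i(G)$, $i\geq 1$. No further comment is needed.
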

These categorical central extensions by the circle group
are of interest for the theory of projective
2-representations  \cite{GanterUsher14}, just like central group extensions by the circle group are of interest for the theory of projective
representations.
\begin{Def}
  Let $\mG$ be a categorical group with center $A$. Let 
  $\phi\negmedspace:A\to B$ be a homomorphism to another abelian
  group. Then the {\em categorical group with center $B$ associated to
    $\mG$} (via $\phi$) is the groupoid 
  $\mG[\phi]$ 
  with objects identical to those of $\mG$ and arrows the 
  pairs $(f,b)$ with $f$ an arrow of $\mG$ and $b\in B$, modulo the
  equivalence relation 
  $$
    (f\bullet a,b) \,\sim\,(f,\phi(a)+b), \quad\quad a\in A.
  $$
  The multiplication data are inherited from $\mG$.
\end{Def}
Given an abelian group $H$, we will write $H\!\downarrow\!\mA b$ for
the category of abelian groups under $H$.
\begin{Thm} 
  \begin{enumerate}
  \item 
  For a perfect group $G$ with Schur cover $\widetilde G_{uni}$,
  the functor
  \begin{center}
    \begin{tikzpicture}
      \node at (0,0) [name=A, anchor=east] {\ensuremath{\widetilde
          G_{uni}[-]:H_2(G)\!\downarrow\mA b}\,}; 
      \node at (1.3,0) [name=B, anchor=west] {\,\ensuremath{Ext (G)}};
      \draw[->] (A) -- node [midway, above] {} (B);
    \end{tikzpicture}
  \end{center}
  sending the homomorphism
  $$
    \phi:{H_2(G)}\longrightarrow A
  $$
  to the balanced product
  $$
    A\times_{H_2(G)}\widetilde G_{uni}
  $$
  is an equivalence of categories.
  \item For a superperfect group $G$ with categorical Schur cover $\mG_{uni}$, 
  the 
  functor
  \begin{center}
    \begin{tikzpicture}
      \node at (0,0) [name=A, anchor=east] {\ensuremath{\mG_{uni}[-]:
          H_3(G)\!\downarrow\mA b}\,}; 
      \node at (1.3,0) [name=B, anchor=west] {\,\ensuremath{\mE\! xt (G)}};

      \draw[->] (A) -- node [midway, above] {} (B);
    \end{tikzpicture}
  \end{center}
   is an equivalence of (bi)categories. 
   \end{enumerate}
\end{Thm}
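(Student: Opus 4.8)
The plan is to deduce both statements from the equivalences already established in the proofs of Theorems \ref{thm:Schur_cover} and \ref{thm:Categorical_Schur_cover}(1). In each case that proof exhibits a \emph{classifying functor}
$$
  c\negmedspace: Ext(G)\longrightarrow H_2(G)\downarrow\mA b
$$
(respectively $\mathbf c\negmedspace:\mE\!xt(G)\to H_3(G)\downarrow\mA b$) and shows it to be an equivalence of (bi)categories; concretely, $c$ may be described as sending a central extension $\mE$ with center $A$ to the homomorphism $H_2(G)\to A$ obtained by restricting to centers the unique map $\widetilde G_{uni}\to\mE$ guaranteed by initiality of the Schur cover. Given this, it suffices to identify $\widetilde G_{uni}[-]$ as a quasi-inverse of $c$. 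I would invoke the elementary categorical fact that if $c$ is an equivalence and $F$ is a functor with $c\circ F\cong\mathrm{id}$, then $F\cong c^{-1}$ and is automatically an equivalence. Thus the entire argument reduces to producing a natural isomorphism $c\circ\widetilde G_{uni}[-]\cong\mathrm{id}_{H_2(G)\downarrow\mA b}$ (and its categorical analogue); the second composite $\widetilde G_{uni}[-]\circ c\cong\mathrm{id}$ then comes for free.

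For part (1), the key computation is that $c\bigl(\widetilde G_{uni}[\phi]\bigr)=\phi$ for every $\phi\negmedspace:H_2(G)\to A$. The balanced product $\widetilde G_{uni}[\phi]=A\times_{H_2(G)}\widetilde G_{uni}$ is the pushout of $H_2(G)\hookrightarrow\widetilde G_{uni}$ along $\phi$, and it carries a canonical coprojection $\widetilde G_{uni}\to\widetilde G_{uni}[\phi]$ of central extensions whose restriction to centers is exactly $\phi$. By initiality of $\widetilde G_{uni}$ in $Ext(G)$ (Theorem \ref{thm:Schur_cover}), this coprojection \emph{is} the unique map of extensions, so by the description of $c$ above we get $c(\widetilde G_{uni}[\phi])=\phi$. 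Naturality in $\phi$ is just functoriality of the pushout in the map one pushes along, so this assembles into the desired natural isomorphism $c\circ\widetilde G_{uni}[-]\cong\mathrm{id}$, and hence $\widetilde G_{uni}[-]$ is an equivalence.

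For part (2), I would run the identical argument one categorical level up. First, for superperfect $G$ the universal coefficient theorem gives $H^1_{gp}(G;A)=H^2_{gp}(G;A)=0$ and $H^3_{gp}(G;A)\cong Hom(H_3(G),A)$, naturally in $A$. By Corollary \ref{Cor:vanishing} (3) and (4) this means that $\mE\!xt(G)$ is locally discrete: its objects are classified up to equivalence by $H^3_{gp}(G;A)$, and all 1- and 2-morphisms between them are unique up to unique 2-isomorphism. Consequently the classifying assignment $\mathbf c$ is an equivalence of bicategories onto $H_3(G)\downarrow\mA b$, viewed as a locally discrete bicategory, and initiality of the categorical Schur cover $\mG_{uni}$ (Theorem \ref{thm:Categorical_Schur_cover}(1)) identifies $\mG_{uni}[\phi]$, formed via the construction $\mG[\phi]$, as the categorical extension classified by $\phi$. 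Exactly as in part (1), the canonical 1-morphism $\mG_{uni}\to\mG_{uni}[\phi]$ is forced to be the universal one, giving $\mathbf c\circ\mG_{uni}[-]\cong\mathrm{id}$ and hence the equivalence.

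I expect the only genuine obstacle to lie in part (2), in the bicategorical bookkeeping: one must check that the construction $\mG_{uni}[\phi]$ really represents the bicategorical pushout and that the comparison 1- and 2-morphisms are coherent, so that the pointwise ``unique up to unique 2-isomorphism'' clauses assemble into an honest pseudonatural equivalence. The saving grace is precisely the vanishing of $H^1_{gp}$ and $H^2_{gp}$ from Corollary \ref{Cor:vanishing}: because the hom-categories of $\mE\!xt(G)$ are then contractible groupoids, every coherence constraint is satisfied automatically, reducing the bicategorical claim to the same one-line argument as in part (1). Establishing this local discreteness cleanly is therefore the crux, while the group-theoretic content of both parts is the single identity $c(\widetilde G_{uni}[\phi])=\phi$.
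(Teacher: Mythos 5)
Your proposal is correct and follows essentially the same route as the paper: both reduce to the abstract equivalence $H_3(G)\!\downarrow\!\mA b\simeq\mE\!xt(G)$ supplied by the universal coefficient theorem (using $H^1_{gp}=H^2_{gp}=0$ to kill the higher morphisms) and then identify $\mG_{uni}[-]$ as realizing it via the single key identity that $\mG_{uni}[\phi]$ is the extension classified by $\phi$. The only cosmetic difference is that the paper verifies this identity at the cocycle level, writing $\mG_{uni}=\mG_{\alpha_{uni}}$ and invoking naturality of the universal coefficient isomorphism to get $[\phi_*\alpha_{uni}]\mapsto\phi$, whereas you verify it via the universal property of the balanced product and initiality of the Schur cover; the paper's closing remark that the inverse functor ``restricts the unique $1$-morphism $\mG_{uni}\to\mG$ to centers'' is precisely your classifying functor $c$.
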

\begin{proof}
  The fisrt part is classical, we prove (2).
  The universal coefficient theorem implies that the bicategory
  $\mE\!xt (G)$ has only identity 2-morphisms and that we have an abstract
  equivalence
  \begin{center}
    \begin{tikzpicture}
      \node at (0,0) [name=A, anchor=east] {\ensuremath{
          H_3(G)\!\downarrow\mA b}\,}; 
      \node at (1.3,0) [name=B, anchor=west] {\,\ensuremath{\mE\! xt (G)}.};
      \draw[->] (A) -- node [midway, above] {$\sim$} (B);
    \end{tikzpicture}
  \end{center}
  The identity map of $H_3(G)$ is an initial object of $H_3(G)\!\downarrow\mA b$.
  Under the isomorphism of the universal coefficient theorem, this corresponds to the class of
  a 3-cocycle $\alpha_{uni}$ with values in
  $H_3(G)$, and for arbitrary $A$, the universal coefficient isomorphism is
  \begin{eqnarray*}
     H^3(G;A) & \cong & Hom(H_3(G), A)
    \\
    \left[\phi_*\,\alpha_{uni}\right] & \longmapsto & \phi,
  \end{eqnarray*}  
  by naturality. If
  $$\mG_{uni}\,=\,\mG_{\alpha_{uni}},$$
  then, by construction,
  $$\mG_{\phi_*\alpha_{uni}}\,\simeq\,\mG_{uni}[\phi].$$ 
  An inverse of the functor $\mG_{uni}[-]$ 
  restricts the unique 1-morphism
  $\mG_{uni}\longrightarrow\mG$ to centers.
\end{proof}
\begin{Def}
  Let $G$ be a discrete group, not necessarily perfect. Assume
  that the Schur multiplier $H_2(G)$ vanishes. Then we still have the cohomology class
  $[\alpha_{uni}]$ and $\mG_{uni}$ as in the above proof. We will
  refer to any Lie 2-group extension equivalent to $\mG_{uni}$ as a 
  {\em weak categorical Schur cover} of
  $G$.
\end{Def}
In the situation of the definition,
\begin{center}
  \begin{tikzpicture}
    \node at (0,0) [name=A, anchor=east] {\ensuremath{\mG_{uni}[-]: H_3(G)\!\downarrow\mA b}\,};
    \node at (1.3,0) [name=B, anchor=west] {\,\ensuremath{\mE\! xt (G)}};
    \draw[->] (A) -- node [midway, above] {} (B);
  \end{tikzpicture}
\end{center}
is still essentially bijective, but may no longer be an equivalence of
bicategories.
Let now $G$ be a simply connected compact Lie group, and let $T$ be a
finite dimensional abelian Lie group with cocharacter lattice
$$
  \check T \; := \; Hom( U(1) , T ). 
$$
\begin{Thm}\label{thm:Lie}
  Let $s$ be the number of simple factors of $G$.
  Then
  $$
    H^1_{gp}(G;T) \;=\; H^2_{gp}(G;T) \;=\;0,
  $$
  and we have an isomorphism
  $$
    H^3_{gp}(G;T) \;\cong \; 
    \check T^{\,s},
  $$
  which is natural in $G$. 
\end{Thm}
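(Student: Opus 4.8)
The plan is to reduce the computation of the locally continuous group cohomology $H^*_{gp}(G;T)$ to the singular cohomology of the classifying space $BG$, using two facts about compact $G$. First, for any finite dimensional real vector space $V$ one has $H^n_{gp}(G;V)=0$ for $n>0$; this is the averaging argument (integrate cochains against normalised Haar measure to produce a contracting homotopy), and it is the one place where compactness of $G$ is essential. Second, for a discrete abelian group $D$ the locally continuous cohomology agrees with that of the classifying space, $H^n_{gp}(G;D)\cong H^n(BG;D)$, as in the references to Theorem \ref{thm:DoldKan}. Granting these, the whole statement becomes a computation of $H^*(BG)$ in low degrees.

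First I would record the topology of $BG$. Since $G$ is connected, simply connected, and every Lie group has $\pi_2=0$, the space $BG$ is $3$-connected, with $\pi_4(BG)\cong\pi_3(G)$. By Bott, $\pi_3$ of a compact simply connected simple group is $\bbZ$, so $\pi_3(G)\cong\bbZ^s$, and Hurewicz together with the universal coefficient theorem give $H^i(BG;D)=0$ for $i=1,2,3$ and $H^4(BG;D)\cong\mathrm{Hom}(\bbZ^s,D)\cong D^s$ for any discrete $D$. Next comes the main step: transferring the degree-$4$ class down to degree $3$ while switching from $\bbZ$- to $T$-coefficients. Writing $T_0$ for the identity component, $\mathfrak t$ for its Lie algebra, and identifying the cocharacter lattice $\check T\cong\pi_1(T_0)$ with $\ker(\exp)$, the exponential sequence $0\to\check T\to\mathfrak t\to T_0\to 0$ yields a long exact sequence in $H^*_{gp}(G;-)$. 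Since $\mathfrak t$ is a real vector space, its cohomology vanishes in positive degrees by the averaging fact, so the connecting maps give isomorphisms $H^n_{gp}(G;T_0)\cong H^{n+1}_{gp}(G;\check T)\cong H^{n+1}(BG;\check T)$ for $n\geq1$.

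Feeding in the computation of $H^*(BG;\check T)$ then gives $H^1_{gp}(G;T_0)=H^2_{gp}(G;T_0)=0$ and $H^3_{gp}(G;T_0)\cong H^4(BG;\check T)\cong\check T^{\,s}$. Finally I would remove the connectedness assumption on $T$ using $0\to T_0\to T\to\pi_0 T\to0$: as $\pi_0 T$ is discrete, $H^i_{gp}(G;\pi_0T)\cong H^i(BG;\pi_0T)=0$ for $i\leq3$, and the associated long exact sequence identifies $H^n_{gp}(G;T)\cong H^n_{gp}(G;T_0)$ for $n\leq3$. Naturality in $G$ is inherited from naturality of all the ingredients, the cleanest natural form of the answer being $H^3_{gp}(G;T)\cong\mathrm{Hom}(\pi_3(G),\check T)$, which specialises to $\check T^{\,s}$ after choosing the generator of $H^4(BG_i;\bbZ)$ for each simple factor.

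The main obstacle is not any of these exact-sequence manipulations but the two imported comparison facts for locally continuous cohomology: the vanishing $H^{>0}_{gp}(G;V)=0$ for vector coefficients and the identification $H^*_{gp}(G;D)\cong H^*(BG;D)$ for discrete coefficients. Each requires care about the precise locally continuous cochain model and a comparison of the bar construction with the topological classifying space; these are exactly the points for which the cited work of Wagemann--Wockel and Schreiber is needed. Once they are in hand, the only genuinely geometric input is Bott's computation $\pi_3(G)\cong\bbZ^s$, everything else being Hurewicz and the universal coefficient theorem.
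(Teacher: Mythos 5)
Your proposal is correct and follows essentially the same route as the paper: reduce to discrete coefficients via the connectedness of $BG$ through degree $3$, compute $H^4(BG;\bbZ)\cong\bbZ^s$ from $\pi_3(G)$, and shift degree using the exponential sequence together with the vanishing of cohomology with vector-space coefficients. The only cosmetic difference is that you apply the exponential sequence directly to $T_0$ (absorbing the $\bbR^m$ factor into $\mathfrak t$), whereas the paper first splits off a maximal compact torus $S\subseteq T_0$ before using $\check T\to\mathfrak t\to T$; the two orderings are equivalent.
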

\begin{proof}
  We have
  $$%
    \pi_i(BG)\;=\;\pi_{i-1}(G) = 
    \begin{cases}
      0 & i\leq 3 \\
      \bbZ^s & i = 4,
    \end{cases}
  $$
  \cite[Thm.~4.17]{MimuraToda91}. Using Hurewitz and the universal
  coefficient theorem, this implies
  $$%
    H^i_{gp}(G;A)\;=\; H^i(BG;A) \;=\;
    \begin{cases}
      0 & 1\leq i\leq 3\\
      A^s & i=4
    \end{cases}
  $$
  for discrete coefficients $A$. If $T_0$ is the connected component of $0$, then
  the short exact sequence
  \begin{center}
    \begin{tikzpicture}
      \node at (0,0) [name=a, anchor=east] {$T_0$};
      \node at (2,0) [name=b] {$T$};
      \node at (4,0) [name=c, anchor=west] {$T/T_0$,};
      \draw[right hook->] (a) -- (b);
      \draw[->>] (b) -- (c);
    \end{tikzpicture}
  \end{center}
  gives isomorphisms
  $$
    H^i_{gp}(G;T_0) \;\cong\; H^i_{gp}(G;T)
  $$
  for $1\leq i\leq3$. Let $S\subseteq T_0$ be a maximal compact
  subgroup. Then $T_0$ is the product of $S$ with $\bbR^m$ for some
  $m$. By \cite[Thm.~2.8]{Hu52}, the cohomology of a compact Lie group
  with coefficients in $\bbR^m$ vanishes for $i\geq 1$. Hence the short exact sequence
  \begin{center}
    \begin{tikzpicture}
      \node at (0,0) [name=a, anchor=east] {$S$};
      \node at (2,0) [name=b] {$T_0$};
      \node at (4,0) [name=c, anchor=west] {$T_0/S$};
      \draw[right hook->] (a) -- (b);
      \draw[->>] (b) -- (c);
    \end{tikzpicture}
  \end{center}
  gives isomorphisms
  $$
    H^i_{gp}(G;S) \;\cong\; H^i_{gp}(G;T_0).
  $$
  At the same time, we have 
  $$
    \check S\;=\;\check T.
  $$
  We may therefore assume, without loss of generality, that $T$ is a
  compact torus. Finally, the long exact cohomology sequence for 
  \begin{center}
    \begin{tikzpicture}
      \node at (0,0) [name=a, anchor=east] {$\check T$};
      \node at (2,0) [name=b] {$\mathfrak t$};
      \node at (4,0) [name=c, anchor=west] {$T$,};
      \draw[right hook->] (a) -- (b);
      \draw[->>] (b) -- (c);
    \end{tikzpicture}
  \end{center}
  with $\mathfrak t$ the Lie algebra of T, gives isomorphisms
  $$
    H^i_{gp}(G;T) \;\cong\; H^{i+1}(BG;\check T).
  $$  
\end{proof}
The statements of Lemma \ref{lem:Whitehead} and Theorem
\ref{thm:Categorical_Schur_cover} (2) can be derived from this result:
\begin{proof}[Proof of the Second Whitehead Lemma]
  By Weyl's theorem, the simply connected group $\widetilde G$ is
  again compact, and by Theorem \ref{thm:Lie}, it has no non-trivial central
  extension with finite dimensional center.
\end{proof}
\begin{proof}[Proof of Theorem \ref{thm:Categorical_Schur_cover} (2)]
  We have functorial isomorphisms
  \begin{eqnarray*}
    \check T^s &\;=\;& Hom(\bbZ^s,Hom(U(1),T))\\
      &\cong &  Hom(U(1)\tensor\bbZ^s,T) \\
      & = & Hom(U(1)^s,T).
  \end{eqnarray*}
  So, Theorem \ref{thm:Lie} implies that the bicategory $\mE\!xt(G)$
  is equivalent to the category 
  of finite dimensional abelian Lie groups under $U(1)^s$.
\end{proof}
\begin{Exa}[{\cite[Thm.100]{Schommer-Pries11}}]
  For $n=3$ and $n\ge5$, the string extension
  \begin{center}
    \begin{tikzpicture}
      \node at (-0.6,0) [name=A] {$1/\!\!/U(1)$};
      \node at (2.65,0) [name=B] {$String(n)$};
      \node at (6,0) [name=C] {$Spin(n)$};
      \draw[->] (A) -- (B);
      \draw[->] (B) -- (C);
    \end{tikzpicture}
  \end{center}
  is the universal central Lie 2-group extension of the simple and
  simply connected Lie group $Spin(n)$.
\end{Exa}
\section{The cyclic groups}\label{sec:Abelian}
As a warm-up to the platonic and alternating case, we study the categorical extensions of the finite subgroups of the circle group.
The finite cyclic groups have integral homology 
  $$%
    H_i(\mn)\, =\,
    \begin{cases}
      \bbZ & \quad{i=0,}\\
      \mn & \quad{i \text{ odd, and}}\\
      0 & \quad{\text{else.}}
    \end{cases}
  $$
This implies that $\mn$ possesses a weak categorical Schur cover
$\mC_n$, whose centre is $\mn$. Let $\mU(1)^-$ be the categorical extension
of the circle group classified by the standard generator\footnote{In \cite{Ganter14}, we make the convention that the basic categorical extension of the circle group is the 2-group $\mU(1)$ classified by the other generator. These two 2-groups differ by a sign in the action.} of
$$
  H^3_{gp}(U(1);U(1))\,\cong\,H^4(BU(1);\bbZ)\,\cong\,\bbZ.
$$
We will see that there is a 1-morphism of categorical central
extensions
\begin{center}
  \begin{tikzpicture}
    \node at (2.5,0) [name=B] {$1/\!\!/\mn$};
    \node at (5.5,0) [name=C] {$\mC_n$};
    \node at (8.5,0) [name=D] {$\mn$};

    \draw[->] (B) -- (C);
    \draw[->] (C) -- (D);

    \node at (2.5,-1.7) [name=B2] {$1/\!\!/ U(1)$};
    \node at (5.5,-1.7) [name=C2] {$\mU(1)^-$};
    \node at (8.5,-1.7) [name=D2] {$U(1)$,};
    
    \draw[->] (B2) -- (C2);
    \draw[->] (C2) -- (D2);

    \draw[right hook->] (B) -- node [midway, left] {} (B2);
    \draw[right hook->] (C) -- (C2);
    \draw[right hook->] (D) -- node [midway, right] {} (D2);
  \end{tikzpicture}
\end{center}
identifying $\mC_n$ with a sub-categorical group of $\mU(1)^-$.
Let $\bbR$ act on $\bbZ\times U(1)$ by
\begin{align*}
  x\cdot (m,z) & \: := \: \left(m,z\cdot e^{-2\pi imx}\right), 
\end{align*}
and recall that
\begin{center}
  \begin{tikzpicture}
     \node at (-1.2,0) [name=A] {$\mU(1)^-$\,};
     \node at (1.8,0) [name=B] {\,
     $     \left(\!
       \raisebox{-.75cm}{\begin{tikzpicture}
         \node at (0,.6) [name=C] {$\bbR\ltimes(\bbZ\times U(1))$};
         \node at (0,-.6) [name=D] {$\bbR$};
         \draw[->] ([xshift=-6pt]C.south) -- ([xshift=-6pt]D.north);
         \draw[->] ([xshift=6pt]C.south) -- ([xshift=6pt]D.north);
       \end{tikzpicture}}\!
     \right)$
     };    
     \draw[double equal sign distance] (A) -- (B);
  \end{tikzpicture}
\end{center}
is constructed as the strict categorical group
corresponding to the crossed module
  \begin{center}
  \begin{tikzpicture}
    \node at (0,0) [name=A, anchor=east] {$\boldsymbol\upsilon :\bbZ\times U(1)$};
    \node at (1.3,0) [name=B, anchor=west] {$\bbR$};
    \draw[->] ([yshift=-1pt]A.east) -- ([yshift=-1pt]B.west);

    \node at (0,-0.7) [name=A, anchor=east] {$(m,z)$};
    \node at (1.3,-0.7) [name=B, anchor=west] {$m$,};
    \draw[|->] ([yshift=-1pt]A.east) -- ([yshift=-1pt]B.west);
  \end{tikzpicture}
  \end{center}
see \cite{Ganter14}.
In other words, $\mU(1)^-$
has as objects $\bbR$ and as arrows
$$
  \left\{x\xrightarrow{\,\,\,\,z\,\,\,\,}x+m \,\,\middle|\,\,
    x\in\bbR,\,m\in\bbZ,\,\text{ and }z\in U(1)\right\}, 
$$
composing two arrows means multiplying their labels, and the strict
monoidal structure is given by the respective group structures of
objects and arrows.
\begin{Lem}\label{lem:cyclic_Schur}
  The weak Schur cover $\mC_n$ of $\mn$ can be constructed as the
  strict categorical group corresponding to the sub-crossed module
  \begin{center}
    \begin{tikzpicture}[scale=.95]
      \node at (0,0) [name=A] {$\bbZ\times \boldsymbol\mu_n$};
      \node at (3.2,0) [name=B] {$\bbZ\times U(1)$};

      \node at (0,-2) [name=C] {$\frac1n\bbZ$};
      \node at (3.2,-2) [name=D] {$\bbR$.};

      \draw[->] (A) -- node [midway, left] {$\boldsymbol\kappa$} (C);
      \draw[right hook->] (A) -- node [midway, above] {} (B);
      \draw[right hook->] (C) -- node [midway, above] {} (D);
      \draw[->] (B) -- node [midway, right] {$\boldsymbol\upsilon$} (D);
    \end{tikzpicture}
  \end{center}
\end{Lem}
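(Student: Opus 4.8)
The plan is threefold: first to verify that the displayed square really is a sub-crossed module of $\boldsymbol\upsilon\colon\bbZ\times U(1)\to\bbR$, then to read off $\pi_0$ and $\pi_1$ of the associated strict categorical group, and finally to identify its classifying class in $H^3_{gp}(\mn;\mn)$ with the universal one. For the first step the only substantive point is that the $\bbR$-action restricts to an action of $\tfrac1n\bbZ$ on $\bbZ\times\mn$; since $x\cdot(m,\zeta)=(m,\zeta\cdot e^{-2\pi i m x})$ and $e^{-2\pi i m x}\in\mn$ whenever $x\in\tfrac1n\bbZ$, this is immediate, and $\boldsymbol\kappa$ is then just the restriction of $\boldsymbol\upsilon$ (note $m\in\bbZ\subseteq\tfrac1n\bbZ$). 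The Peiffer identity and equivariance are inherited from the ambient crossed module, so the two horizontal inclusions assemble into a morphism of crossed modules, hence into the $1$-morphism of categorical central extensions $\mC_n\to\mU(1)^-$ displayed just before the lemma.

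Second, by the Dold--Kan description (Theorem \ref{thm:DoldKan}) the strict $2$-group $\mC_n$ has objects $\tfrac1n\bbZ$ and arrows $x\xrightarrow{\ \zeta\ }x+m$ with $m\in\bbZ$, $\zeta\in\mn$. Two objects are isomorphic precisely when they differ by an element of $\bbZ$, so $\pi_0\mC_n=\tfrac1n\bbZ/\bbZ\cong\mn$, while the automorphisms of the unit object $0$ are exactly the arrows with $m=0$, giving $\pi_1\mC_n=\mn$. Thus $\mC_n$ is a categorical central extension of $\mn$ with centre $\mn\cong H_3(\mn)$, and since $H_2(\mn)=0$ it is a weak categorical Schur cover as soon as its class in $H^3_{gp}(\mn;\mn)$ is the universal one. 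By Corollary \ref{Cor:vanishing} and the universal coefficient theorem $H^3_{gp}(\mn;A)\cong\mathrm{Hom}(H_3(\mn),A)$ (the Ext-term vanishes because $H_2(\mn)=0$), so it suffices to show $[\mC_n]$ corresponds to $\mathrm{id}_{\mn}$.

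For the last step I would push the centre forward along the inclusion $j\colon\mn\hookrightarrow U(1)$. A direct inspection of the construction shows $\mC_n[j]\simeq\iota^*\mU(1)^-$, the restriction of $\mU(1)^-$ to the subgroup $\mn\subseteq U(1)=\pi_0\mU(1)^-$: absorbing the $\mn$-labels into $U(1)$-labels modulo the centre relation turns the arrow set $\bbZ\times\mn$ into $\bbZ\times U(1)$ over the same objects $\tfrac1n\bbZ$. By naturality of the universal coefficient isomorphism this gives $j_*[\mC_n]=\iota^*g$, where $g$ is the standard generator of $H^3_{gp}(U(1);U(1))\cong H^4(BU(1);\bbZ)\cong\bbZ$. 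Since $j$ is injective, $j_*=\mathrm{Hom}(H_3(\mn),j)$ is injective, so $[\mC_n]$ is determined by $\iota^*g$. Under $H^4(BU(1);\bbZ)=\bbZ\langle c^2\rangle$ one has $g=\pm c^2$, and $B\iota\colon B\mn\to BU(1)$ sends the generator $c$ to a generator $u$ of $H^2(B\mn;\bbZ)$; as $H^*(B\mn;\bbZ)\cong\bbZ[u]/(nu)$, the image $u^2$ generates $H^4(B\mn;\bbZ)\cong H^3_{gp}(\mn;U(1))\cong\mn$. Tracking this generator through the universal coefficient isomorphism identifies $\iota^*g$ with $j_*(\mathrm{id}_{\mn})$, and injectivity of $j_*$ then yields $[\mC_n]=\mathrm{id}_{\mn}=[\alpha_{uni}]$.

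I expect the main obstacle to be this last paragraph: not the fact that $\iota^*g$ generates, but pinning it down as \emph{exactly} the universal class rather than one of its $\varphi(n)$ associates. This rests on the cup-product structure of $H^*(B\mn;\bbZ)$ together with the sign/orientation conventions fixing the ``standard'' generator (the $\mU(1)$ versus $\mU(1)^-$ issue of the footnote); making these compatible is the delicate bookkeeping, whereas the restriction-of-action check and the homotopy-group count are routine.
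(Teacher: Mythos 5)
Your proof is correct and follows essentially the same route as the paper: identify $\mC_n[j]$ with the restriction of $\mU(1)^-$ to $\boldsymbol\mu_n$ and check that the restriction map $H^3_{gp}(U(1);U(1))\to H^3_{gp}(\boldsymbol\mu_n;U(1))$ carries the standard generator to a generator. The only substantive difference is that you compute $Bi^*$ on $H^4$ from the cup-product structure $H^*(B\boldsymbol\mu_n;\bbZ)\cong\bbZ[u]/(nu)$ where the paper reads it off as the edge homomorphism of the Leray--Serre spectral sequence of $\bbS^1\to L_n^\infty\to\bbC P^\infty$ (followed in both cases by the same change-of-coefficients bookkeeping), and the ambiguity you flag at the end is harmless, since any isomorphism $H_3(\boldsymbol\mu_n)\to\boldsymbol\mu_n$ classifies a weak categorical Schur cover.
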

\begin{proof}
The circle group $U(1)$ acts by multiplication on the
 spheres $\bbS^{2k-1}\subset\bbC^k$, and on $$\bbS^\infty \,=\,
 colim_k\bbS^{2k-1}.$$
We have 
$$
\begin{array}{rcccl}
  BU(1) & \;\simeq\; & \bbS^\infty/U(1) & \;=\; & \bbC P^\infty \\[4pt]
  B\boldsymbol\mu_n & \;\simeq\; & \bbS^\infty/\boldsymbol\mu_n & \;=\; & L_n^\infty
\end{array}
$$
(infinite dimensional lens space).
Let
$i\negmedspace : \mn\hookrightarrow U(1)$
be the inclusion map.
Then 
\tlongmap{Bi}{B\boldsymbol\mu_n}{BU(1)}
is identified with the quotient map 
\begin{center}
  \begin{tikzpicture}
    \node at (0,0) [anchor=east, name=a] {$L_n^\infty$};
    \node at (1,0) [anchor=west, name=b] {$\bbC P^\infty$.};
    \draw[->] (a) -- (b);
  \end{tikzpicture}
\end{center}
This is a fibration with fiber
$\bbS^1\cong U(1)/\boldsymbol\mu_n$. Its homology Leray-Serre
spectral sequence has $E^2$-term 
  \begin{center}
    \begin{tikzpicture}[scale=.8]

    \draw[very thin,color=gray!20](-.6,-.6)grid(7.9,2.9);
    \draw[->, very thick](-.6,0)--(8.2,0) node[right] {$\bbC P^\infty$};
    \draw[->, very thick](0,-.6)--(0,3.2) node[above]
    {$\bbS^1$};

    \node at (.5,.5) {$\bbZ$};
    \node at (2.5,.5) [name=b1] {$\bbZ$};
    \node at (4.5,.5) [name=b2] {$\bbZ$};
    \node at (6.5,.5) [name=b3] {$\bbZ$};

    \node at (.5,1.5) [name=a1] {$\bbZ$};
    \node at (2.5,1.5) [name=a2] {$\bbZ$};
    \node at (4.5,1.5) [name=a3] {$\bbZ$};
    \node at (6.5,1.5) {$\bbZ$};

    \draw[red!65!black, thick, ->] (b1) -- node [pos=.37, above]
    {$n\cdot$} (a1);
    \draw[red!65!black, thick, ->] (b2) -- node [pos=.37, above]
    {$n\cdot$} (a2);
    \draw[red!65!black, thick, ->] (b3) -- node [pos=.37, above]
    {$n\cdot$} (a3);
  \end{tikzpicture}
  \end{center}
For cohomology with coefficients in $A$, we obtain an $E_2$-term of the form
  \begin{center}
  \begin{tikzpicture}[scale=.8]

    \draw[very thin,color=gray!20](-.6,-.6)grid(7.9,2.9);
    \draw[->, very thick](-.6,0)--(8.2,0) node[right] {$\bbC P^\infty$};
    \draw[->, very thick](0,-.6)--(0,3.2) node[above]
    {$\bbS^1$};

    \node at (.5,.5) {$A$};
    \node at (2.5,.5) [name=b1] {$A$};
    \node at (4.5,.5) [name=b2] {$A$};
    \node at (6.5,.5) [name=b3] {$A$};

    \node at (.5,1.5) [name=a1] {$A$};
    \node at (2.5,1.5) [name=a2] {$A$};
    \node at (4.5,1.5) [name=a3] {$A$};
    \node at (6.5,1.5) {$A$};

    \draw[red!65!black, thick, ->] (a1) -- node [pos=.6, above]
    {$d_2$} (b1);
    \draw[red!65!black, thick, ->] (a2) -- node [pos=.6, above]
    {$d_2$} (b2);
    \draw[red!65!black, thick, ->] (a3) -- node [pos=.6, above]
    {$d_2$} (b3);
  \end{tikzpicture}
  \end{center}
  with 
  $$
    d_2(a) = \underbrace{a+\dots+a}_{\text{$n$ times}}.
  $$
  These spectral sequences collapse to give the familiar minimal
  resolutions
  \begin{center}
    \begin{tikzpicture}[scale=1.3]
      \node at (0,0) [name=a] {$\bbZ$};
      \node at (1.5,0) [name=b] {$\bbZ$};
      \node at (3,0) [name=c] {$\bbZ$};
      \node at (4.5,0) [name=d] {$\bbZ$};
      \node at (6,0) [name=e] {$\bbZ$};
      \node at (7.5,0) [name=f] {$\dots$};
      \draw[->] (f) -- (e);
      \draw[->] (e) -- node [midway, above] {$n\cdot$} (d);
      \draw[->] (d) -- node [midway, above] {$0$} (c);
      \draw[->] (c) -- node [midway, above] {$n\cdot$} (b);
      \draw[->] (b) -- node [midway, above] {$0$} (a);
    \end{tikzpicture}
  \end{center}
and 
  \begin{center}
    \begin{tikzpicture}[scale=1.3]
      \node at (0,0) [name=a] {$A$};
      \node at (1.5,0) [name=b] {$A$};
      \node at (3,0) [name=c] {$A$};
      \node at (4.5,0) [name=d] {$A$};
      \node at (6,0) [name=e] {$A$};
      \node at (7.5,0) [name=f] {$\dots$};
      \draw[->] (e) -- (f);
      \draw[->] (d) -- node [midway, above] {$n\cdot$} (e);
      \draw[->] (c) -- node [midway, above] {$0$} (d);
      \draw[->] (b) -- node [midway, above] {$n\cdot$} (c);
      \draw[->] (a) -- node [midway, above] {$0$} (b);
    \end{tikzpicture}
  \end{center}
We claim that we have a communting diagram
\begin{center}
  \begin{tikzpicture}
    \node at (-1,0) [name=z] {$\bbZ$};
    \node at (2,0) [name=a] {$H^4(\bbC P^\infty;\bbZ)$};
    \node at (6,0) [name=c] {$H^3_{gp}(U(1);U(1))$};
    \node at (-1,-2) [name=z2] {$\bbZ/n\bbZ$};
    \node at (2,-2) [name=a2] {$H^4(L_n^\infty;\bbZ)$};
    \node at (6,-2) [name=c2] {$H^3(\boldsymbol\mu_n;U(1))$};
    \node at (6,-4) [name=c3] {$Hom(\boldsymbol\mu_n,U(1))$};
    \node at (-1,-4) [name=z3] {$\boldsymbol\mu_n$};
    \node at (10.5,-2) [name=d2]
      {$H^3(\boldsymbol\mu_n;\boldsymbol\mu_n)$};
    \node at (10.5,-4) [name=d3]
      {$Hom(\boldsymbol\mu_n;\boldsymbol\mu_n)$,}; 

    \draw[double equal sign distance] (z) -- (a);
    \draw[double equal sign distance] (z2) -- (a2);

    \draw[double equal sign distance] (c) -- node [midway, above] {$\sim$} (a);
    \draw[double equal sign distance] (c2) -- node [midway, above]
    {$\sim$} (a2);

    \draw[->] (z) -- node [midway, left] {$q$} (z2);
    \draw[->] (a) -- node [midway, right] {$Bi^*$} (a2);
    \draw[->] (d2) -- node [midway, above] {$\cong$} (c2);
    \draw[->] (c2) -- node [midway, right] {$\cong$} (c3);
    \draw[->] (d2) -- node [midway, right] {$\cong$} (d3);
    \draw[double equal sign distance] (c3) -- (d3);
    \draw[->] (c) -- node [midway, right] {$i^*$} (c2);
    \draw[->] (z2) -- node [midway, left] {$\cong$} (z3);
    \draw[double equal sign distance] (c3) -- (z3);
  \end{tikzpicture}
\end{center}
where $q$ is the quotient map, and the equal signs refer to the
standard identifications.
The commutativite of the top left square follows from the fact that
$Bi^*$ is the edge homomorphism in our cohomology spectral sequence.
The commutativity of the bottom left square is a diagram
chase, involving the minimal resolutions for cohomology with
coefficients in $\bbZ$, $\bbR$ and $U(1)$.
It follows that the restriction of $\mU(1)^-$ to $\mn$ is
a choice of $\mC_n[i]$. This categorical group $\mC_n[i]$ with center
$U(1)$ associated to $\mC_n$ determines $\mC_n$ up to equivalence. It
follows that the categorical group associated to $\boldsymbol\kappa$
is a choice of $\mC_n$.
\end{proof}
There is an alternative description of the
categorical group $\mC_n$.
For $a\in\frac1n\bbZ$, we write
$$
  a = \lfloor a\rfloor + a',
$$
where the Gau{\ss}
bracket $\lfloor a\rfloor$ denotes the largest
integer less than or equal to $a$.
\begin{Def}[{\cite{HuangLiuYe14},
  \cite[Sec.3, p.49]{JoyalStreet93}}]
  Let $\mC'_n$ be the skelettal 2-group constructed from the
  $\boldsymbol\mu_n$-valued 3-cocycle
  $$
    \alpha(a|b|c)\;:=\; \exp(\lfloor a'+b'\rfloor c') \; =  \;  \exp(\lfloor
    a'+b'\rfloor c)
  $$
  on $\frac1n\bbZ/\bbZ$.
\end{Def}
\begin{Lem}
  We have an equivalence of 2-groups between $\mC_n'$ and $\mC_n$.
\end{Lem}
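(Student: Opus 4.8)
The plan is to recognise $\mC_n'$ as the skeletalisation of the strict $2$-group $\mC_n$ and to match the resulting associator with the cocycle $\alpha$. Both $\mC_n$ and $\mC_n'$ are categorical central extensions of $\mn$ by $\mn$: for $\mC_n$ one reads off from the crossed module $\boldsymbol\kappa$ that $\pi_0=\frac1n\bbZ/\bbZ\cong\mn$, that $\pi_1=\ker\boldsymbol\kappa=\{0\}\times\mn\cong\mn$, and that the conjugation action on the centre is trivial, since $x\triangleright(0,z)=(0,z)$. By part (3) of Corollary \ref{Cor:vanishing} it therefore suffices to show that the two extensions define the same class in $H^3_{gp}(\mn;\mn)$.

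First I would skeletalise $\mC_n$. Choose the section $s$ of $\frac1n\bbZ\twoheadrightarrow\frac1n\bbZ/\bbZ$ sending a class to its representative $a'\in[0,1)$, and correct the failure of $s$ to be additive by the Gau{\ss} bracket: set $t(\bar a,\bar b)=\bigl(\lfloor a'+b'\rfloor,\,1\bigr)\in\bbZ\times\mn$, so that $\boldsymbol\kappa\,t(\bar a,\bar b)=s(\bar a)+s(\bar b)-s(\overline{a+b})$. The associator of the skeletal model is then the usual expression
$$
  a(\bar a|\bar b|\bar c)\;=\;t(\bar a,\bar b)+t(\overline{a+b},\bar c)-\bigl(s(\bar a)\triangleright t(\bar b,\bar c)\bigr)-t(\bar a,\overline{b+c}),
$$
which lands in $\ker\boldsymbol\kappa=\mn$ because the four Gau{\ss} brackets in the first coordinate cancel (all four reduce to $\lfloor a'+b'+c'\rfloor$). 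The only nontrivial contribution to the $\mn$-coordinate comes from the action $x\triangleright(m,z)=(m,z\,e^{-2\pi i m x})$ acting on $t(\bar b,\bar c)$, so I compute
$$
  a(\bar a|\bar b|\bar c)\;=\;\exp\!\bigl(2\pi i\,a'\,\lfloor b'+c'\rfloor\bigr).
$$

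It remains to compare this with $\alpha(a|b|c)=\exp\!\bigl(2\pi i\,\lfloor a'+b'\rfloor c'\bigr)$. These two cocycles are cohomologous: taking the normalised $2$-cochain $\eta(\bar a,\bar b)=a'b'$, a direct expansion of $d\eta(\bar a|\bar b|\bar c)=\eta(\bar b,\bar c)-\eta(\overline{a+b},\bar c)+\eta(\bar a,\overline{b+c})-\eta(\bar a,\bar b)$ gives $\lfloor a'+b'\rfloor c'-a'\lfloor b'+c'\rfloor$, so that $\alpha=a\cdot d\bigl(\exp(2\pi i\,\eta)\bigr)$ in $Z^3_{gp}(\mn;\mn)$. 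Hence $[\alpha]=[a]$ in $H^3_{gp}(\mn;\mn)$, and by Corollary \ref{Cor:vanishing}(3) the extensions $\mC_n'$ and $\mC_n$ are equivalent.

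The main obstacle is the bookkeeping in the middle step: extracting the associator from the crossed module with the correct signs, and in particular tracking how the nontrivial action $x\triangleright(m,z)=(m,z\,e^{-2\pi i mx})$ feeds the fractional part $a'$ into the exponent. Depending on one's conventions for the direction of morphisms and of the action, one lands on either $\exp(2\pi i\,a'\lfloor b'+c'\rfloor)$ or directly on $\alpha$; the coboundary $\eta(\bar a,\bar b)=a'b'$ reconciles the two once and for all. Alternatively one can bypass cohomology and exhibit the equivalence by hand: the functor $\mC_n'\to\mC_n$ sending $a$ to $a'$ and an automorphism $z$ of $a$ to $(0,z)\negmedspace:a'\to a'$, equipped with the tensorator $t(\bar a,\bar b)\negmedspace:(a'+b')'\to a'+b'$, is monoidal precisely because its coherence hexagon reduces to the cocycle relation just verified.
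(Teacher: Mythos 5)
Your route is genuinely different from the paper's. The paper simply exhibits the monoidal functor $F([a])=a'$, $F([a],z)=(a',0,z)$, with tensorator $(a'+b',-\lfloor a'+b'\rfloor,1)\colon F([a])+F([b])\to F([a]+[b])$, and leaves the coherence check to the reader; this is exactly the ``by hand'' functor of your closing paragraph, up to the direction of the tensorator. Your main argument instead skeletalises the crossed module and compares cohomology classes via Corollary \ref{Cor:vanishing}(3). The associator you extract, $\exp(2\pi i\,a'\lfloor b'+c'\rfloor)$, is correct: the integer coordinate vanishes because $\lfloor (a+b)'+c'\rfloor=\lfloor a'+b'+c'\rfloor-\lfloor a'+b'\rfloor$ and likewise for the other middle term (your parenthetical that all four brackets ``reduce to $\lfloor a'+b'+c'\rfloor$'' is not literally what happens, but the cancellation is right), and the $\mn$-coordinate comes only from $s(\bar a)\triangleright t(\bar b,\bar c)$ as you say. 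Your approach buys a conceptual statement -- the two extensions have the same class -- at the cost of the coboundary computation, and it is the one that generalises; the paper's is shorter but leaves the pentagon unverified on the page.

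There is, however, one genuine gap. The cochain $\eta(\bar a,\bar b)=a'b'$ lies in $\frac1{n^2}\bbZ$, so $\exp(2\pi i\eta)$ takes values in $\boldsymbol\mu_{n^2}$, not in $\mn$; the identity $\alpha=a\cdot d(\exp(2\pi i\eta))$ therefore does not hold in $Z^3_{gp}(\mn;\mn)$ as you assert, and a priori only proves $[\alpha]=[a]$ in $H^3_{gp}(\mn;U(1))$. The repair is short: since $H_2(\mn)=0$, the universal coefficient theorem gives $H^3_{gp}(\mn;A)\cong Hom(H_3(\mn),A)$ naturally in $A$, so the map $H^3_{gp}(\mn;\mn)\to H^3_{gp}(\mn;U(1))$ induced by the inclusion is injective, and equality of the classes of the two $\mn$-valued cocycles does follow. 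You should add this line. I would also not leave the sign to ``conventions'': with the paper's action $x\cdot(m,z)=(m,ze^{-2\pi imx})$ and the tensorator oriented as in the paper the classes agree on the nose; if a convention flipped it you would get $[\alpha]=-[a]$, which still yields an equivalence of 2-groups (compose with inversion on the centre) but not of central extensions, so the check is worth recording once.
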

\begin{proof}
  We define a monoidal equivalence from $\mC_n'$ to $\mC_n$.
  On objects, we let $F$ be the map
  \teqnarray{F}{\frac1n\bbZ/\bbZ}{\frac1n\bbZ}{[a]}{a',}
  and on arrows, we let $F$ be the map
  \teqnarray{F}{\left(\frac1n\bbZ/\bbZ\right)\times\boldsymbol\mu_n}
    {\frac1n\bbZ\ltimes(\bbZ\times\boldsymbol\mu_n)}{([a],z)}{(a',0,z).}
  We then define the natural transformation
  \tlongmap{\phi}{F([a])+F([b])}
    {F([a]+[b])}
  given by the arrows
  $$(a'+b',-\lfloor a'+b'\rfloor,1)$$
  in $\mC_n$. It is elementary to check that $(F,\phi)$ is indeed a
  monoidal equivalence.
\end{proof}
\section{Platonic 2-groups}\label{sec:Platonic}
The previous
section will serve as blueprint for our discussion of the Platonic
2-groups. Let $G$ be a finite subgroup of the three sphere. 
It is well known\footnote{Periodicity is a theorem by Artin and Tate
  \cite{ArtinTate68}, 
  the full statement is a combination of
\cite[XII.2(4),XII.11.1, XVI.9 Application 4]{CartanEilenberg99}. 
See also \cite[Cor. 3.1]{FengHananyHePrezas04} for a direct
proof (following Schur) that the Schur multiplier vanishes 
and \cite{TomodaZvengrowski08} for an explicit resolution and a
description of the product structure in cohomology.}
that the (co)homology of $G$ is periodic with period $4$,
with the reduced integral homology concentrated in odd
degrees, 
$$%
  H_i(G)\,=\,
  \begin{cases}
    \bbZ & \text{if } \,i=0,\\
    G^{ab} & \text{if } \,i\equiv 1\mod 4,\\
    \mg & \text{if } \,i\equiv 3\mod 4,\text{ and}\\
    0 & \text{if $i>0$ is even},
  \end{cases}
$$
and the integral cohomology concentrated in even degrees,
$$%
  H^i(G)\,=\,
  \begin{cases}
    \bbZ & \text{if } \,i=0,\\
    G^{ab} & \text{if } \,i\equiv 2\mod 4,\\
    \mg & \text{if } \,i\text{ is a positive multiple of }4,\text{ and}\\
     0 & \text{else.}
  \end{cases}
$$
In particular, $G$ possesses a weak categorical Schur cover with
center $\mg$. The following proposition shows that $\mG_{uni}$ can be realized as a
sub-categorical group of the third String 2-group.
\begin{Prop}
  The restriction of $String(3)$ to $G$ is equivalent to the
  categorical group with center $U(1)$ associated to $\mG_{uni}$ via
  the canonical inclusion $i\negmedspace:\mg\hookrightarrow U(1)$,
  $$
    String(3)\arrowvert_G\,\simeq\,\mG_{uni}[i].
  $$
\end{Prop}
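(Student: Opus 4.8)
The plan is to reduce the asserted equivalence to an equality of classes in $H^3_{gp}(G;U(1))$ and then to compute a restriction map by a Serre spectral sequence argument that runs exactly parallel to the proof of Lemma~\ref{lem:cyclic_Schur}. By Theorem~\ref{thm:DoldKan} and Corollary~\ref{Cor:vanishing}, a categorical central extension of $G$ with center $U(1)$ is determined up to equivalence by its class in $H^3_{gp}(G;U(1))$, so it is enough to compare the two classifying cohomology classes. By Theorem~\ref{thm:Lie} (as in the String example), $String(3)$ is classified by a generator $g$ of $H^3_{gp}(Spin(3);U(1))\cong H^4(BSpin(3);\bbZ)\cong\bbZ$, whence $String(3)|_G$ is classified by $\iota^*g$, where $\iota\colon G\hookrightarrow Spin(3)$ is the inclusion. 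On the other hand $\mG_{uni}[i]$ is classified by $i_*\alpha_{uni}$, which under the universal coefficient isomorphism $H^3_{gp}(G;U(1))\cong Hom(H_3(G),U(1))$ is the canonical inclusion $\boldsymbol\mu_{|G|}=H_3(G)\hookrightarrow U(1)$. Thus the whole proposition comes down to the single identity $\iota^*g=i_*\alpha_{uni}$.

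To compute $\iota^*$ I would use the fibration $\bbS^3/G\to BG\xrightarrow{B\iota}BSpin(3)$, whose fiber is the homotopy fiber $Spin(3)/G=\bbS^3/G$, a spherical space form. This is the precise analogue of the fibration $\bbS^1\to L_n^\infty\to\bbC P^\infty$ of the cyclic case. Since $Spin(3)=\bbS^3=Sp(1)$ we have $BSpin(3)\simeq\bbH P^\infty$, so that $H^*(BSpin(3);\bbZ)=\bbZ[c]$ with $|c|=4$ and the groups $H^p(BSpin(3);M)$ equal $M$ for $p\equiv0\bmod4$ and vanish otherwise; and since the fiber is a closed oriented $3$-manifold, $H^*(\bbS^3/G;\bbZ)$ is concentrated in degrees $0,2,3$ with $H^3(\bbS^3/G)\cong\bbZ$. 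An inspection of the $E_2$-page shows that the only differential touching the corner entries $E_2^{0,3}$ and $E_2^{4,0}$ is the transgression $d_4\colon H^3(\bbS^3/G)\to H^4(BSpin(3))$, and that the edge homomorphism $E_2^{4,0}=H^4(BSpin(3))\twoheadrightarrow E_\infty^{4,0}\hookrightarrow H^4(BG)$ is exactly $\iota^*$.

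The decisive point, replacing the elementary computation $d_2=n\cdot$ of the cyclic case, is that this transgression is multiplication by $\pm|G|$. I would establish this by comparison with the path-loop fibration $\bbS^3\to ESpin(3)\to BSpin(3)$. Because the finite group $G\subset Spin(3)$ acts freely on the contractible space $ESpin(3)$, the quotient $q\colon ESpin(3)\to ESpin(3)/G=BG$ is a map of fibrations over $BSpin(3)$ restricting on fibers to the $|G|$-fold covering $\pi\colon\bbS^3\to\bbS^3/G$. As $\pi$ is an orientation-preserving covering of degree $|G|$, one has $\pi_*[\bbS^3]=|G|\cdot[\bbS^3/G]$, so $\pi^*$ is multiplication by $|G|$ on $H^3$; and because $ESpin(3)$ is contractible, the transgression of this fibration is an isomorphism $\bbZ\xrightarrow{\sim}\bbZ$. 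Naturality of the transgression under $q^*$ then forces $d_4=\pm|G|$. It follows that $E_\infty^{4,0}=\bbZ/|G|\cong\boldsymbol\mu_{|G|}$ and that the edge map $\iota^*\colon\bbZ\to\boldsymbol\mu_{|G|}$ is reduction modulo $|G|$; in particular $\iota^*g$ is a generator of $H^4(BG;\bbZ)\cong H^3_{gp}(G;U(1))$, of exact order $|G|$. This already delivers the second assertion of Theorem~\ref{thm:24}.

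It remains to identify this generator with $i_*\alpha_{uni}$ on the nose. Here I would, as in the cyclic case, assemble a commutative diagram comparing the Bockstein isomorphisms $H^3_{gp}(-;U(1))\cong H^4(B-;\bbZ)$ attached to $\bbZ\to\bbR\to U(1)$ with the universal coefficient isomorphisms, and chase the standard generator of $H^4(BSpin(3);\bbZ)$ through the reduction map, using that under $Ext(H_3(G),\bbZ)\cong Hom(H_3(G),U(1))$ the class of $1\bmod|G|$ corresponds to the canonical inclusion $\boldsymbol\mu_{|G|}\hookrightarrow U(1)$. I expect this last piece of book-keeping --- pinning down the exact generator, including signs and orientation conventions (compare the footnote distinguishing $\mU(1)$ from $\mU(1)^-$) --- to be the main obstacle, the transgression computation being the essential structural input. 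Once the two classes are seen to coincide, Corollary~\ref{Cor:vanishing} yields the equivalence $String(3)|_G\simeq\mG_{uni}[i]$.
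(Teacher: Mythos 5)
Your proposal is correct and follows essentially the same route as the paper: the Leray--Serre spectral sequence of the fibration $\bbS^3/G\to BG\to\bbH P^\infty$ with the transgression $d_4$ equal to multiplication by $|G|$, followed by the same diagram chase as in the cyclic case (Lemma \ref{lem:cyclic_Schur}) to match the restricted generator of $H^4(BSpin(3);\bbZ)$ with $i_*\alpha_{uni}$. Your explicit justification of $d_4=\pm|G|$ by comparison with the universal bundle $\bbS^3\to ESpin(3)\to BSpin(3)$ is a welcome supplement to the paper, which asserts this differential without proof.
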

\begin{proof}
We follow the argument in the proof of Lemma \ref{lem:cyclic_Schur}, with the  
difference that the circle
group of complex units, $U(1)\subset\bbC$, is replaced by the three
sphere of unit quaternions, $\bbS^3\subset\bbH$.  
Viewing
$$\bbS^\infty = {colim} \:\bbS^{4n-1}$$ as the colimit of the
  spheres in $\bbH^n$, we have
  $$%
  \begin{array}{rcccl}
    B\bbS^3& \simeq& \bbS^\infty\,/\,\bbS^3 & =& \bbH P^\infty,\quad\quad\text{and}\\[5pt]
    BG&  \simeq  &  \bbS^\infty\,/\,G. &&
  \end{array}
  $$
  If $j$ is the inclusion of $G$ in $\bbS^3$, then $Bj$ becomes
  the fibration 
  \begin{center}
    \begin{tikzpicture}
      \node at (2,0) [name=A] {$\bbS^3/G$};
      \node at (4.5,0) [name=B] {$BG$};
      \draw [right hook->] (A) -- node [midway, above] {} (B);
      \node at (4.5,-2) [name=D] {$\bbH P^\infty$.};
      \draw [->>] (B) -- node [midway,right] {$Bj$} (D);
    \end{tikzpicture}
  \end{center}
  The fibre is the spherical three manifold
  $\bbS^3/G$ and, in particular,
  connected and oriented.
  We get the following picture of
  the Leray-Serre spectral sequence for integral homology.
  \begin{center}
  \begin{tikzpicture}[scale=.8]

    \draw[very thin,color=gray!20](-.6,-.6)grid(10.2,5.2);
    \draw[->, very thick](-.6,0)--(10.2,0) node[right] {$\bbH P^\infty$};
    \draw[->, very thick](0,-.6)--(0,5.2) node[above]
    {$\bbS^3\,/\,G$};

    \node at (.5,.5) {$\bbZ$};
    \node at (4.5,.5) [name=b1] {$\bbZ$};
    \node at (8.5,.5) [name=b3] {$\bbZ$};

    \node at (.5,1.5) {$\,\,G^{ab}$};
    \node at (4.5,1.5)  {$\,\,G^{ab}$};
    \node at (8.5,1.5)  {$\,\,G^{ab}$};

    \node at (.5,2.5) {};
    \node at (4.5,2.5) {};
    \node at (8.5,2.5) {};

    \node at (.5,3.5) [name=a1] {$\bbZ$};
    \node at (4.5,3.5) [name=a3] {$\bbZ$};
    \node at (8.5,3.5) {$\bbZ$};

    \draw[red!65!black, thick, <-] (a1) -- node [pos=.51, above]
    {$\,\,n\cdot$} (b1);
    \draw[red!65!black, thick, <-] (a3) -- node [pos=.51, above]
    {$\,\,n\cdot$} (b3);
  \end{tikzpicture}
  \end{center}
  The only non-trivial differential $d_4$ is multiplication by $n=|G|$.
  The remainder of the proof is identical to that of Lemma \ref{lem:cyclic_Schur}
\end{proof}
\begin{Rem}
  The spherical three manifolds turning up as fibres in the above
  proof have been the object of intense study. For instance,
  if $G$ is the binary icosahedral group, then the space
  $\bbS^3/G$ is 
  the exotic homology 3-sphere of Poincar\'e.
\end{Rem}
\begin{Rem}
  In the abelian case, where
  $G$ is a finite cyclic subgroup of $\bbS^3$,
  the inclusion $j$ factors through a maximal torus,
\begin{center}
  \begin{tikzpicture}
    \node at (0,0) [name=A] {$G$};
    \node at (2,.8) [name=B] {$\bbS^1$};
    \node at (4,0) [name=C] {$\bbS^3$,};
    \draw[right hook->] ([yshift=-1pt]A.east) -- node [midway, below] {$j$}
      ([yshift=-1pt]C.west);

    \draw[right hook->] ([yshift=-1pt]A.north east) -- node [midway, above] {$i$}
      ([yshift=-1pt]B.west);

    \draw[right hook->] ([yshift=-1pt]B.east) -- node [midway, above] {$k$}
      ([yshift=-1pt]C.north west);
  \end{tikzpicture}
\end{center}
and the commuting diagram
\begin{center}
  \begin{tikzpicture}
    \node at (0,1.4) [name=X] {$H^3_{gp}\left(\bbS^3;U(1)\right)$};
    \node at (4.5,1.4) [name=Y] {$H^3_{gp}\left(\bbS^1;U(1)\right)$};
    \draw[->] (X) -- node [midway, above] {$k^*$} (Y);

    \node at (0,-0.5) [name=A] {$H^*\left(\bbH P^\infty;\bbZ\right)$};
    \node at (4.5,-0.5) [name=B] {$H^*\left(\bbC P^\infty;\bbZ\right)$};
    \draw [->] (A) -- node [midway, above] {$(Bk)^*$} (B);
    \node at (-3,-2.4) {$|v|=4$};
    \node at (7.5,-2.4) {$|x|=2$};
    \node at (0,-2.4) [name=C] {$\bbZ[\![v]\!]$};
    \node at (4.5,-2.4) [name=D] {$\bbZ[\![x]\!]$};
    \draw [->] (C) -- (D);
    \draw [double equal sign distance] (A) -- (C);
    \draw [double equal sign distance] (B) -- (D);
    \node at (0,-3.2) [name=E] {$v$};
    \node at (4.5,-3.2) [name=F] {$x^2$};
    \draw [|->] (E) -- (F);
    \draw [double equal sign distance] (A) -- (X);
    \draw [double equal sign distance] (B) -- (Y);
  \end{tikzpicture}
\end{center}
identifies the
restriction of $String(3)$ to $\bbS^1$ with the categorical
group $\mU(1)^-$ of the previous section. 
\end{Rem}

\section{The string covers of the alternating groups}
\label{sec:Whitehead}
Let $S_n$ be the symmetric group on $n$ elements, and let $\varrho_n$ be
its permutation representation. The alternating
group $A_n\subset S_n$ is the subgroup of even permutations. We will write
$\widetilde A_n$ for its spin double cover.
In this section, we will introduce a family of categorical groups
$\mathscr A_n$, fitting into commuting diagrams
\begin{center}
  \begin{tikzpicture}

    \node at (-1.5,0) [name=A,green!50!black] {$S_n$};
    \node at (-1.5,1.5) [name=A2,green!50!black] {$A_n$};
    \node at (-1.5,3) [name=A3,green!50!black] {$\widetilde A_n$};
    \node at (-1.5,4.5) [name=A4,green!50!black] {$\mathscr A_n$};
    \node at (-1.5,6) [name=A5,green!50!black] {$1/\!\!/\pi_3(\bbS^0)$};

    \node at (2.5,0) [name=B,blue!70!black] {$O(n)$.};
    \node at (2.5,1.5) [name=B2,blue!70!black] {$SO(n)$};
    \node at (2.5,3) [name=B3,blue!70!black] {$Spin(n)$};
    \node at (2.5,4.5) [name=B4,blue!70!black] {$String(n)$};
    \node at (2.5,6) [name=B5,blue!70!black] {$1/\!\!/U(1)$};

    \draw[>->,green!50!black] (A5) -- (A4);
    \draw[>->,blue!70!black] (B5) -- (B4);

    \draw[right hook->] (A) -- node [midway, above] {$\varrho_n$} (B);
    \draw[right hook->] (A2) -- node [midway, above] {$\bar\varrho_n$} (B2);
    \draw[right hook->] (A3) -- node [midway, above] {$\widetilde\varrho_n$} (B3);
    \draw[right hook->] (A4) -- (B4);
    \draw[right hook->] (A5) -- node [midway, above] {$e$} (B5);

    \draw[->>,green!50!black] (A3) -- node [midway, left]
    {$\sigma_n$\,} (A2);
    \draw[->>,green!50!black] (A4) -- (A3);
    \draw[right hook->,green!50!black] (A2) -- node [midway, left] {$\iota_n$\,} (A);

    \draw[->>,blue!70!black] (B3) -- node [midway, right]
    {$\,\kappa_n$} (B2);
    \draw[->>,blue!70!black] (B4) -- (B3);
    \draw[right hook->,blue!70!black] (B2) -- node [midway, right]
    {$\,\varepsilon_n$} (B);
  \end{tikzpicture}
\end{center}
Here $e$ is the Adams $e$-invariant, the arrows with Greek names are
the canonical maps, and in each tower, the top two 
vertical arrows describe a categorical central extension.
These $\mathscr A_n$ are characterized, up to equivalence, by
$$
  \mathscr A_n[e] \,\simeq\, String(n)\arrowvert_{\widetilde A_n}.
$$
\begin{Def}
  We will refer to categorical groups $\mathscr A_n$ as above as the
  {\em string covers of the 
  alternating groups} or simply as the {\em alternating 2-groups}.  
\end{Def}
\subsection{The Whitehead tower of the plus construction}
  The content of this section is folclore, see for instance the
  {\em Mathoverflow} discussion {\em Plus construction considerations}.
  Let $X$ be a connected CW-complex with basepoint, 
  whose fundamental group
  has perfect commutator subgroup $$P=[\pi_1(X),\pi_1(X)].$$
  Let 
  \tlongmap pX{X^+}be a homology isomorphism
  such that 
  \begin{equation}
    \label{eq:plus}
    p_*(P)\,=\,0\,\subseteq\,\pi_1\left(X^+\right).
  \end{equation}
  These conditions are satisfied if and only if the map $p$ is
  universal, in the homotopy category, with respect to the property \eqref{eq:plus}.
  This universal property of the plus construction determines $X^+$ up
  to unique isomorphism in the homotopy category. 
  We use the notation $X^+$ whenever
  the above conditions are satisfied, even when we are working in the
  strict category.
  Given $p$ as above, we may pull back the Whitehead tower of $X^+$ to a tower of
  fibrations over $X$,
  \begin{center}
    \begin{tikzpicture}
      \node at (1.8,0) [name=B] {$X^+$};    
      \node at (3,0) [name=A] {$W_1$};    
      \node at (5.5,0) [name=W1] {$W_2$};    
      \node at (8,0) [name=W2] {$W_3$};
      \node at (10,0) [name=D] {$\dots$};
      \draw[->>] (W1) -- (A);
      \draw[->>] (W2) -- (W1);
      \draw[->>] (D) -- (W2);    

      \node at (1.8,1.8) [name=B2] {$X$};    
      \node at (3,1.8) [name=A2] {$X_1$};    
      \node at (5.5,1.8) [name=W12] {$X_2$};    
      \node at (8,1.8) [name=W22] {$X_3$};
      \node at (10,1.8) [name=D2] {$\dots$};
      \draw[->>] (W12) -- node [above] {$\xi_1$} (A2);
      \draw[->>] (D2) -- (W22);
      \draw[->>] (W22) -- node [above] {$\xi_2$} (W12);
      \draw[->] (B2) -- node [midway, left] {$p$} (B);
      \draw[->] (W12) -- node [midway, right] {$p_2$} (W1);
      \draw[->] (A2) -- node [midway, right] {$p_1$} (A);
      \draw[->] (W22) -- node [midway, right] {$p_3$} (W2);
      \draw[double equal sign distance] (A) -- (B);
      \draw[double equal sign distance] (A2) -- (B2);
    \end{tikzpicture}
  \end{center}
  Using the Lerray-Serre spectral sequence, one shows inductively that
  the $p_i$ are homology isomorphisms. So, the fundamental
  group of $X_i$ is perfect for $i>1$, and
  \tlongmap{p_i}{X_i}{W_i}satisfies the universal property for its plus construction.
  In particular, $W_i$ is a choice for $X_i^+$, and
  the homology of the tower $X_\bullet$ encodes the homotopy groups of the plus
  construction of $X$. More precisely, 
  $$%
    \widetilde H_i(X_j) \,=\,
    \begin{cases}
      0& \quad\text{if } i< j,\\
      \pi_i(X^+) &\quad \text{if } i=j.
    \end{cases}
  $$
  It is possible to construct the tower $X_\bullet$
  directly from $X$. For this, we let $X_1=X$ and 
  then proceed inductively, as follows: once $X_i$
  has been constructed, let $K(H_{i}(X_i),i)$ be the $i$th
  Eilenberg-MacLane space for the group $H_i(X_i)$, and
  define $\xi_i$ as the fibration classified by the map
  \tlongmap{f_i}{X_{i}}{K(H_{i}(X_i),i)}corresponding to
  $id_{H_i(X_i)}$ under the universal coefficient theorem. 
  We will refer to the resulting tower as the {\em homology tower} of
  $X$. 
  For instance, $X_2=\widetilde X/P$ is the quotient of the universal
  cover of $X$ by the perfect group $P$.
\begin{Thm}\label{thm:BPQ}
  We have a diagram of pull-back squares,
\begin{center}
  \begin{tikzpicture}
    \node at (-5.5,1) [name=Y2] {$BS_{n}$};
    \node at (-5.5,3) [name=Y3] {$BA_{n}$};
    \node at (-5.5,5) [name=Y4] {$B\widetilde A_{n}$};

    \node at (-3,1) [name=Z2] {$BS_{\infty}$};
    \node at (-3,3) [name=Z3] {$BA_{\infty}$};
    \node at (-3,5) [name=Z4] {$B\widetilde A_{\infty}$};

    \node at (-0.5,-1.7) [anchor=north, text width=2.7cm, green!50!black]
      {Whitehead tower of $Q\bbS^0$};
    \node at (-0.5,-1) [name=A,green!50!black] {$Q\bbS^0$};
    \node at (-0.5,1) [name=A2,green!50!black] {$\left(BS_{\infty}\right)^+$};
    \node at (-0.5,3) [name=A3,green!50!black] {$\left(BA_{\infty}\right)^+$};
    \node at (-0.5,5) [name=A4,green!50!black] {$(B\widetilde A_{\infty})^+$};

    \node at (3,-1.7) [anchor=north, text width=3cm, blue!70!black]
      {Whitehead tower of $\bbZ\times BO$};
    \node at (2.5,-1) [name=B,blue!70!black] {$\bbZ\times BO$};
    \node at (2.5,1) [name=B2,blue!70!black] {$BO$};
    \node at (2.5,3) [name=B3,blue!70!black] {$BSO$};
    \node at (2.5,5) [name=B4,blue!70!black] {$BSpin$};

    \node at (5.5,-1) [name=C1] {$K\left(\pi_0\left(\bbS^0\right),0\right)$};
    \node at (5.5,1) [name=C2] {$K\left(\pi_1\left(\bbS^0\right),1\right)$};
    \node at (5.5,3) [name=C3] {$K\left(\pi_2\left(\bbS^0\right),2\right)$};

    \node at (-7.5,3) [name=X3] {$\{\pm1\}$};
    \node at (-7.5,5) [name=X4] {$\bbR P^\infty$};

    \draw[->, dashed] (B) -- (C1);
    \draw[->, dashed] (B2) -- (C2);
    \draw[->, dashed] (B3) -- (C3);

    \draw[right hook->, dashed] (X4) -- (Y4);
    \draw[right hook->, dashed] (X3) -- (Y3);

    \draw[->] (A) -- node [midway, above] {\small{$Q\eta$}} (B);
    \draw[->] (A2) -- (B2);
    \draw[->] (A3) -- (B3);
    \draw[->] (A4) -- (B4);

    \draw[->] (Z2) -- (A2);
    \draw[->] (Z3) -- (A3);
    \draw[->] (Z4) -- (A4);

    \draw[->] (Y2) -- (Z2);
    \draw[->] (Y3) -- (Z3);
    \draw[->] (Y4) -- (Z4);

    \draw[->>] (Y3) -- node [midway, right] {\small{$B\iota_n$}} (Y2);
    \draw[->>] (Y4) --  node [midway, right] {\small{$B\sigma_n$}}(Y3);

    \draw[->>] (Z3) -- node [midway, right] {\small{$B\iota_\infty$}} (Z2);
    \draw[->>] (Z4) -- node [midway, right] {\small{$B\sigma_\infty$}} (Z3);

    \draw[->>,green!50!black] (A3) -- node [midway, right]
      {\small{$(B\iota_\infty)^+$}} (A2); 
    \draw[->>,green!50!black] (A4) --  node [midway, right]
      {\small{$(B\sigma_\infty)^+$}}(A3); 
    \draw[right hook->, green!50!black] (A2) -- (A);

    \draw[->>,blue!70!black] (B3) -- node [midway, right] {\small{$B\varepsilon_\infty$}} (B2);
    \draw[->>,blue!70!black] (B4) --  node [midway, right] {\small{$B\kappa_\infty$}} (B3);
    \draw[right hook->,blue!70!black] (B2) -- (B);
  \end{tikzpicture}
\end{center}
  Here
  $
    Q\bbS^0\,=\,colim\;\Omega^n\,\bbS^n
  $
  is the infinite loop space of the sphere spectrum, and
  $\left(Q\bbS^0\right)_0$ the connected component of its
  basepoint, while $\eta\negmedspace:\bbS^0\longrightarrow KO$
is the unit map.
The composition of the solid horizontal arrows
gives the maps induced, respectively, by the representation
$\varrho_n$ and its lifts $\bar\varrho_n$ and $\widetilde\varrho_n$.
\end{Thm}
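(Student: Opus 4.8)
The plan is to establish the diagram one column at a time, moving from right to left, and to identify the three named spaces $\left(BS_\infty\right)^+$, $\left(BA_\infty\right)^+$, $\left(B\widetilde A_\infty\right)^+$ with the first three stages of the Whitehead tower of $\bbZ\times BO$ via the Barratt--Priddy--Quillen theorem. The starting point is the classical identification $\left(BS_\infty\right)^+\simeq\left(Q\bbS^0\right)_0$, the $0$-component of $Q\bbS^0$, which is the bottom green node; composing with the unit map $\eta\negmedspace:\bbS^0\to KO$ realizes the solid horizontal arrow $Q\eta\negmedspace:Q\bbS^0\to\bbZ\times BO$. The first thing I would verify is that on homotopy groups this map is precisely the $J$-homomorphism data recorded in the introduction, so that the vertical fibrations on the blue side (the Whitehead tower of $\bbZ\times BO$, with successive fibers the Eilenberg--MacLane spaces $K(\pi_i(\bbS^0),i)$ indicated by the dashed arrows) pull back along $Q\eta$ to the green tower.

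Next I would build the green tower itself as the homology tower of $BS_\infty$ described in the preceding subsection. The key input is that $A_\infty$ is perfect and superperfect in the stable range, so that $\left(BA_\infty\right)^+$ and $\left(B\widetilde A_\infty\right)^+$ are the correct plus constructions and that the maps $\left(B\iota_\infty\right)^+$ and $\left(B\sigma_\infty\right)^+$ realize the first two stages of the Whitehead tower, with fibers $K(\pi_1(\bbS^0),1)=\bbR P^\infty$ and $K(\pi_2(\bbS^0),2)$ respectively. Concretely, I would use that $\pi_1\left(\left(BS_\infty\right)^+\right)\cong\pi_1(\bbS^0)\cong\boldsymbol\mu_2$ is detected by the sign homomorphism $S_\infty\to\{\pm1\}$, whose kernel is $A_\infty$, so killing $\pi_1$ corresponds exactly to passing to $A_\infty$; this gives the dashed fiber inclusion $\bbR P^\infty\hookrightarrow BA_n$ and the leftmost column. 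The analogous step one level up, where $\widetilde A_\infty\to A_\infty$ is the universal central extension realizing $H_2(A_\infty)\cong\pi_2(\bbS^0)\cong\boldsymbol\mu_2$, gives the fiber $\{\pm1\}$ and the Eilenberg--MacLane fiber $K(\pi_2(\bbS^0),2)$.

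Having the two outer towers in hand, I would obtain each horizontal pull-back square by naturality: the fibrations $\xi_i$ on the unstable side $BS_n\to BS_\infty$ are defined by pulling back the stable homology tower, and this is compatible with the finite-level permutation representations $\varrho_n,\bar\varrho_n,\widetilde\varrho_n$ because each $B(-)$ of these representations is the restriction to the $n$th level of the corresponding stable map. The claim that the composites of solid horizontal arrows are induced by $\varrho_n$ and its lifts then follows by chasing the definitions of $\bar\varrho_n$ and $\widetilde\varrho_n$ as, respectively, the restriction of $\varrho_n$ to $A_n$ and its canonical lift along the spin cover.

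\emph{The main obstacle} I expect is the verification that the pullback of the \emph{second} stage of the blue Whitehead tower along $\left(BA_\infty\right)^+\to BSO$ really is $\left(B\widetilde A_\infty\right)^+$ rather than some other extension --- i.e.\ that the spin double cover $\widetilde A_n$ is the \emph{correct} finite model for the $2$-connected cover at this stage. This amounts to checking that the generator of $H^2(BA_\infty;\boldsymbol\mu_2)\cong\boldsymbol\mu_2$ classifying $B\sigma_\infty$ is pulled back from the corresponding class $w_2$ on $BSO$ under $\bar\varrho_\infty$, so that the central extension $\widetilde A_\infty$ agrees with the restriction of $Spin$. I would handle this by comparing Stiefel--Whitney classes: the spin cover is by definition the pullback of $Spin(n)\to SO(n)$ along $\bar\varrho_n$, so the square commutes on the nose at finite level, and passing to the colimit and applying the plus construction (which preserves the relevant cohomology by the homology-isomorphism property of $p$) promotes this to the stable pull-back square. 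The remaining squares are then formal consequences of the universal property of the plus construction stated above.
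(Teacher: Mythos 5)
Your proposal follows essentially the same route as the paper's proof: the Barratt--Priddy--Quillen identification of $(BS_\infty)^+$ with $\left(Q\bbS^0\right)_0$, the fact that the unit map $\eta$ induces isomorphisms on $\pi_0$, $\pi_1$ and $\pi_2$ so that $Q\eta$ pulls back the first three stages of the Whitehead tower of $\bbZ\times BO$, the identification of the homology tower of $BS_\infty$ via the sign homomorphism and the Schur cover of $A_\infty$, and naturality for the finite-level squares. One minor correction: the fibers of the two covering stages are swapped in your write-up --- the fiber of $BA_n\to BS_n$ is the discrete set $\{\pm1\}=K\left(\pi_1\left(\bbS^0\right),0\right)$, while the fiber of $B\widetilde A_n\to BA_n$ is $B\{\pm1\}=\bbR P^\infty=K\left(\pi_2\left(\bbS^0\right),1\right)$, each one degree below the Eilenberg--MacLane targets of the corresponding classifying maps.
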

\begin{proof}[Proof of Theorem \ref{thm:BPQ}]
  It is well known that $\eta$ induces isomorphisms on $$\pi_0=\bbZ \quad\text{and}\quad
  \pi_1=\bbZ/2\bbZ\quad \text{and} \quad \pi_2= \bbZ/2\bbZ.$$ So, 
  the map $Q\eta$ pulls back the first three steps of the Whitehead
  tower of $\bbZ\times BO$ to the first three steps of the Whitehead tower of $Q\bbS^0$.  
  The Barratt-Quillen-Priddy theorem yields a homology isomorphism
  \tlongmap{p}{BS_\infty}{\left(Q\bbS^0\right)_0,}satisfying
  $$p_*(A_\infty)=0\quad\quad\text{and}\quad\quad (Q\eta)\circ p=B\varrho_\infty.$$ 
  So, the Whitehead tower of
  $Q\bbS^0$ is identified with the plus construction of the homology tower of
  $BS_\infty$.
  It remains to identify this homology tower in the relevant degrees.
  The first step is the
  pull-back of $B\varepsilon_\infty$ along $B\varrho_\infty$. This is
  the non-trivial double cover $B\iota_\infty$, classified by the map
  \begin{center}
    \begin{tikzpicture}
      \node at (0,0) [name=A] {$B(sgn)\negmedspace:BS_\infty$};
      \node at (4.4,0) [name=B] {$BO$};
      \node at (8,0) [name=C] {$B\{\pm1\}$.};
      \draw[->] (A) -- node [above] {$B\varrho_\infty$} (B);
      \draw[->] (B) -- node [above] {$Bdet$} (C);
    \end{tikzpicture}
  \end{center}
  Indeed, $$X_2 = \widetilde X/P = ES_\infty/A_\infty.$$
  Next, the double cover of $B\varrho_\infty$ is
  $B\bar\varrho_\infty$ and pulls back $B\kappa_\infty$ to the
  fibration $\xi_2=B\sigma_\infty$. The classifying map $f_2$
  represents the class
  $$
    [f_2]\,\in\,H^2\left(BA_\infty;H_2\left(BA_\infty\right)\right)
  $$
  classifying the Schur cover of $A_\infty$. Finally,
  the lift of $B\bar\varrho_\infty$ to $B\widetilde A_\infty$ is
  $B\widetilde\varrho_\infty$.
\end{proof}
As an immediate consequence of the theorem, we obtain half of the tower promised in the
introduction as restrictions of the short exact sequence
\begin{center}
  \begin{tikzpicture}
    \node at (0,0) [name=a] {$A_\infty$};
    \node at (2.5,0) [name=b] {$S_\infty$};
    \node at (5.2,0) [name=c] {$\pi_1(\bbS^0)$,};

    \draw[->] (a) -- (b);
    \draw[->] (b) -- (c);
  \end{tikzpicture}
\end{center}
the Schur cover of $\widetilde A_\infty$
\begin{center}
  \begin{tikzpicture}
    \node at (0.3,0) [name=a] {$\pi_2(\bbS^0)$};
    \node at (3.1,0) [name=b] {$\widetilde A_\infty$};
    \node at (5.4,0) [name=c] {$A_\infty$,};

    \draw[->] (a) -- (b);
    \draw[->] (b) -- (c);
  \end{tikzpicture}
\end{center}
and the categorical Schur cover of the superperfect group $\widetilde
A_\infty$ 
\begin{center}
  \begin{tikzpicture}
    \node at (0.5,0) [name=a] {$1/\!\!/\pi_3(\bbS^0)$};
    \node at (3.4,0) [name=b] {$\mathscr A_\infty$};
    \node at (5.6,0) [name=c] {$\widetilde A_\infty$.};

    \draw[->] (a) -- (b);
    \draw[->] (b) -- (c);
  \end{tikzpicture}
\end{center}
In other words, we can now construct the $n$th alternating 2-group as
$$
  \mathscr A_n \,:=\,\mathscr A_\infty\arrowvert_{\widetilde A_n}.
$$
Consider the homomorphisms
$$
\begin{array}{ccccccc}
  b_1\negmedspace : H_1(S_n) &\,\longrightarrow\,&H_1(S_\infty) &\,\cong\,&
                                                            \pi_1(\bbS^0) &\,\cong\,& 
                                                            \boldsymbol\mu_2 \\ [+6pt]
  b_2\negmedspace : H_2(A_n) &\,\longrightarrow\,& H_2(A_\infty) &\,\cong\,&
                                                             \pi_2(\bbS^0) &\,\cong\,& 
                                                            \boldsymbol\mu_2\\ [+6pt]
  b_3\negmedspace : H_3(\widetilde A_n) &\,\longrightarrow\,& H_3(\widetilde
                                             A_\infty) &\,\cong\,&
                                                                   \pi_3(\bbS^0) 
                                                &\,\cong\,&
                                                            \boldsymbol\mu_{24}, 
\end{array}
$$
where the middle isomorphisms are induced by the Barratt-Priddy-Quillen map. 

\begin{Lem}[{\cite[7.2.3]{Hausmann78}}]
  The map $b_1$ is an
  isomorphism for $n\geq 2$, the map $b_2$ is an isomorphism for
  $n=4,5$ or $n\geq 8$, and the map $b_3$ is an isomorphism for $n=4$,
  $n=8$ or $n\geq 11$.
\end{Lem}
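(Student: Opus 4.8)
The plan is to treat $b_1,b_2,b_3$ uniformly as stabilisation maps and, in each degree, to split the analysis into a \emph{stable range}, in which homological stability forces an isomorphism, and finitely many \emph{unstable cases}, which I would settle by direct computation. Theorem~\ref{thm:BPQ} has already identified the three targets with the stable stems $\pi_1(\bbS^0)\cong\boldsymbol\mu_2$, $\pi_2(\bbS^0)\cong\boldsymbol\mu_2$ and $\pi_3(\bbS^0)\cong\boldsymbol\mu_{24}$, so the only remaining question is for which $n$ the comparison map from the finite group to its infinite counterpart is an isomorphism. By Nakaoka's homological stability for the symmetric groups, together with the transfer comparison between $S_n$, $A_n$ and the spin double cover $\widetilde A_n$ that separates the $2$-primary from the odd-primary part, each $b_i$ is an isomorphism once $n$ is large compared to $i$; the content of the lemma is the sharp threshold and the sporadic low-degree coincidences.

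The map $b_1$ requires no stability argument: $H_1(S_n)=S_n^{\mathrm{ab}}$ is detected by the sign and is $\boldsymbol\mu_2$ for every $n\geq2$, so the map to $H_1(S_\infty)\cong\boldsymbol\mu_2$ is an isomorphism in precisely that range. For $b_2$ the input is the classical computation of the Schur multiplier of the alternating group, namely $H_2(A_n)\cong\boldsymbol\mu_2$ for $n=4,5$ and for $n\geq8$, while $H_2(A_6)\cong H_2(A_7)\cong\boldsymbol\mu_6$ and $H_2(A_n)=0$ for $n\leq3$. Since the stable value is $\boldsymbol\mu_2$ and the stabilisation map is surjective onto it, $b_2$ is an isomorphism exactly when the source has order $2$, that is for $n=4,5$ or $n\geq8$; the additional $3$-torsion occurring at $n=6,7$ is precisely what obstructs the isomorphism there.

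The substantive case is $b_3$, which I would reduce to computing the \emph{order} of $H_3(\widetilde A_n)$: the map can be an isomorphism only if this order is $24$, and conversely, once the stabilisation map is known to be surjective (automatic in the stable range, and verifiable directly at the exceptional values), an order count settles the matter. The cases $n=4,5$ come from the platonic picture of Section~\ref{sec:Platonic}: since $\widetilde A_4\cong 2T$ and $\widetilde A_5\cong 2I$ are finite subgroups of $\bbS^3$, period-$4$ periodicity gives the cyclic groups $H_3(2T)\cong\boldsymbol\mu_{24}$ and $H_3(2I)\cong\boldsymbol\mu_{120}$; thus $b_3$ is an isomorphism at $n=4$ but not at $n=5$, where the source has order $120$ and so cannot inject into $\boldsymbol\mu_{24}$. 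For $n=6,7,8,9,10$ I would invoke the explicit (co)homology of the spin covers furnished by Schur's theory of projective representations of the alternating groups, finding that $|H_3(\widetilde A_n)|=24$ only at $n=8$. For $n\geq11$ the isomorphism follows from homological stability for the double covers $\widetilde A_n$.

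I expect the crux to be the degree-$3$ analysis: pinning down that stabilisation is complete at $n=11$ but not yet at $n=9,10$, and that $n=8$ is a genuine coincidence. The obstruction is odd-primary. The $2$-primary part of $H_3(\widetilde A_n)$ settles early, whereas the Sylow $3$-subgroup of $A_n$ jumps from $(\boldsymbol\mu_3)^2$ at $n=8$ to the wreath product of order $3^4$ at $n=9$, producing unstable $3$-torsion in degree $3$ that persists at $n=9,10$ and inflates $|H_3(\widetilde A_n)|$ beyond $24$ before disappearing by $n=11$. Isolating this unstable odd torsion from the stable class $\boldsymbol\mu_3\subseteq\boldsymbol\mu_{24}$ requires a genuine $p$-local computation rather than the coarse Nakaoka bound $n\geq2i$, and this is exactly the analysis carried out in \cite[7.2.3]{Hausmann78}, whose argument I would follow.
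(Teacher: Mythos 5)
Your treatment of $b_1$ and $b_2$ matches the paper's (both are dispatched as classical facts), and for $b_3$ in the range $n\geq 5$ your plan and the paper's both ultimately come down to citing Hausmann: the paper reduces to $H_3(\widetilde A_n)\cong\pi_3(B\widetilde A_n^+)\cong\pi_3(BA_n^+)$ using superperfectness and the fibration $B\{\pm1\}\to B\widetilde A_n^+\to BA_n^+$, and then reads the answer off the proof of Proposition A in \cite{Hausmann78}; this is legitimate since the lemma is explicitly attributed to that source. The genuine gap is at $n=4$. Your argument there is an order count ($|H_3(2T)|=24=|\pi_3(\bbS^0)|$ from $4$-periodicity) plus the assertion that surjectivity of the stabilisation map is ``verifiable directly.'' It is not: $A_4$ is not perfect and $\widetilde A_4$ is not superperfect, so the plus-construction comparison breaks down there, the case $n=4$ is excluded from the appeal to Hausmann's Proposition A (the paper says so explicitly), and equality of orders says nothing about the map itself. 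The paper spends an entire separate result, Lemma \ref{lem:tetrahedral}, on exactly this point: it identifies $(Spin/\widetilde A_\infty)^+$ with the homotopy fibre of $B\widetilde\varrho_\infty^{\,+}$, compares the resulting short exact sequence with the Leray--Serre differential for the platonic fibration $\bbS^3/2T\to B(2T)\to\bbH P^\infty$ via a non-standard embedding $Spin(3)\subset Spin(4)\subset Spin$, and concludes that the fundamental class of the tetrahedral spherical space form maps to a generator (the Hopf class $\nu$) of $\pi_3(\bbS^0)$. Some argument of this kind is unavoidable, and your proposal does not contain one.

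Two smaller inaccuracies: Schur's theory of projective representations computes $H_2$, not $H_3$, so it cannot ``furnish'' $|H_3(\widetilde A_n)|$ for $6\leq n\leq 10$; and there is no off-the-shelf homological stability theorem for the double covers $\widetilde A_n$ with the sharp threshold $n\geq 11$ in degree $3$ --- any such statement would have to fail at $n=9,10$, which is precisely the computation you are trying to import. Your proposed $3$-primary mechanism for the failure at $n=9,10$ is a plausible guess but is offered without verification. None of this is fatal to the cases $n\geq 5$, since you defer to \cite[7.2.3]{Hausmann78} just as the paper does, but the case $n=4$ cannot be deferred and is where your proof is actually missing a step.
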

\begin{proof}
  It is well known that the abelianization of $S_n$ is $\bbZ/2\bbZ$
  for $n\geq 2$. The values where $b_2$ is an isomorphism are also well
  known. This goes back to work of Schur. For $5\leq n\leq\infty$, the
  group $A_n$ is perfect. 
  In this range, we have a compatible system of isomorphisms   
  \begin{eqnarray*}
        H_2(A_n) &\,\cong \,& \pi_2(BA_n^+).
  \end{eqnarray*}
  Similarly, we have compatible isomorphisms
  \begin{eqnarray*}
        H_3(\widetilde A_n) &\,\cong \,& \pi_3(B\widetilde A_n^+),
  \end{eqnarray*}
  for $n=5$ and $8\leq n\leq\infty$. Further, when $A_n$ is perfect, the fibration
  \begin{center}
    \begin{tikzpicture}
      \node at (0,0) [name=A] {$B\{\pm1\}$};
      \node at (3,0) [name=B] {$B\widetilde A_n^+$};
      \node at (6,0) [name=C] {$BA_n^+$.};
      \draw[>->] (A) --  (B);
      \draw[->>] (B) -- node [above] {} (C);
    \end{tikzpicture}
  \end{center}
  \cite[Prop.7.1.3]{Hausmann78} 
  yields an isomorphism
  \begin{eqnarray*}
        \pi_3(B\widetilde A_n^+) &\,\cong \,& \pi_3(BA_n^+).
  \end{eqnarray*}
  Apart from the case $n=4$, which we will treat in Lemma \ref{lem:tetrahedral}, the statement of the Lemma can now be 
  read off from
  the proof of Proposition A in 
  \cite{Hausmann78}.
\end{proof}
In low degrees, we still have: 
\begin{Cor}
  When $A_n$ is perfect, then its spin extension
  $\widetilde A_n$ is classified by the homomorphism $b_2$.    
  When $\widetilde A_n$ is superperfect, then its string extension
  $\mathscr A_n$ is classified by the homomorphism $b_3$.    
\end{Cor}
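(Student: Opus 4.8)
The plan is to realize both extensions as restrictions of the corresponding universal extensions over the stable groups $A_\infty$ and $\widetilde A_\infty$, and then to read off the classifying homomorphism from naturality of the universal coefficient theorem. For the first statement, assume $A_n$ is perfect. Then the Ext-term in the universal coefficient theorem vanishes, so by Corollary \ref{Cor:vanishing}(1)--(2) (compare Theorem \ref{thm:Schur_cover}) central extensions of $A_n$ by an abelian group $A$ are classified by $\mathrm{Hom}(H_2(A_n),A)$. By Theorem \ref{thm:BPQ}, the spin extension $\widetilde A_n$ is the restriction of $\widetilde A_\infty$ along the inclusion $A_n\hookrightarrow A_\infty$, and $\widetilde A_\infty$ is the Schur cover of $A_\infty$, hence classified by $\mathrm{id}\negmedspace: H_2(A_\infty)\to H_2(A_\infty)$ under the identification $H_2(A_\infty)\cong\pi_2(\bbS^0)\cong\boldsymbol\mu_2$. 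Restriction of central extensions corresponds to the pullback map $H^2(A_\infty;\boldsymbol\mu_2)\to H^2(A_n;\boldsymbol\mu_2)$, which by naturality of the universal coefficient theorem is precomposition with the induced map $b_2\negmedspace: H_2(A_n)\to H_2(A_\infty)$; applied to the identity class, this yields exactly $b_2$.

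The categorical statement is formally identical, one degree higher. Since $\widetilde A_n$ is superperfect, the group $H_2(\widetilde A_n)$ vanishes, so the relevant Ext-term in the universal coefficient theorem is trivial and Corollary \ref{Cor:vanishing}(3)--(4) classifies categorical central extensions of $\widetilde A_n$ by $A$ via $\mathrm{Hom}(H_3(\widetilde A_n),A)$. By construction $\mathscr A_n=\mathscr A_\infty\arrowvert_{\widetilde A_n}$, and $\mathscr A_\infty$ is the categorical Schur cover of $\widetilde A_\infty$, hence classified by $\mathrm{id}\negmedspace: H_3(\widetilde A_\infty)\to H_3(\widetilde A_\infty)$ under $H_3(\widetilde A_\infty)\cong\pi_3(\bbS^0)\cong\boldsymbol\mu_{24}$. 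Restriction along $\widetilde A_n\hookrightarrow\widetilde A_\infty$ is the pullback $H^3(\widetilde A_\infty;\boldsymbol\mu_{24})\to H^3(\widetilde A_n;\boldsymbol\mu_{24})$, which is again precomposition with $b_3\negmedspace: H_3(\widetilde A_n)\to H_3(\widetilde A_\infty)$, so $\mathscr A_n$ is classified by $b_3$.

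The point that requires care is the compatibility of the topological (respectively $2$-categorical) restriction with the algebraic pullback on cohomology: one must track the Dold-Kan correspondence of Theorem \ref{thm:DoldKan} through restriction and check that the equivalence class of $\mathscr A_\infty\arrowvert_{\widetilde A_n}$ is the pullback of the universal class $[\alpha_{uni}]$ of $\widetilde A_\infty$. For ordinary central extensions this bookkeeping is classical, but in the categorical case the $2$-morphisms must be controlled. Superperfectness is exactly what rescues the argument: it forces $H^1_{gp}$ and $H^2_{gp}$ to vanish, so by Corollary \ref{Cor:vanishing}(4) the bicategory $\mE\!xt(\widetilde A_n)$ has only identity $2$-morphisms and its equivalence classes are recorded faithfully by $H^3$, reducing the entire claim to the already-established naturality of the universal coefficient isomorphism.
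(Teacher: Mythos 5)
Your argument is correct and is precisely the reasoning the paper intends: the corollary is stated without proof as an immediate consequence of the classification of (categorical) central extensions of (super)perfect groups by homomorphisms out of $H_2$ (resp.\ $H_3$), the definition $\mathscr A_n=\mathscr A_\infty\arrowvert_{\widetilde A_n}$ (and the analogous pullback description of $\widetilde A_n$ from Theorem \ref{thm:BPQ}), and the naturality of the universal coefficient theorem, which turns restriction into precomposition with $b_2$ (resp.\ $b_3$). Your closing remark correctly identifies superperfectness as what kills the lower cohomology and makes the bicategorical bookkeeping trivial.
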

\subsection{The Adams $e$-invariant}
Given a ring spectrum $E$ with unit map $\eta=\eta_E$, we may form the exact
triangle
\[
  E[-1]\,\longrightarrow\,\overline E \, \longrightarrow \, \bbS^0
  \,\stackrel\eta\longrightarrow\, E
\]
in the stable homotopy category.\footnote{This is the
  first step in the construction of the $E$-based Adams-Novikov
  spectral sequence.}
In the case $E=KO$, we have $$\pi_{4k-1}(KO)\,=\,0\quad\quad\text{ and }\quad\quad
\pi_{4k}(KO)\,=\,\bbZ.$$ 
In positive degrees, the stable homotopy groups of spheres are finite.
It follows that for $k\geq 1$, the map $\pi_{4k}(\eta)$ is zero, so
that 
we obtain a short exact sequence
  \begin{center}
    \label{eq:ses}
    \begin{tikzpicture}
      \node at (-1.5,0) [name = Z] {$0$};
      \node at (0,0) [name=A] {$\bbZ$};
      \node at (2.4,0) [name=B] {$\pi_{4k-1}(\overline{KO})$};
      \node at (5.4,0) [name=C] {$\pi_{4k-1}(\bbS^0)$};
      \node at (7.5,0) [name = D] {$0$.};
      \draw[->] (Z) -- (A);
      \draw[->] (A) -- (B);
      \draw[->] (C) -- (D);
      \draw[->] (B) -- (C);
    \end{tikzpicture}
  \end{center}
For a finite abelian group $\pi$, we further have the isomorphism
\begin{equation}
  \label{eq:ext-hom}
  Ext(\pi,\bbZ) \,\cong\, Hom(\pi,\bbQ/\bbZ)  
\end{equation}
resulting from the injective
resolution 
$$\bbZ\longrightarrow
\bbQ\longrightarrow \bbQ/\bbZ.$$
\begin{Def}\label{def:e}
  For $k\geq 1$, we let
  \tlongmap{e}{\pi_{4k-1}(\bbS^0)}{\bbQ/\bbZ}be
  the homomorphism classifying the extension
  $[\pi_{4k-1}(\overline{KO})]$ above.
\end{Def}
\begin{Lem}
  Our definition of $e$ agrees with the definition of the Adams
  $e$-invariant in \cite[(1.1)]{AtiyahSmith74} and \cite[(4.11)]{AtiyahPatodiSingerII}.
\end{Lem}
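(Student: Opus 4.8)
The plan is to express both definitions of $e$ as one and the same $\bbQ/\bbZ$-valued invariant of the mapping cone of $\alpha$, namely the top-cell component of the Pontryagin character of a $KO$-theoretic lift of the unit, and to match this against the two descriptions of the isomorphism \eqref{eq:ext-hom}.

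First I would record the geometric meaning of \eqref{eq:ext-hom} via the injective resolution $\bbZ\to\bbQ\to\bbQ/\bbZ$. Rationalising the exact triangle $\overline{KO}\to\bbS^0\xrightarrow{\eta}KO$ and using that $\pi_{4k-1}(\bbS^0)$ is finite, the unit $\eta_\bbQ$ is the inclusion of the degree-zero summand of $KO_\bbQ\simeq\prod_j\Sigma^{4j}H\bbQ$, so that $\pi_{4k-1}(\overline{KO}_\bbQ)\cong\bbQ$ and the rationalisation of $\bbZ=\pi_{4k}(KO)\hookrightarrow\pi_{4k-1}(\overline{KO})$ is the standard inclusion $\bbZ\hookrightarrow\bbQ$. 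Consequently the extension class of the short exact sequence above classifies the homomorphism that sends $\alpha$ to the image in $\bbQ$ of any lift $\tilde\alpha\in\pi_{4k-1}(\overline{KO})$, reduced modulo $\bbZ$.

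Next I would translate a lift $\tilde\alpha$ into $K$-theory data on the cofibre. Let $C_\alpha$ be the cofibre of $\alpha\colon\bbS^{4k-1}\to\bbS^0$, giving a cofibre sequence $\bbS^{4k-1}\to\bbS^0\to C_\alpha\to\bbS^{4k}$. Because $\eta\circ\alpha\in\pi_{4k-1}(KO)=0$, the unit $\bbS^0\to KO$ extends over $C_\alpha$ to a class $x\in KO^0(C_\alpha)$ restricting to $1$ on the bottom cell; the set of such extensions is a torsor under $KO^0(\bbS^{4k})\cong\bbZ$, matching the indeterminacy of $\tilde\alpha$. Applying the Pontryagin character $ph$ and reading off the coefficient of the degree-$4k$ generator recovers the rational number of the previous paragraph, the $\bbZ$-indeterminacy being exactly the lattice spanned by $ph$ of the top-cell generator. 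This is verbatim the recipe by which Adams, and \cite{AtiyahSmith74}, \cite{AtiyahPatodiSingerII}, extract $e$ from $C_\alpha$.

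Having identified both sides with the degree-$4k$ component of $ph(x)\bmod\bbZ$, the remaining work is to reconcile conventions: the grading shift between the homotopy of the fibre spectrum $\overline{KO}$ and the cohomology of the cofibre space $C_\alpha$, the normalisation of the Bott generator of $\pi_{4k}(KO)$, and the real-versus-complex convention together with the denominators in \cite[(1.1)]{AtiyahSmith74} and \cite[(4.11)]{AtiyahPatodiSingerII}. The hard part will be precisely this bookkeeping: the conceptual chain ``fibre of $\eta$'' $\leftrightarrow$ ``$KO$-extension over the cone'' $\leftrightarrow$ ``Pontryagin-character defect'' is clean, but verifying that the resulting sign and denominator agree with the cited formulas---and in particular that we land on the real invariant with their normalisation rather than on its complex counterpart---requires carefully tracking the Bott class through the long exact sequence. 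I expect no conceptual surprise here, only care.
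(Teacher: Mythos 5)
Your identification of the paper's $e$ with the homotopy-theoretic cofibre recipe is correct and is a genuinely different route from the paper's: you rationalise the triangle $\overline{KO}\to\bbS^0\xrightarrow{\eta}KO$, observe $\pi_{4k-1}(\overline{KO}_{\bbQ})\cong\bbQ$, and read the extension class as ``image of a lift mod $\bbZ$,'' equivalently as the top-cell component of the Pontryagin character of a $KO$-extension of the unit over the cone $C_\alpha$. The paper never passes through $C_\alpha$ at all, so this half of your argument is a clean alternative.

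The genuine gap is where you assert that this recipe is ``verbatim'' the cited definitions and that only sign and denominator bookkeeping remains. Neither \cite[(1.1)]{AtiyahSmith74} nor \cite[(4.11)]{AtiyahPatodiSingerII} is a cofibre/Pontryagin-character formula: the former defines $e(M)$ as $\widehat A(B)$ (resp.\ $\tfrac12\widehat A(B)$) for a spin manifold $B$ with framed boundary $M$, and the latter is the corresponding index-/eta-theoretic expression. Bridging from your homotopy-theoretic description to theirs is the substantive content of the lemma, not a normalisation check: one must map the geometric exact triangle built from $MSpin$ to the triangle built from $KO$ via the Atiyah--Bott--Shapiro orientation, and then identify the resulting map on $\pi_{4k-1}(\overline{KO})$ with $B\mapsto\widehat A(B)$ by showing that the relative $\widehat A$-genus on $\Omega_{4k}^{Spin,fr}$ is determined by its values on closed manifolds (because the relevant inclusion is a rational isomorphism). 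That is precisely what the paper's proof supplies, following \cite{ConnerFloyd66} and \cite{LawsonMichelsohn89}, and what your proposal omits. If your intention is instead to quote from \cite{AtiyahSmith74} that their (1.1) agrees with Adams's homotopy-theoretic $e_{\bbR}$, you should say so explicitly --- but then you are citing the comparison the lemma is meant to establish rather than proving it.
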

\begin{proof}
  Following the discussion of the complex $e$-invariant in
  \cite[III,16]{ConnerFloyd66}, Atiyah and Smith identify the real $e$-invariant of a
  framed manifold $M$ of dimension $4k-1$ as
  $$%
    e(M) \,= \,
    \begin{cases}
      \phantom{\frac12}\widehat A(B) & \quad\text{$k$ even,}\\[+3pt]
      \frac12\widehat A(B) & \quad\text{$k$ odd,} 
    \end{cases}
  $$
  where $B$ is any spin manifold with boundary $\partial B=M$. 
  They argue that this is a
  well-defined element of $\bbQ/\bbZ$ by the integrality result
  \cite[Cor.2(ii)]{AtiyahHirzebruch59}.    
  To understand this formulation, consider
  the maps of exact triangles
  \[
    \includegraphics[scale=1.2]{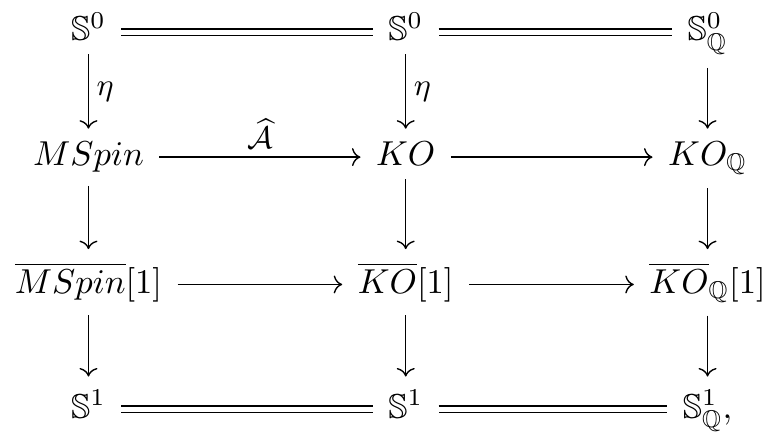}
  \]
  where $\widehat{\mathcal A}$ is the Atiyah-Bott-Shapiro orientation
  \cite{AtiyahBottShapiro64}. Following
  \cite[(7.9),(7.13),(7.17)]{LawsonMichelsohn89}, this
  yields a diagram with exact columns
  \[
    \includegraphics[scale=1.2]{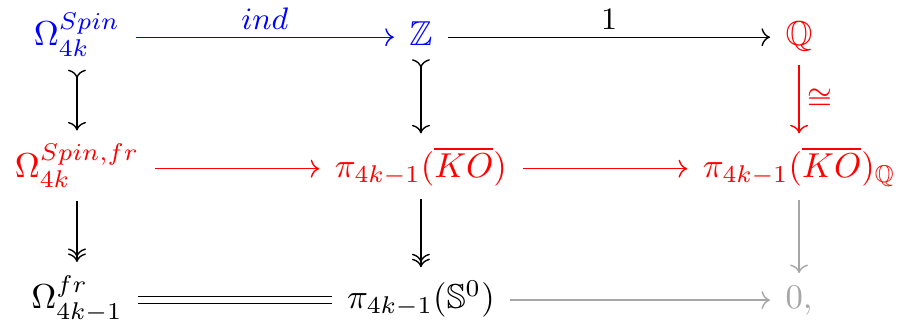}
  \]
  where $ind$ is the Atiyah-Milnor-Singer invariant,
  $$%
    ind(X) \,= \,
    \begin{cases}
      \phantom{\frac12}\widehat A(X) & \quad\text{$k$ even,}\\[+3pt]
      \frac12\widehat A(X) & \quad\text{$k$ odd.} 
    \end{cases}
  $$
  We claim that, for even $k$, the composite of the red arrows sends a spin manifold
  with framed boundary to the integral over its $\widehat
  A$-class. Indeed, this relative $\widehat A$-genus 
  is a homomorphism from
  $\Omega_{4k}^{Spin,fr}$ to $\bbQ$, which for closed manifolds agrees with
  the $\widehat A$-genus. Since the inclusion
  \[
    \includegraphics[scale=1.2]{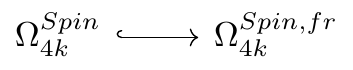}
  \]
  becomes an isomorphism after tensoring with $\bbQ$, this property
  determines the relative $\widehat A$-genus uniquely. 
  By the identical argument, the red arrows compose to half the
  relative $\widehat A$-genus for $k$ odd.
  We may now
  reformulate the definition \cite[(1.1)]{AtiyahSmith74} as follows:
  Given an element $x$ of $\pi_{4k-1}(\bbS^0)$, choose a pre-image
  $\overline x$ of $x$ in $\pi_{4k-1}(\overline{KO})$ and take $e(x)$
  to be the image
  of $\overline x$ in 
  \[
    \includegraphics{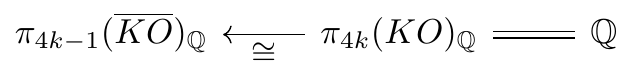}
  \]
  modulo
  \[
    \pi_{4k}({KO})\,=\, \bbZ.
  \]
  This description of $e$ coincides with the classifying map of the
  extension $[\pi_{4k-1}(\overline{KO})]$.
\end{proof}
\begin{Lem}\label{lem:hofib}
  The composite 
  \begin{center}
    \includegraphics[scale=1.2]{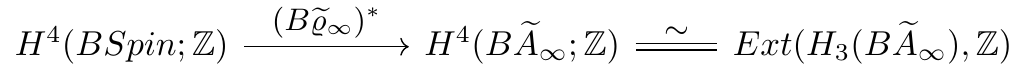}
  \end{center}
  sends the preferred generator of $H^4(BSpin;\bbZ)$ to the extension
  $[\pi_3\overline{KO}]$ of 
  $$
    H_3(B\widetilde A_\infty) \,\cong\,\pi_3(\bbS^0),
  $$
  used in Definition \ref{def:e}.
\end{Lem}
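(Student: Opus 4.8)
The plan is to read the composite in diagram17 as the pullback map $f^*$ along the map $f\negmedspace:(B\widetilde A_\infty)^+\to BSpin$ furnished by Theorem \ref{thm:BPQ}, followed by the universal coefficient edge map. Since $(B\widetilde A_\infty)^+$ is $2$-connected with $\pi_3=\pi_3(\bbS^0)$ and $\pi_4=\pi_4(\bbS^0)=0$, it is rationally $4$-connected, so $H_4$ is finite, $Hom(H_4,\bbZ)=0$, and the universal coefficient sequence collapses to
$$
  H^4\bigl((B\widetilde A_\infty)^+;\bbZ\bigr)\;\cong\;Ext\bigl(H_3(B\widetilde A_\infty),\bbZ\bigr)\;\cong\;Hom\bigl(\pi_3(\bbS^0),\bbQ/\bbZ\bigr),
$$
the last step being the isomorphism \eqref{eq:ext-hom}. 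Thus the composite lands in exactly the target of Definition \ref{def:e}, and the task reduces to identifying the image of the preferred generator $u\in H^4(BSpin;\bbZ)\cong\bbZ$ with the class of $[\pi_3\overline{KO}]$.

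First I would recall that $u$ classifies the next stage $BString\to BSpin$ of the Whitehead tower of $\bbZ\times BO$: this fibration kills $\pi_4(BSpin)=\pi_4(KO)=\bbZ$, hence has fibre $K(\bbZ,3)$, and $u$ is its $k$-invariant. Pulling back along $f$ gives a fibration $P\to(B\widetilde A_\infty)^+$ with the same fibre $K(\bbZ,3)$ and $k$-invariant $f^*u$. Because $\pi_4\bigl((B\widetilde A_\infty)^+\bigr)=0$, its long exact sequence degenerates to a short exact sequence
$$
  0\longrightarrow\bbZ\longrightarrow\pi_3(P)\longrightarrow\pi_3(\bbS^0)\longrightarrow0,
$$
and under the universal coefficient identification above this extension \emph{is} the class $f^*u\in Ext(\pi_3(\bbS^0),\bbZ)$ (the standard identification of a first $k$-invariant with the extension of homotopy groups it produces). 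It then remains to identify this extension with $\pi_3(\overline{KO})$.

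For that I would exploit that, by Theorem \ref{thm:BPQ}, $f$ is $\Omega^\infty$ of the truncated unit $\tau_{\ge3}\eta\negmedspace:\tau_{\ge3}\bbS^0\to\tau_{\ge3}KO$. Since $\Omega^\infty$ preserves fibre sequences, the homotopy fibre of $f$ is $\Omega^\infty\Phi$ with $\Phi=\mathrm{fib}(\tau_{\ge3}\eta)$; comparing the long exact sequence of $\Phi$ with that of $\mathrm{fib}(\eta)=\overline{KO}$ (the only terms entering in degree $3$ are $\pi_4(KO)=\bbZ$, $\pi_3(\bbS^0)$ and $\pi_3(KO)=0$, none affected by the truncation) gives $\pi_3(\Phi)\cong\pi_3(\overline{KO})$ as extensions of $\pi_3(\bbS^0)$ by $\bbZ$. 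Finally, in the pullback square defining $P$ the fibre of $P\to BString$ is precisely the homotopy fibre $\Omega^\infty\Phi$ of $f$, and $BString$ is $3$-connected, so the long exact sequence of $\Omega^\infty\Phi\to P\to BString$ yields $\pi_3(P)\cong\pi_3(\Omega^\infty\Phi)=\pi_3(\overline{KO})$, compatibly with the two presentations as extensions. Chasing generators through then shows $f^*u=[\pi_3\overline{KO}]$.

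The main obstacle is not the abstract matching of the two extensions but pinning down orientations. One must verify that the \emph{preferred} generator $u$ — normalised via the Atiyah--Bott--Shapiro orientation $\widehat{\mathcal A}$ as in the preceding lemma — corresponds to the chosen generator of $\pi_4(KO)=\bbZ$, so that $f^*u$ is genuinely $[\pi_3\overline{KO}]$ rather than a sign-twist or a nonzero multiple of it. This is exactly the normalisation already fixed in the identification of $e$ with the Adams $e$-invariant, and invoking it closes the argument.
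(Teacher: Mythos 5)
Your argument is correct and follows essentially the same route as the paper: both replace $B\widetilde A_\infty$ by its plus construction, use the collapse of the universal coefficient sequence to land in $Ext(\pi_3(\bbS^0),\bbZ)$, convert the pulled-back degree-$4$ class into the extension $0\to\bbZ\to\pi_3\to\pi_3(\bbS^0)\to0$ of the associated fibration, and identify that extension with $[\pi_3\overline{KO}]$ by comparing with the $2$-connected cover of the fibre sequence of the unit $\eta\colon\bbS^0\to KO$ (your $\Omega^\infty$ of the truncated unit is exactly the paper's second row of homotopy fibre sequences). The paper carries out your ``standard identification of the $k$-invariant with the extension it produces'' explicitly via Hurewicz and a commutative diagram with exact rows, but the content is the same.
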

\begin{proof}
  The naturality of the universal coefficient theorem (the isomorphism
  in the lemma) allows us to replace $B\widetilde A_\infty$ with
  $B\widetilde A_\infty^+$ and $B\widetilde \varrho_\infty$ with
  $B\widetilde \varrho_\infty^{\,+}$. Let
  \tlongmap{\xi}{BSpin}{K(\bbZ,4)}represent the preferred
  generator. Then $\left(B\widetilde \varrho_\infty^{\,+}\right)^*([\xi])$ is
  represented by the composite
  $$
    \xi'\,=\,\xi\circ B\widetilde \varrho_\infty^{\,+}.
  $$
  We have a homotopy commutative diagram
  \begin{center}
  \includegraphics[scale=1.2]{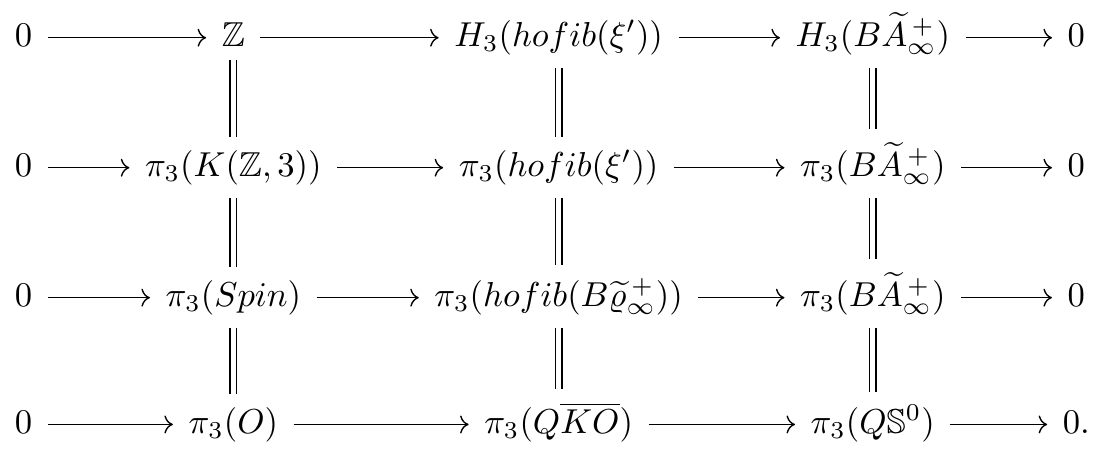}
  \end{center}
  whose rows are homotopy fiber sequences. Using the long exact
  sequence of (unstable) homotopy 
  groups, we find that all the spaces in the top two rows are
  2-connected. In fact, the second row forms the 2-connected cover of the
  third row.
  Using Hurwicz and the fact that there are no
  non-trivial homomorphisms from a finite group to $\bbZ$,
  we arrive at the following commutative diagram with
  exact rows
  \begin{center}
    \includegraphics[scale=1.2]{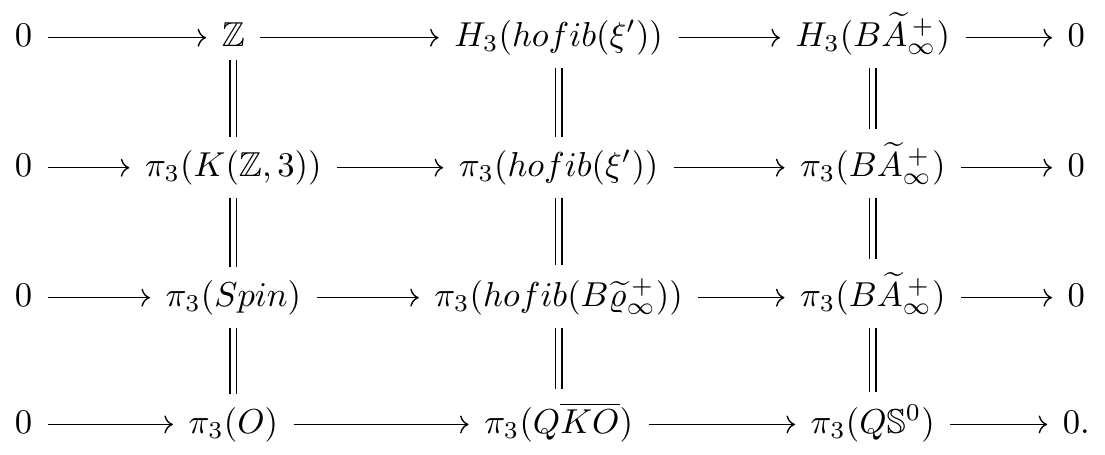}
  \end{center}
  The universal coefficient theorem identifies the class $[\xi']$ with
  the extension on the top row, while the bottom row is the extension
  in Definition \ref{def:e}. 
\end{proof}
\begin{Cor}\label{cor:A_n->String}
  The restriction of $String(n)$ to $\widetilde A_n$ is equivalent, in a manner
  unique up to unique isomorphism, to the categorical group with
  center $U(1)$ associated to $\mathscr A_n$ via the Adams $e$-invariant,
  $$\mathscr A_n[e]\,\simeq\, String(n)\arrowvert_{\widetilde A_n}.$$
\end{Cor}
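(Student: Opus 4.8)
The plan is to compare the two categorical central extensions through their classifying classes in $H^3_{gp}(\widetilde A_n; U(1))$. Both $\mathscr A_n[e]$ and $String(n)|_{\widetilde A_n}$ are finite-dimensional categorical central extensions of $\widetilde A_n$ with center $U(1)$, so by Corollary \ref{Cor:vanishing}(3) each is classified by an element of $H^3_{gp}(\widetilde A_n; U(1))$, and it suffices to check that these two elements coincide. Whenever $\widetilde A_n$ is superperfect we moreover have $H^1_{gp}(\widetilde A_n; U(1)) = H^2_{gp}(\widetilde A_n; U(1)) = 0$, so Corollary \ref{Cor:vanishing}(4) upgrades the resulting equivalence to one that is unique up to unique isomorphism; the low cases (notably $n=4$, where $\widetilde A_n$ is only known to have vanishing Schur multiplier) must be treated separately through the weak categorical Schur cover.

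First I would identify the class of $\mathscr A_n[e]$. Since $\mathscr A_\infty = \mG_{\alpha_{uni}}$ is the categorical Schur cover of the superperfect group $\widetilde A_\infty$, its class $[\alpha_{uni}] \in H^3_{gp}(\widetilde A_\infty; \pi_3(\bbS^0))$ corresponds to the identity under the universal coefficient isomorphism $H^3_{gp}(\widetilde A_\infty; \pi_3\bbS^0) \cong Hom(H_3(\widetilde A_\infty), \pi_3\bbS^0)$. Pushing the center forward along $e$ and using that $U(1)$ is injective, the class of $\mathscr A_\infty[e]$ in $H^3_{gp}(\widetilde A_\infty; U(1)) \cong Hom(H_3(\widetilde A_\infty), U(1))$ is the homomorphism $\pi_3\bbS^0 \xrightarrow{e} \bbQ/\bbZ \hookrightarrow U(1)$. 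By definition $\mathscr A_n = \mathscr A_\infty|_{\widetilde A_n}$, and restriction along $\widetilde A_n \hookrightarrow \widetilde A_\infty$ commutes with the pushforward $(-)[e]$; hence the class of $\mathscr A_n[e]$ corresponds to $e \circ b_3 : H_3(\widetilde A_n) \to U(1)$.

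Next I would identify the class of $String(n)|_{\widetilde A_n}$. The string extension is classified by the preferred generator $\xi$ of $H^3_{gp}(Spin(n); U(1)) \cong H^4(BSpin(n); \bbZ)$, so its restriction is $\widetilde\varrho_n^*\xi$. Since $\widetilde\varrho_n$ is the restriction of $\widetilde\varrho_\infty$ along $\widetilde A_n \hookrightarrow \widetilde A_\infty$ (Theorem \ref{thm:BPQ}) and, in the stable range, the generator of $H^4(BSpin; \bbZ)$ restricts to that of $H^4(BSpin(n); \bbZ)$, this class is the restriction to $\widetilde A_n$ of the stable class $\widetilde\varrho_\infty^*\xi$. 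The latter is precisely what Lemma \ref{lem:hofib} computes: under $H^3_{gp}(\widetilde A_\infty; U(1)) \cong Hom(H_3(\widetilde A_\infty), U(1))$ it identifies $\widetilde\varrho_\infty^*\xi$ with the extension $[\pi_3\overline{KO}]$, that is, with the Adams $e$-invariant $e : \pi_3\bbS^0 \hookrightarrow U(1)$. Restricting to $\widetilde A_n$ then yields $e \circ b_3$, matching the previous paragraph, so the two classes agree and the equivalence follows.

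The only substantive input is Lemma \ref{lem:hofib}, which supplies the identification of the pulled-back string class with the $e$-invariant; everything else is naturality bookkeeping. The main obstacle is therefore conceptual rather than computational: one must verify that the stable-level identity of Lemma \ref{lem:hofib} descends compatibly to each finite $\widetilde A_n$ through $b_3$, checking in particular that restriction along $\widetilde A_n \hookrightarrow \widetilde A_\infty$ commutes both with the center-pushforward $(-)[e]$ and with the universal coefficient isomorphisms, that $\widetilde\varrho_n$ really is the restriction of $\widetilde\varrho_\infty$ so the finite-level class is the restriction of the stable one, and that the uniqueness clause is read off from Corollary \ref{Cor:vanishing}(4) in the superperfect range, with the small-$n$ exceptions argued by hand.
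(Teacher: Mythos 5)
Your argument is correct and is essentially the paper's own proof: both reduce the statement to comparing the two classifying classes in $H^3_{gp}(\widetilde A_n;U(1))$ via the universal coefficient theorem and the isomorphism $Ext(\pi,\bbZ)\cong Hom(\pi,\bbQ/\bbZ)$, with Lemma \ref{lem:hofib} supplying the only substantive input, namely the identification of the pulled-back string class with the Adams $e$-invariant. The paper packages the naturality bookkeeping into a single commuting diagram rather than tracing the two classes separately, but the content is the same.
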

\begin{proof}
This follows from the commutativity of the diagram
  \begin{center}
    \includegraphics[scale=1.2]{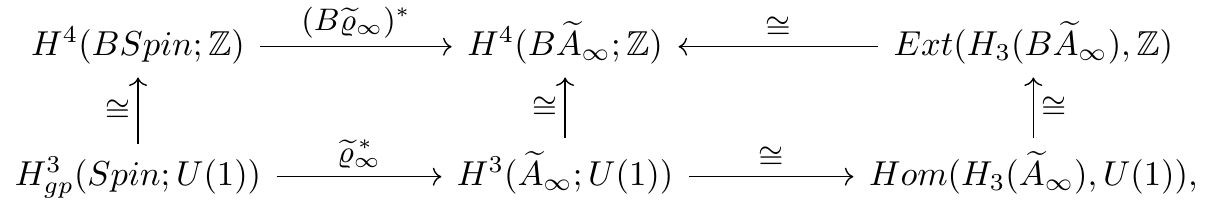}
  \end{center}  
  where the 
  horizontal isomorphisms on the right are given by the universal coefficient
  theorem, and the right-most vertical isomorphism is
  \eqref{eq:ext-hom}. 
  The left two vertical isomorphisms come from the long exact
  sequence associated to the short exact sequence of coefficients
  $$\bbZ\longrightarrow\bbQ\longrightarrow \bbQ/\bbZ.$$
\end{proof}
One interpretation of the isomorphism \eqref{eq:ext-hom} uses the fact that
the circle group $U(1)$ is a classifying space for $\bbZ$.
So, the central extensions of $\pi$ by $\bbZ$ are classified by
homotopy classes of group homomorphisms from $\pi$ to $U(1)$, and we
have
$$K(\bbZ,4)\,=\,B^3U(1).$$
Applying the construction $B(-)^+$ to the categorical central
extensions of this section, we obtain the map of exact triangles
  \[
    \includegraphics[scale=1.2]{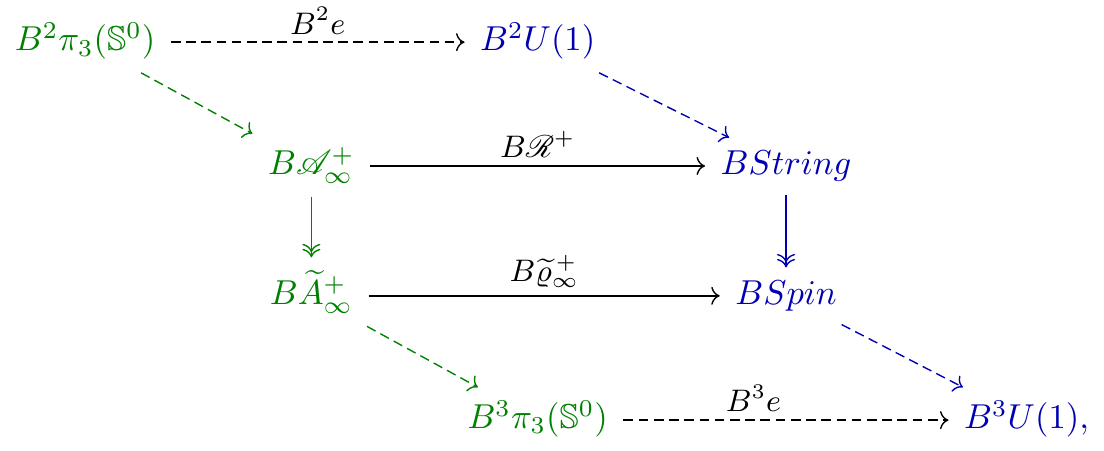}
  \]
  whose middle square adds another floor to the map of Whitehead
  towers in Theorem \ref{thm:BPQ}. 
  In particular, 
  $$
    H_4(B\mathscr A_\infty;\bbZ) \:\cong\:
    \pi_4(\bbS^0)\: = 0.
  $$
\begin{Lem}\label{lem:tetrahedral}
  The canonical inclusion of $\widetilde A_4$ in $\widetilde A_\infty$
  induces an isomorphism in degree three homology, sending the
  fundamental class of the tetrahedral spherical 3-form to the second
  Hopf map $\nu\negmedspace:{\bbS^7}\longrightarrow\bbS^4$,
  \[
    \includegraphics[scale=1.2]{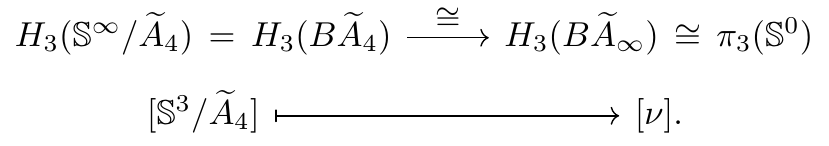}
  \]
\end{Lem}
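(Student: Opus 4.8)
The plan is to show that source and target are both cyclic of order $24$, and then to pin down the image of the canonical generator as $\pm\nu$; since $\nu$ generates $\pi_3(\bbS^0)\cong\boldsymbol\mu_{24}$, the isomorphism assertion will follow from the identification of generators. For the orders: $\widetilde A_4\cong 2T$ is a finite subgroup of $\bbS^3$, so the periodicity recalled in Section~\ref{sec:Platonic} gives $H_3(\widetilde A_4)\cong\boldsymbol\mu_{24}$, and the spectral sequence of the platonic fibration $\bbS^3/2T\hookrightarrow B\widetilde A_4\to\bbH P^\infty$ exhibits this group as generated by the image of the fundamental class $[\bbS^3/2T]$ under the fibre inclusion. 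On the target, the Barratt--Priddy--Quillen identification gives $H_3(\widetilde A_\infty)\cong\pi_3(\bbS^0)\cong\boldsymbol\mu_{24}$, generated by $\nu$. Thus it suffices to compute $b_3[\bbS^3/2T]\in\pi_3(\bbS^0)$ and show it is $\pm\nu$.

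To realise the image geometrically, I would note that the left--invariant framing of $\bbS^3=SU(2)$ is invariant under left translation by $2T$ and hence descends to a framing on the spherical space form $\bbS^3/2T$; under Pontryagin--Thom the element $b_3[\bbS^3/2T]\in\pi_3(\bbS^0)=\Omega_3^{fr}$ is then represented by $\bbS^3/2T$ equipped with this descended framing (this is the content encoded by the Hopf-fibration square of the statement, the fibre of $\bbS^7/2T\to\bbS^4=\bbH P^1$ being exactly $\bbS^3/2T$). To evaluate it I would use that the Adams $e$-invariant is injective on $\pi_3(\bbS^0)$ with $e(\nu)=\pm\tfrac1{24}$, together with the geometric formula of Definition~\ref{def:e}: in the case $k=1$, which is odd, one has $e(M)=\tfrac12\widehat A(B)$ for any spin coboundary $B$ with $\partial B=M$. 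Taking $B$ to be the $E_6$-plumbing, whose boundary is the tetrahedral space form, reduces the problem to the $\widehat A$/eta-defect of $B$, which should evaluate to $\pm\tfrac1{12}$ and hence give $e(b_3[\bbS^3/2T])=\pm\tfrac1{24}=e(\nu)$; injectivity of $e$ then forces $b_3[\bbS^3/2T]=\pm\nu$, and a compatible orientation of $\bbS^3/2T$ removes the sign.

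A second route, more in line with the machinery already set up, runs the comparison through the String $2$-group. The proposition of Section~\ref{sec:Platonic} (applied to $G=2T$) identifies $String(3)\arrowvert_{2T}$ with $\mG_{uni}[i]$, whose classifying homomorphism in $H^3_{gp}(2T;U(1))\cong Hom(\boldsymbol\mu_{24},U(1))$ is the standard inclusion $i$; Corollary~\ref{cor:A_n->String} identifies $String\arrowvert_{\widetilde A_4}$ with $\mathscr A_4[e]$, whose class is $e\circ b_3$; and Lemma~\ref{lem:hofib} interprets the restriction of the preferred generator of $H^4(BSpin)$ in terms of $e$. Matching the two classifying homomorphisms and using injectivity of $i$ then yields injectivity, hence bijectivity, of $b_3$.

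The main obstacle is to produce the image as $\nu$ itself rather than merely as some generator, i.e. to make the numerical coincidence $|2T|=24=|\pi_3(\bbS^0)|$ do genuine work. In the first approach this is precisely the evaluation of the $\widehat A$/eta-defect of the chosen spin coboundary (equivalently, the honest tracking of the quaternionic Hopf map $\nu\colon\bbS^7\to\bbS^4$ through the diagram). In the second approach the delicate point is the normalisation of the restriction of $\tfrac{p_1}{2}$ along $2T\hookrightarrow Spin(3)\hookrightarrow Spin$: the vector representation here is the $3$-dimensional (tetrahedral) one, for which $\tfrac{p_1}{2}$ is an integral multiple of the basic class $v$ of Section~\ref{sec:Platonic}, and this multiplicity must be computed exactly before the two classifying homomorphisms can be matched. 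I expect this normalisation, which is invisible to the coarse order count, to be the crux of the argument.
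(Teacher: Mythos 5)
Your second route is, in skeleton, the paper's actual argument: the proof compares the exact sequence $H_4(\bbH P^\infty)\to H_3(\bbS^3/2T)\to H_3(\widetilde A_4)\to 0$ coming from the platonic fibration with the sequence $H_4(BSpin)\to \pi_3(\overline{KO})\to\pi_3(\bbS^0)\to 0$ extracted from Lemma \ref{lem:hofib}, the comparison being induced by the inclusion $\widetilde A_4\subset Spin(3)\subset Spin(4)\subset Spin$ that covers the nontrivial summand of the permutation representation; the single substantive input is that this (non-standard, diagonal-in-$Spin(4)$) copy of $Spin(3)$ maps isomorphically on $H_3$ into $Spin$, whence all vertical arrows are isomorphisms and $[\bbS^3/2T]$ hits the generator with $e$-invariant $\tfrac1{24}$. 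You have correctly isolated exactly this normalisation --- the multiplicity of $\tfrac{p_1}{2}$ of the three-dimensional representation against the basic class of $\bbH P^\infty$ --- as the crux. But you then stop: both of your routes terminate in an explicitly unperformed computation, and that computation is where all the content of the lemma lives. A proof that says ``this multiplicity must be computed exactly'' and ``the defect should evaluate to $\pm\tfrac1{12}$'' has not proved the statement; the order count $|2T|=24=|\pi_3(\bbS^0)|$ by itself only shows that $b_3$ is a map between two cyclic groups of order $24$, which is compatible with $b_3$ being multiplication by any residue whatsoever. The delicacy is real, not formal: the three-dimensional representation of $Spin(3)=SU(2)$ is the adjoint one, with $p_1=-4c_2$, so the naive pullback of $\tfrac{p_1}{2}$ is \emph{twice} the generator of $H^4(\bbH P^\infty;\bbZ)$, and one must explain why this factor of $2$ does not cut the image of $b_3$ down to the index-two subgroup. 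Any complete proof has to confront and resolve exactly this point, and yours does not.

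Your first route has an additional unjustified step. The identification of $b_3[\bbS^3/2T]\in H_3(\widetilde A_\infty)\cong\pi_3(\bbS^0)$ with the Pontryagin--Thom class of $\bbS^3/2T$ carrying the descended left-invariant framing is not automatic: a class in $H_3(B\widetilde A_4)$ carries no preferred framing, and the isomorphism $H_3(\widetilde A_\infty)\cong\pi_3(\bbS^0)$ is produced by the plus construction and Hurewicz on the $2$-connected cover, not by a framed-bordism construction. Turning your geometric picture into a proof would require exhibiting a specific stable framing on $\bbS^3/2T$ whose Pontryagin--Thom class agrees with the image of the fundamental class under the composite $\bbS^3/2T\to B\widetilde A_4\to B\widetilde A_\infty^+$, and then the $\widehat A$-computation for the $E_6$-plumbing must be carried out relative to \emph{that} framing (the answer of the formula in Definition \ref{def:e} depends on the boundary framing, so ``should evaluate to $\pm\tfrac1{12}$'' cannot be waved through). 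In short: the architecture of your proposal is sound and your second route matches the paper's, but the decisive normalisation --- which you yourself flag as the crux --- is missing from both routes, so the proposal does not yet constitute a proof.
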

\begin{proof}
  Applying the plus construction (with respect to $\widetilde
  A_\infty$) to the fibration
  \[
    \includegraphics[scale=1.2]{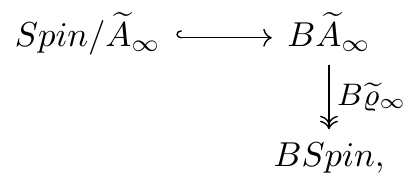}
  \]
  we obtain the identification
  \[
    \left(Spin/\widetilde
      A_\infty\right)^+\,=\,hofib\left(B\widetilde\varrho_\infty^{\,+}\right), 
  \]
  see \cite[3.D.3(2)]{DrorFarjoun96}. From
  the proof of Lemma \ref{lem:hofib}, we therefore have the short exact sequence
  \[
    \includegraphics[scale=1.2]{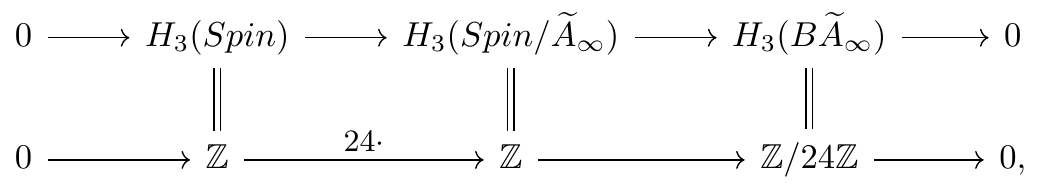}
  \]
  whose first map can be identified with the differential 
  \[
    \includegraphics[scale=1.2]{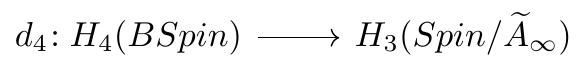}
  \]
  in the Leray-Serre spectral sequence for
  $B\widetilde\varrho_\infty$. This can be compared to the scenario
  for the platonic 2-groups. 
  In particular, we have the commuting diagram
  \[
    \includegraphics[scale=1.2]{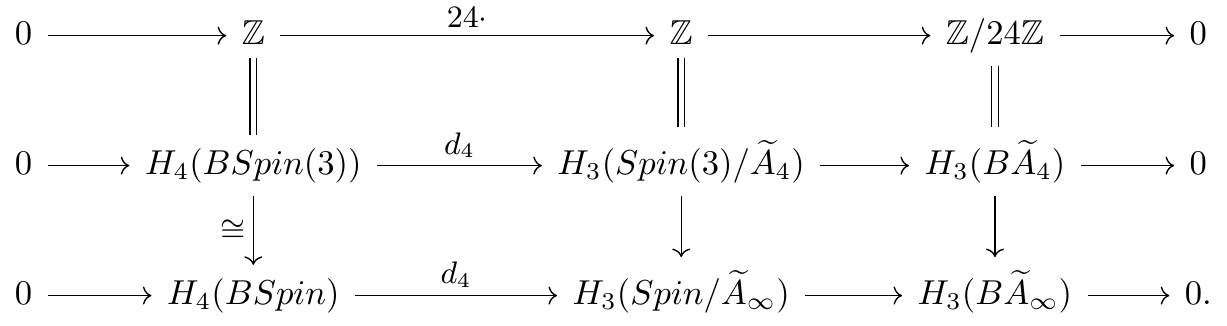}
  \]
  Here we are using a non-standard inclusion of $Spin(3)$ inside
  $Spin(4)\subset Spin$, covering the orthogonal complement of the
  trivial summand of the permutation representation. This map still
  gives an isomorphism in $H_3$, implying that all the vertival arrows
  are isomorphisms.
  It follows that the generator of $$H_3(B\widetilde
  A_\infty)\,\cong\,\pi_3(\bbS^0)$$ with $e$-invariant $\frac1{24}$ is
  the image of the fundamental class of $Spin(3)/\widetilde A_4$
  under its inclusion in $B\widetilde A_\infty$.
\end{proof}

\begin{Cor}
  The fourth alternating 2-group, $\mathscr A_4$, is the weak
  categorical Schur cover of the binary 
  tetrahedral group, while $$\mathscr A_3\,\simeq\,\mathcal C_6$$
  is the weak categorical Schur cover of the cyclic group on six elements.  
\end{Cor}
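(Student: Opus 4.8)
The plan is to treat both $2$-groups uniformly as restrictions $\mathscr A_n = \mathscr A_\infty\arrowvert_{\widetilde A_n}$ and to identify them by pinning down the classifying homomorphism $b_3$. First I would record that $\widetilde A_3$ and $\widetilde A_4$ are both finite subgroups of $\bbS^3 = Spin(3)$: the spin cover of $A_3 = \bbZ/3$ is cyclic of order six, $\widetilde A_3 \cong \boldsymbol\mu_6$ (the extension by $\boldsymbol\mu_2$ splits since $2$ and $3$ are coprime), while $\widetilde A_4$ is the binary tetrahedral group $2T$ of order $24$. By the periodicity results of Section \ref{sec:Platonic}, both have vanishing Schur multiplier, with $H_3(\widetilde A_3) \cong \boldsymbol\mu_6$ and $H_3(\widetilde A_4) \cong \boldsymbol\mu_{24}$. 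Hence each possesses a weak categorical Schur cover: the cyclic cover $\mathcal C_6$ of Section \ref{sec:Abelian} for $\widetilde A_3$, and the corresponding platonic $2$-group for $\widetilde A_4$.

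Next I would show that $\mathscr A_n$ is classified by $b_3$. Since $\widetilde A_\infty$ is superperfect, $\mathscr A_\infty$ corresponds under the universal coefficient theorem to $\mathrm{id}_{H_3(\widetilde A_\infty)}$. Restriction along $\iota\negmedspace: \widetilde A_n \hookrightarrow \widetilde A_\infty$ is the pullback $\iota^*$ in degree-three cohomology, and because $H_2(\widetilde A_n) = 0$ for $n = 3,4$ the $Ext$ term in the universal coefficient theorem vanishes; naturality then identifies $\iota^*[\alpha_{uni}]$ with $\iota_* = b_3\negmedspace: H_3(\widetilde A_n) \to H_3(\widetilde A_\infty) \cong \pi_3(\bbS^0) = \boldsymbol\mu_{24}$. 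This reproduces the classification of the preceding corollary without requiring $\widetilde A_n$ itself to be superperfect. For $n = 4$ the identification is now immediate from Lemma \ref{lem:tetrahedral}, which asserts precisely that $\iota_*$, hence $b_3$, is an isomorphism $\boldsymbol\mu_{24} \xrightarrow{\sim} \boldsymbol\mu_{24}$. An extension whose classifying homomorphism is an isomorphism of its center is equivalent in $\mE\!xt(\widetilde A_4)$, via $b_3$ on centers, to the one classified by the identity, so $\mathscr A_4$ is the weak categorical Schur cover of $2T$.

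The case $n = 3$ is the delicate one, and is where I expect the main obstacle to lie: here $b_3\negmedspace: \boldsymbol\mu_6 \to \boldsymbol\mu_{24}$ cannot be an isomorphism, and $\mathscr A_3$ nominally carries the larger center $\boldsymbol\mu_{24}$, so the asserted equivalence cannot live in $\mE\!xt(\widetilde A_3)$ but must be read after reducing the center to the image of $b_3$. The crux is to prove that $b_3$ is injective, with image the order-$6$ subgroup of $\boldsymbol\mu_{24}$; granting this, $\mathscr A_3 \simeq \mathcal C_6[b_3]$ is the push-forward of the cyclic cover along $\boldsymbol\mu_6 \hookrightarrow \boldsymbol\mu_{24}$, and restricting the center to $b_3(\boldsymbol\mu_6)$ returns $\mathcal C_6$. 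To establish injectivity I would compare two descriptions of $String(3)\arrowvert_{\widetilde A_3}$: Corollary \ref{cor:A_n->String} gives $String(3)\arrowvert_{\widetilde A_3} \simeq \mathscr A_3[e]$ with $e\negmedspace: \boldsymbol\mu_{24} \hookrightarrow U(1)$ the Adams $e$-invariant, classified by $e \circ b_3$; and since $\widetilde A_3 \cong \boldsymbol\mu_6$ is cyclic, the realization of the platonic covers inside $String(3)$ from Section \ref{sec:Platonic}, together with Lemma \ref{lem:cyclic_Schur}, gives $String(3)\arrowvert_{\boldsymbol\mu_6} \simeq \mathcal C_6[i]$ with $i\negmedspace: \boldsymbol\mu_6 \hookrightarrow U(1)$ the standard inclusion, classified by $i$. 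Comparing classifying homomorphisms in $Hom(H_3(\boldsymbol\mu_6), U(1))$ forces $e \circ b_3 = i$; as $e$ and $i$ are the standard inclusions, $b_3$ must be the standard injection $\boldsymbol\mu_6 \hookrightarrow \boldsymbol\mu_{24}$. This completes the argument and exhibits $\mathscr A_3 \simeq \mathcal C_6$ as the weak categorical Schur cover of the cyclic group on six elements.
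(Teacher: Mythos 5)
Your argument is correct, and it is worth pointing out that the paper offers no proof of this corollary at all: it is stated as an immediate consequence of Lemma \ref{lem:tetrahedral}, so your write-up supplies exactly the reasoning the authors leave implicit. For $n=4$ your route is the intended one: $\mathscr A_n=\mathscr A_\infty\arrowvert_{\widetilde A_n}$ is classified by $b_3$ by naturality of the universal coefficient theorem (using $H_2(\widetilde A_n)=0$), and Lemma \ref{lem:tetrahedral} says $b_3$ is an isomorphism for $n=4$, so $\mathscr A_4$ is the weak categorical Schur cover of $2T$ up to the relabelling of the center by $b_3$. For $n=3$ the paper is silent, and your comparison of the two descriptions of $String(3)\arrowvert_{\boldsymbol\mu_6}$ --- as $\mathscr A_3[e]$ via Corollary \ref{cor:A_n->String}, classified by $e\circ b_3$, and as $\mathcal C_6[i]$ via the Proposition of Section \ref{sec:Platonic} together with Lemma \ref{lem:cyclic_Schur}, classified by the injection $i$ --- does establish that $b_3$ is injective, since $e$ is injective on $\boldsymbol\mu_{24}$. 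A slightly more economical alternative, entirely internal to the homological picture, is to factor $b_3$ through $H_3(\widetilde A_4)$: the map $\bbS^3/\boldsymbol\mu_6\to\bbS^3/2T$ is a degree-$4$ covering of oriented $3$-manifolds, so on the fundamental classes generating $H_3(B\boldsymbol\mu_6)\cong\boldsymbol\mu_6$ and $H_3(B(2T))\cong\boldsymbol\mu_{24}$ it is multiplication by $4$, hence injective onto the order-$6$ subgroup, and Lemma \ref{lem:tetrahedral} finishes the job. Finally, the caveat you raise is genuine and not a defect of your argument but of the statement: $\mathscr A_3$ as defined carries center $\pi_3(\bbS^0)=\boldsymbol\mu_{24}$ while $\mathcal C_6$ has center $\boldsymbol\mu_6$, so the displayed equivalence must be read as $\mathscr A_3\simeq\mathcal C_6[b_3]$, i.e.\ as an identification after pushing the center of $\mathcal C_6$ forward along the injection $b_3$; your reading is the one under which the corollary is literally true.
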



\section{Explicit constructions}
\label{sec:Explicit_constructions}
This section recalls the construction of the String 2-groups given in
\cite{Wockel11} and \cite{WagemannWockel15}. We only discuss the
restriction to our finite subgroups. 
Following \cite{BroeckertomDieck95}, we identify the maximal torus of $Spin(n)$ with
\[
  T\,=\,\bbR^{\lfloor\frac n2\rfloor}/\bbZ^{\lfloor\frac n2\rfloor}_{ev},
\]
where
\[
  \mathfrak t\,=\,\bbR^{\lfloor\frac n2\rfloor}
\]
is the Lie algebra and
\[
\Lambda\!^\vee
\,=\, \bbZ^{\lfloor\frac n2\rfloor}_{ev
}\,=\,\{m\in\bbZ^{\lfloor\frac n2\rfloor}\mid\langle m,m\rangle\}
\]
is the coweight lattice.
The basic bilinear form $\langle -,-\rangle$ on $\mathfrak{spin(n)}$ is then the multiple
of the Killing form that restricts to the standard scalar product on $\bbR^{\lfloor\frac n2\rfloor}$. 
The Cartan three form is the invariant three form $\nu$ on $Spin(n)$ with
\[
  \nu_1(\xi,\zeta,\eta)\,=\,\langle[\xi,\zeta],\eta\rangle.
\]
Restricted to $\bbS^3=Spin(3)$, we have
\[
\nu_1(\xi,\zeta,\eta)\,=\,\langle\xi\times\zeta,\eta\rangle\,=\,\det(\xi,\zeta,\eta
).
\]
So, $\nu$ is the volume form.

Let now $G\subset Spin(n)$ be a finite subgroup, and let
\[
  \bbZ\longleftarrow Bar_\bullet G
\]
be the bar resolution,  
\[
  Bar_kG\,=\,\bbZ[G]G^k.
\]
Let $C_\bullet (Spin(n))$ be the singular chain complex of $Spin(n)$. Since $Spin(n)$ is 2-connected, and $Bar_\bullet G$ is free, we may choose maps
\[
  f_i\negmedspace : Bar_iG\longrightarrow C_i(Spin(n)),
\]
for $0\leq i\leq 3$, such that $f_0$ maps $g()$ to the 0-simplex $g$ in $Spin(n)$, and the $f_i$ fit together to form a map of truncuated chain complexes of $\bbZ[G]$-modules. Here $G$ acts on the simplices in $Spin(n)$ by left translation.

Explicitly, a choice of $f$ amounts to,
for each $g\in G$, a path $\gamma_g$ from $1$ to $g$, for each
pair $(g|h)$ of elements of $G$, a 2-simplex $\Delta_{g,h}$ bounding
\[
  \gamma_g-\gamma_{gh}+g\gamma_h, 
\]
and for each triple $(g|h|k)$, a 3-simplex $W_{g,h,k}$ bounding
\[
  -\Delta_{g,h} +\Delta_{g,hk}-\Delta_{gh,k}+g\Delta_{h,k}.
\]
\begin{Def}
  For a fixed choice of $f_\bullet$, let
  \[
    \alpha\negmedspace : G^3\longrightarrow \bbR/\bbZ
  \]
  be the 3-cocycle
  \[
  \alpha(g|h|k)\,=
  \,\frac1{2\pi^2}\int_{W_{g,h,k}}\nu\quad \mod \bbZ.
  \]
\end{Def}
\begin{Lem}
  Different choices of $f_\bullet$ yield cohomologous choices of $\alpha$.
\end{Lem}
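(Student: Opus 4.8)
The plan is to recognize $\alpha$ as the pullback of a fixed \emph{integral} singular cocycle under the chain map $f_\bullet$, and then to exploit the fact that any two such chain maps are homotopic in the relevant range of degrees. Write $\beta\in C^3(Spin(n);\bbR)$ for the singular $3$-cochain $\sigma\mapsto\frac1{2\pi^2}\int_\sigma\nu$. Since $\nu$ is closed, $\beta$ is a cocycle, and by construction $\alpha(g|h|k)=\beta\bigl(f_3([g|h|k])\bigr)\bmod\bbZ$, where $f_3([g|h|k])=W_{g,h,k}$. The key normalization, which I would record first, is that $\frac1{2\pi^2}[\nu]$ is an integral class: $\int_z\beta\in\bbZ$ for every integral $3$-cycle $z$ in $Spin(n)$. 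This is the defining property of the basic inner product; on $Spin(3)=\bbS^3$ it is checked directly, since $\nu$ is there the volume form and $\bbS^3$ has volume $2\pi^2$, so $\int_{\bbS^3}\beta=1$.

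Next I would compare two choices $f_\bullet$ and $f'_\bullet$. Both are truncated $\bbZ[G]$-linear chain maps from the free resolution $Bar_\bullet G\to\bbZ$ to $C_\bullet(Spin(n))$, augmented through $H_0$. Because $Spin(n)$ is $2$-connected, $\widetilde H_i(Spin(n))=0$ for $i\le 2$, so the comparison theorem of homological algebra supplies $\bbZ[G]$-equivariant maps $s_1\colon Bar_1G\to C_2(Spin(n))$ and $s_2\colon Bar_2G\to C_3(Spin(n))$ (with $s_0=0$, since $f_0=f'_0$ are both forced to send the generator to the point $1$) satisfying
\[
  f_i-f'_i=\partial s_i+s_{i-1}\partial\qquad(i=1,2).
\]
One cannot continue to degree $3$, because $H_3(Spin(n))\cong\bbZ$ obstructs it; but only $s_1$ and $s_2$ are needed.

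The final step is a direct computation. Put $c=f_3-f'_3-s_2\partial_3$. Using $\partial f_3=f_2\partial$, $\partial f'_3=f'_2\partial$ and the degree-$2$ homotopy relation rearranged as $\partial_3 s_2=f_2-f'_2-s_1\partial_2$, one finds $\partial_3 c=s_1\partial_2\partial_3=0$, so $c([g|h|k])$ is an integral $3$-cycle for every triple. Pairing with $\beta$ gives
\[
  \alpha(g|h|k)-\alpha'(g|h|k)=\beta\bigl(c([g|h|k])\bigr)+\beta\bigl(s_2\partial_3([g|h|k])\bigr).
\]
The second summand is $(\delta\lambda)(g|h|k)$ for the real group $2$-cochain $\lambda(g|h):=\beta(s_2([g|h]))$, hence a coboundary. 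The first summand is $\int_{c}\beta\in\bbZ$ by the integrality above, hence $\equiv0\bmod\bbZ$. Therefore $\alpha-\alpha'\equiv\delta\lambda\pmod\bbZ$, so $\alpha$ and $\alpha'$ are cohomologous in $H^3(G;\bbR/\bbZ)$.

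The hard part, or at least the conceptual crux, is the integrality of $\frac1{2\pi^2}\nu$: the chain homotopy alone only shows that the two cocycles agree in $H^3(G;\bbR)$, and it is precisely the integral normalization of the basic bilinear form that upgrades this to an equality of $\bbR/\bbZ$-classes. The second point to handle with care is that $Spin(n)$ is acyclic only through degree $2$, so no full chain homotopy exists; the mechanism that rescues the argument is that the leftover degree-$3$ discrepancy $c$ is a genuine cycle, whose $\beta$-period is therefore an integer.
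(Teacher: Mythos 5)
Your proposal is correct and follows essentially the same route as the paper: both use the $2$-connectedness of $Spin(n)$ to build a chain homotopy through degree $2$, observe that the leftover degree-$3$ discrepancy is a cycle whose $\frac1{2\pi^2}\nu$-period is an integer, and identify the remaining term as the coboundary of the group $2$-cochain obtained by integrating $\nu$ over the homotopy. Your write-up merely makes explicit the integrality normalization and the verification that the discrepancy is a cycle, which the paper leaves implicit.
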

\begin{proof}
Let $f_\bullet'$ be a second choice for $f_\bullet$, and let $\alpha'$ be the resulting 3-cocycle. Employing again the 2-connectedness of $Spin(n)$, we obtain a chain homotopy
\[
  \includegraphics[scale=1.2]{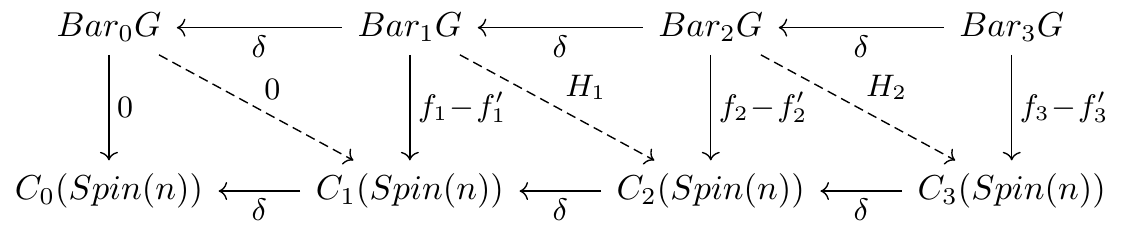}
\]
relating $f_\bullet$ and $f'_\bullet$ up to degree 2 and such that
\[
  f_3-f'_3-H_2\circ\delta
\]
takes values in the 3-cycles $Z_3(Spin(n))$.  
Letting $\beta$ be the 2-cocycle
\[
  \beta(g|h)\,=\,\frac1{2\pi^2}\int_{H_2(g|h)}\nu\quad\mod\bbZ, 
\]
it follows that
\[
  \alpha-\alpha'\,=\,\delta^*\beta.
\]  
\end{proof}
\begin{Rem}
  In \cite {FeminaGalvesNetoSreafico}, Femina, Galves, Neto and Sreafico
  describe the fundamental domain 
  of the action of $2T$ on the three sphere as
  an octahedron (the join of two geodesic segments). This yields a
  specific description of the fundamental class of $\bbS^3/2T$.
  It would be interesting to identify this class with an explicit group
  cocycle or to give a more direct relationship with the second Hopf
  map. 
\end{Rem}


\begin{bibdiv}

  \begin{biblist}
    \bibselect{platonic}
  \end{biblist}
\end{bibdiv}

\end{document}